\definecolor{darkgreen}{rgb}{0.0, 0.5, 0.0}
\newcommand{\llangle}{\langle\hspace{-2.5pt}\langle}
\newcommand{\rrangle}{\rangle\hspace{-2.5pt}\rangle}
\newcommand{\dslash}{/\hspace{-2.5pt}/}
\newtheorem{theorem}{Theorem}
\newtheorem{corollary}[theorem]{Corollary}
\newtheorem{lemma}[theorem]{Lemma}
\newtheorem{proposition}[theorem]{Proposition}
\theoremstyle{definition}
\newtheorem{problem}[theorem]{Problem}
\newtheorem{example}[theorem]{Example}
\newtheorem{definition}[theorem]{Definition}
\title{{Towards a bilipschitz invariant theory}}
\author{
Jameson~Cahill\footnote{Department of Mathematics and Statistics, University of North Carolina Wilmington, Wilmington, NC} 
\qquad
Joseph~W.~Iverson\footnote{Department of Mathematics, Iowa State University, Ames, IA} 
\qquad
Dustin~G.~Mixon\footnote{Department of Mathematics, The Ohio State University, Columbus, OH} \footnote{Translational Data Analytics Institute, The Ohio State University, Columbus, OH}
}
\date{}
\begin{document}
\maketitle

\begin{abstract}
Consider the quotient of a Hilbert space by a subgroup of its {automorphisms}.
We study whether this orbit space can be embedded into a Hilbert space by a bilipschitz map, and we identify constraints on such embeddings.
\end{abstract}

\section{Introduction}

Given a Hilbert space and a subgroup of {its automorphisms (isometric linear bijections)}, we wish to embed the {corresponding} orbit space into a Hilbert space by a bilipschitz map.
We are motivated by the analysis of data that resides in such an orbit space.
For example:

\medskip
\noindent
\textbf{Graphs.}
The adjacency matrix of an unlabeled graph on $n$ vertices is a member of $\mathbb{R}^{n\times n}$, but this matrix representation is only unique up to the conjugation action of $S_n$.

\medskip
\noindent
\textbf{Point clouds.}
A point cloud consisting of $n$ vectors in $\mathbb{R}^d$ can be represented as a member of $\mathbb{R}^{d\times n}$ up to the right action of $S_n$ (i.e., column permutation).

\medskip
\noindent
\textbf{Landmarks.}
Landmarks of a biological specimen can be represented in $\mathbb{R}^{3\times n}$, with the left action of $SO(3)$ giving different representations of the same specimen.

\medskip
\noindent
\textbf{Audio signals.}
An audio signal can be modeled as a real-valued function in $L^2(\mathbb{R})$, but different time delays of the same signal arise from an orthogonal action of $\mathbb{R}$.

\medskip

In each of the settings above, data is represented in a Hilbert space $V$, but each data point is naturally identified with the other members of its orbit {under a group $G$ of automorphisms of $V$}.
To correctly measure distance, it is more appropriate to treat the data as residing in the orbit space {$V/G:=\{G\cdot x:x\in V\}$}.
However, most of the existing data processing algorithms were designed for data in a Hilbert space, not an orbit space.
{Luckily, as the following section demonstrates,} a bilipschitz embedding of the orbit space into a Hilbert space will enable one to use such algorithms.
{This motivates the study of whether a given orbit space can be embedded into a Hilbert space by a bilipschitz map.
In this paper, we identify several necessary and sufficient conditions for such embeddings, and in some cases, we explicitly construct bilipschitz embeddings that minimally distort the quotient distance.}

Section~\ref{sec.metric quotient} defines the \textit{metric quotient} $V\dslash G$, whose points are the topological closures of $G$-orbits in $V$.
This matches the intuition that salient features are continuous functions of the data; also, while $V/G$ might only be a pseudometric space, $V\dslash G$ is always a metric space.
Section~\ref{sec:homogeneous extension} shows how a bilipschitz invariant on the sphere can be extended to a bilipschitz invariant on the entire space.
In Section~\ref{sec.nondiff bilipschitz}, we show that every bilipschitz invariant map is not differentiable.
This represents a significant departure from classical invariant theory~\cite{Hilbert:93,Olver:99,Sturmfels:08,BandeiraBKPWW:17}, which studies polynomial (hence differentiable) invariants.
Section~\ref{sec.bilipschitz poly} then uses the extension from Section~\ref{sec:homogeneous extension} to construct bilipschitz invariants for finite $G\leq O(d)$ from bilipschitz polynomial invariants on the sphere, which in turn exist precisely when $G$ acts freely on the sphere (hence rarely).
In Section~\ref{sec.minimum distortion}, we use the semidefinite program from \cite{LinialLR:95} to show that some of our extensions of polynomial invariants deliver the minimum possible distortion of $V\dslash G$ into a Hilbert space.
Finally, Sections~\ref{sec.permutation} and~\ref{sec.translation} estimate the Euclidean distortion of infinite-dimensional Hilbert spaces modulo permutation and translation groups, respectively.

\subsection{Related work}

The literature considers many ways of processing data in an orbit space.
In what follows, we discuss some relevant examples, which we broadly categorize according to their inspiration.

\subsubsection{Deep learning}

{First, we discuss how modern tools in machine learning currently process data in orbit spaces.
(In short, while there has been some effort to construct Lipschitz feature maps, there has been little work to date on bilipschitz feature maps.)}

Much of today's excitement in machine learning comes on the heels of \textit{deep neural networks} achieving super-human performance in various tasks.
For data with translation invariance, such as images and audio signals, it is empirically beneficial for the early layers of a neural network to exhibit convolutional structure~\cite{SimardSP:03,CiresanMGS:10,KrizhevskySH:12,GuEtal:18}.
Since convolutions are equivariant to translation, one might think of these early layers as isolating translation-equivariant features of the data.
The real-world success of \textit{convolutional neural networks} inspired the development of more principled versions of this fundamental idea.
For example, Mallat introduced the \textit{scattering transform}~\cite{Mallat:12,BrunaM:11,BrunaM:13,Waldspurger:17}, which iteratively alternates between taking a wavelet decomposition and a pointwise absolute value.
The scattering transform is invariant to translation and stable to diffeomorphism, and it has since been generalized to other settings~\cite{GaoWH:19,PerlmutterGWH:19,ChewHKNPSVW:22}.

A modern goal in this vein is to develop deep learning tools to accommodate non-Euclidean data, such as data in orbit spaces.
This is the charge of \textit{geometric deep learning}~\cite{BronsteinBLSV:17,BronsteinBCV:21}.
As an example, consider the task of classifying molecules according to whether they are harmful to humans.
We can represent the molecule as a graph, with vertices representing atoms and edges representing bonds, and then we can train a \textit{graph neural network} for this classification task.
By design, graph neural networks are invariant to relabeling the vertices of the input graph.
As such, they either fail to separate certain pairs of non-isomorphic graphs, or they are slow (assuming the graph isomorphism problem is hard).
The standard message-passing graph neural networks distinguish isomorphism classes of graphs as well as the Weisfeiler--Leman test~\cite{WeisfeilerL:68,XuHLJ:19,Sato:20,HuangV:21}, while other approaches achieve better separation~\cite{MorrisRFHLRG:19,BokerLHVM:23,HordanAGD:23}.

\subsubsection{Invariant theory}

One may express any group-invariant classifier as the composition of a group-invariant feature map with a classifier on the feature domain.
The feature map is most expressive if it separates all pairs of distinct orbits.
Under mild conditions on the group (e.g., if the group is finite), Hilbert~\cite{Hilbert:93} showed that there exists a polynomial map into finite-dimensional space that separates the orbits; in fact, one may take the coordinate functions to be generators of the algebra of invariant polynomials.
Modern work in invariant theory constructs smaller separating sets of polynomials with bounds on their degrees~\cite{Dufresne:08,DerksenK:15,Domokos:17,Domokos:22}.
{(Warning:\ polynomial invariants are considered to be \textit{separating} if they separate orbits as well as the entire ring of polynomial invariants, which might fail to distinguish certain orbits if the group is not compact.)}
Such polynomial maps play a key role in both invariant and equivariant machine learning~\cite{VillarHSYB:21,VillarYHBD:23,BlumSmithV:22}.

Polynomial invariants are also used to solve the \textit{orbit recovery problem}.
For this problem, there is a known group $G$ acting linearly on a vector space $V$, and the task is to recover the orbit $G\cdot x$ of an unknown point $x\in V$ from data of the form $\{y_i=g_ix+e_i\}_{i=1}^n$, where each $g_i\in G$ and $e_i\in V$ is drawn independently at random.
(In some settings, this problem is also known as \textit{multi-reference alignment} and can be viewed as a sub-problem of \textit{cryo-electron microscopy}.)
Much of the recent work on this problem applies the \textit{method of moments}, where one uses the data to estimate the moment $m_k(x):=\mathbb{E}_{g\sim\operatorname{Unif}(G)}(gx)^{\otimes k}$ for various $k$, and then estimates the orbit $G\cdot x$ from these moments~\cite{BandeiraBKPWW:17,PerryWBRS:19,BendoryELS:22,BendoryE:22,EdidinS:23}.
Notably, the coordinates of $m_k(x)$ are $G$-invariant polynomials of $x$.

Polynomial invariants have been investigated for well over a century, and the large body of existing work makes them particularly nice to interact with in theory.
Unfortunately, they can be rather finicky in practice.
As an example, consider the \textit{Vi\`{e}te map}, which sends any tuple $\{a_i\}_{i=1}^n$ of complex numbers to the coefficients of the corresponding degree-$n$ monic polynomial $\prod_{i=1}^n(z-a_i)$.
This defines a homeomorphism $\mathbb{C}^n/S_n\to\mathbb{C}^n$ with the elementary symmetric polynomials as coordinate functions~\cite{Whitney:72}.
While the continuity of the inverse map is most cumbersome to demonstrate where the polynomial has repeated roots, the inverse map is also unstable elsewhere, perhaps most famously at \textit{Wilkinson's polynomial} $\prod_{k=1}^{20}(z-k)$~\cite{Wilkinson:84}.
For this polynomial, perturbing the coefficient of $z^{19}$ by machine precision produces a sizable error in several of the roots.
Considering we are motivated by computational applications, this suggests that we pursue a more stable family of invariants.

\subsubsection{Phase retrieval}

Consider the problem of recovering a vector $x\in\mathbb{C}^d$ from data of the form $\{y_i=|\langle a_i,x\rangle|\}_{i=1}^n$.
This is known as \textit{(generalized) phase retrieval}.
Note that the map $x\mapsto \{|\langle a_i,x\rangle|\}_{i=1}^n$ is $G$-invariant, where $G=\{\lambda I_d:\lambda\in\mathbb{C},|\lambda|=1\}$.
Over the last decade, researchers have investigated conditions under which this map separates orbits~\cite{BalanCE:06,FickusMNW:14,BodmannH:15,ConcaEHV:15,Vinzant:15} and in a stable way~\cite{BandeiraCMN:14,BalanW:15}.
In addition, there are now several algorithms to invert this invariant map using semidefinite programming~\cite{Candes:15,CandesSV:13,DemanetH:14,WaldspurgerAM:15,GrossKK:17}, combinatorics~\cite{AlexeevBFM:14,BandeiraCM:14}, and non-convex optimization~\cite{CandesLS:15,ChenC:17,SunQW:18}.

The theory that was developed to study this particular family of invariants has recently inspired the construction of new families of invariants for many more group actions.
For example, \cite{CahillCC:20,CahillCC:19} modified polynomial separating invariants for abelian groups in order to be Lipschitz, \cite{BalanHS:22} identified bilipschitz permutation invariants based on sorting, and \cite{DymG:22,AmirGARD:23} discovered semialgebraic separating invariants for a variety of groups.
Finally, \cite{CahillIMP:22,MixonP:22,MixonQ:22} introduced and studied \textit{max filtering}, which directly generalizes phase retrieval to produce easy-to-compute semialgebraic separating invariants for any closed group.
Furthermore, max filtering invariants are frequently bilipschitz in the quotient metric, thereby avoiding the stability problem exemplified by Wilkinson's polynomial.

\subsubsection{Metric embeddings}

Consider the following fundamental question:
Given metric spaces $M$ and $N$, does there exist a bilipschitz map $M\to N$, i.e., a \textit{metric embedding} of $M$ into $N$?
This is an active area of research; see \cite{Naor:15} for example.
In the cases where a metric embedding exists, one seeks metric embeddings of minimal \textit{distortion}, that is, the quotient of upper and lower Lipschitz bounds.
If $M$ is finite and $N$ is $|M|$-dimensional Euclidean space, the distortion-minimizing embeddings can be determined by semidefinite programming~\cite{LinialLR:95,LinialM:00}.
Building on ideas from~\cite{KhotN:06}, Eriksson-Bique~\cite{ErikssonBique:18} constructed a metric embedding of $\mathbb{R}^d/G$ for any finite $G\leq O(d)$ into a Euclidean space.
{(This construction was later generalized in~\cite{Zolotov:19}.)}
Furthermore, the dimension of the target space and the distortion of the map are both bounded by functions of $d$.
Note that any metric embedding $\mathbb{R}^d/G\to\mathbb{R}^n$ determines a $G$-invariant map $\mathbb{R}^d\to\mathbb{R}^n$ that separates $G$-orbits in a stable way.
To date, Eriksson-Bique's map and max filtering are the only known methods of constructing bilipschitz invariants for arbitrary finite subgroups of $O(d)$.\footnote{{In the time since the original version of this paper was posted on the arXiv, max filtering was generalized in~\cite{BalanT:23} to a larger family of bilipschitz invariants known as \textit{coorbit embeddings}.}}

\section{Bilipschitz maps and their applications}

Given a map $f\colon X\to Y$ between metric spaces $X$ and $Y$, then provided $X$ has at least two points, we may take $\alpha,\beta\in[0,\infty]$ to be the largest and smallest constants (respectively) such that
\[
\alpha\cdot d_X(x,x')
\leq d_Y\big(f(x),f(x')\big)
\leq \beta\cdot d_X(x,x')
\qquad
\forall x,x'\in X.
\]
These are the (optimal) lower and upper Lipschitz bounds, respectively.
(Notice that $\alpha$ and $\beta$ are not well defined when $X$ has fewer than two points.)
The \textbf{distortion} of $f$ is
\[
\operatorname{dist}(f):=\frac{\beta}{\alpha},
\]
with the convention that division by zero is infinite.
A map with finite upper Lipschitz bound is called \textbf{Lipschitz}, a map with positive lower Lipschitz bound is called \textbf{lower Lipschitz}, and a map with finite distortion is called \textbf{bilipschitz}.
Observe that Lipschitz functions are necessarily uniformly continuous, and lower Lipschitz functions are necessarily invertible on their range with uniformly continuous inverse.

Bilipschitz maps are particularly useful in the context of data science.
Indeed, while many data science tools have been developed for data in Euclidean space, in many cases, data naturally arises in other metric spaces.
Given a metric space $X$ and a map $f\colon X\to\mathbb{R}^d$, one may pull back these Euclidean tools through $f$ to accommodate data in $X$.
Furthermore, we can estimate the quality of this pullback in terms of the bilipschitz bounds of $f$.
Three examples of this phenomenon follow.

\begin{example}[Nearest neighbor search]
Fix data $\{x_i\}_{i\in I}$ in a metric space $X$ and a parameter $\lambda\geq1$.
The \textbf{$\lambda$-approximate nearest neighbor problem} takes as input some $x\in X$ and outputs $j\in I$ such that
\[
d_X(x,x_j)
\leq \lambda\cdot\min_{i\in I}d_X(x,x_i).
\]
To date, many algorithms solve this in the case where $X$ is Euclidean~\cite{AndoniIR:18}.
When $X$ is non-Euclidean, we may combine such a black box with a map $f\colon X\to\mathbb{R}^d$ of distortion $c$ to solve the $c\lambda$-approximate nearest neighbor problem in $X$.
Indeed, find $j\in I$ such that
\[
\|f(x)-f(x_j)\|
\leq \lambda\cdot\min_{i\in I}\|f(x)-f(x_i)\|.
\]
Then
\[
d_X(x,x_j)
\leq\frac{1}{\alpha}\cdot \|f(x)-f(x_j)\|
\leq\frac{\lambda}{\alpha}\cdot\min_{i\in I}\|f(x)-f(x_i)\|
\leq c\lambda\cdot\min_{i\in I} d_X(x,x_i).
\]
\end{example}

\begin{example}[Clustering]
Given data $\{x_i\}_{i\in I}$ in a metric space $X$ and a parameter $k\in\mathbb{N}$, we seek to partition the data into $k$ clusters.
There are many clustering objectives to choose from, but the most popular choice when $X=\mathbb{R}^d$ is the \textbf{$k$-means objective}:
\[
\text{minimize}
\quad
\sum_{t=1}^k\frac{1}{|C_t|}\sum_{i\in C_t}\sum_{j\in C_t}d_X(x_i,x_j)^2
\quad
\text{subject to}
\quad 
C_1\sqcup\cdots \sqcup C_k=I,
\quad
C_1,\ldots,C_k\neq\emptyset.
\]
This objective is popular in Euclidean space since Lloyd's algorithm is fast and works well in practice.
In addition, $k$-means++ delivers a $O(\log k)$-competitive solution to the $k$-means problem~\cite{ArthurV:07}.
When $X$ is non-Euclidean, we may combine a $\lambda$-competitive solver for $\mathbb{R}^d$ with a map $f\colon X\to\mathbb{R}^d$ of distortion $c$ to obtain a $c^2\lambda$-competitive solver for $X$.
Indeed, suppose $\{C'_t\}_{t=1}^k$ is a $\lambda$-competitive $k$-means clustering for $\{f(x_i)\}_{i\in I}$.
Then
\begin{align*}
\sum_{t=1}^k\frac{1}{|C'_t|}\sum_{i\in C'_t}\sum_{j\in C'_t}d_X(x_i,x_j)^2
&\leq \sum_{t=1}^k\frac{1}{|C'_t|}\sum_{i\in C'_t}\sum_{j\in C'_t}\frac{1}{\alpha^2}\cdot\|f(x_i)-f(x_j)\|^2\\
&\leq \frac{\lambda}{\alpha^2} \min_{\substack{C_1\sqcup\ldots\sqcup C_k=I\\C_1,\ldots,C_k\neq\emptyset}}\sum_{t=1}^k\frac{1}{|C_t|}\sum_{i\in C_t}\sum_{j\in C_t}\|f(x_i)-f(x_j)\|^2\\
&\leq c^2\lambda \min_{\substack{C_1\sqcup\ldots\sqcup C_k=I\\C_1,\ldots,C_k\neq\emptyset}}\sum_{t=1}^k\frac{1}{|C_t|}\sum_{i\in C_t}\sum_{j\in C_t}d_X(x_i,x_j)^2.
\end{align*}
\end{example}

\begin{example}[Visualization]
To visualize data $\{x_i\}_{i=1}^n$ in a metric space $X$, one might apply \textbf{multidimensional scaling}~\cite{KruskalW:78} to represent the data as points in a Euclidean space of dimension $k\in\{2,3\}$.
The multidimensional scaling objective is
\[
\text{minimize}
\quad
\|Z^\top Z-g(D)\|_F
\quad
\text{subject to}
\quad
Z\in\mathbb{R}^{k\times n}.
\]
Here, $D_{ij}=d_X(x_i,x_j)^2$ and $g(D)=-\frac{1}{2}(I-\frac{1}{n}J)D(I-\frac{1}{n}J)$, where $I$ is the identity matrix and $J$ is the matrix of all ones.
The minimizer can be obtained by taking the eigenvalue decomposition $g(D)=\sum_i \lambda_i u_iu_i^\top$ with $\lambda_1\geq\cdots\geq\lambda_n$ and taking the $i$th row of $Z$ to be $\sqrt{(\lambda_i)_+}\cdot u_i^\top$.
Note that this requires one to compute the top eigenvalues and eigenvectors of an $n\times n$ matrix.
Meanwhile, if $X=\mathbb{R}^d$, this is equivalent to running principal component analysis on $\{x_i\}_{i=1}^n$, which only requires computing the top eigenvalues and eigenvectors of a $d\times d$ matrix, and so the runtime is linear in $n$.
This speedup is available to non-Euclidean $X$ given a map $f\colon X\to\mathbb{R}^d$ with bilipschitz bounds $\alpha$ and $\beta$.
In particular, let $Y\in\mathbb{R}^{d\times n}$ have $i$th column $f(x_i)-\mu$, where $\mu=\frac{1}{n}\sum_if(x_i)$. 
Then
\[
\min_{Z\in\mathbb{R}^{k\in n}}\|Z^\top Z-Y^\top Y\|_F
\leq \min_{Z\in\mathbb{R}^{k\in n}}\|Z^\top Z-g(D)\|_F+\|g(D)-Y^\top Y\|_F.
\]
Furthermore, taking $E\in\mathbb{R}^{n\times n}$ with $E_{ij}=\|f(x_i)-f(x_j)\|^2$, then $g(E)=Y^\top Y$.
Since $-2g$ is an orthogonal projection in the space of symmetric matrices, we have
\[
\|g(D)-Y^\top Y\|_F
\leq \frac{1}{2}\|E-D\|_F
\leq\frac{1}{2}\cdot\max\Big\{|\alpha^2-1|,|\beta^2-1|\Big\}\cdot\|D\|_F.
\]
As such, the faster solution introduces an additive error that is smaller when $\alpha$ and $\beta$ are both closer to $1$.
This approach was used in \cite{CahillIMP:22} to visualize how the shapes of U.S.\ Congressional districts are distributed.
\end{example}

\section{The metric quotient}
\label{sec.metric quotient}

In this section, we identify an appropriate notion of quotient for metric spaces.
Given a group $G$ acting on a metric space $X$, one may consider the set of orbits
\[
X/G
:=\{G\cdot x:x\in X\}.
\]
Write $x\sim y$ if $G\cdot x=G\cdot y$.
This quotient is endowed with a pseudometric $d_{X/G}$ defined by
\[
d_{X/G}(G\cdot x,G\cdot y)
=\inf \sum_{i=1}^n d_X(p_i,q_i),
\]
where the infimum is taken over all $n\in\mathbb{N}$ and $p_1,q_1,\ldots,p_n,q_n\in X$ such that $x\sim p_1$, $q_i\sim p_{i+1}$ for each $i\in\{1,\ldots,n-1\}$, and $q_n\sim y$. 

\begin{example}
Suppose $X=\mathbb{R}^2$ with Euclidean distance and $G=\{\left[\begin{smallmatrix}t&0\\0&1/t\end{smallmatrix}\right]:t\neq0\}$.
Then the members of $X/G$ are given by
\begin{align*}
\text{the origin:} \qquad & G\cdot(0,0)=\{(0,0)\} \\
\text{the $x$-axis:} \qquad & G\cdot(1,0)=\{(x,0):x\neq0\} \\
\text{the $y$-axis:} \qquad & G\cdot(0,1)=\{(0,y):y\neq 0\} \\
\text{hyperbolas:} \qquad & G\cdot(c,1)=\{(x,y):xy=c\}, \quad c\neq 0.
\end{align*}
Furthermore, $d_{X/G}\equiv 0$ since both axes are arbitrarily close to the origin and each hyperbola is arbitrarily close to both axes.
\end{example}

In this paper, we are primarily interested in cases where $G$ acts by isometries on $X$, in which case an easy argument akin to {the proof of Theorem~4 in}~\cite{Himmelberg:68} gives that
\[
d_{X/G}(G\cdot x,G\cdot y)
=\inf_{\substack{p\sim x\\q\sim y}}d_X(p,q)
=\inf_{p\sim x}d_X(p,y)
=\inf_{q\sim y}d_X(x,q).
\]

\begin{example}
Suppose $X=\mathbb{R}^2$ with Euclidean distance and $G=SO(2)$.
Then the members of $X/G$ are given by
\begin{align*}
\text{the origin:} \qquad & G\cdot(0,0)=\{(0,0)\} \\
\text{circles:} \qquad & G\cdot(r,0)=\{(r\cos\theta,r\sin\theta):\theta\in[0,2\pi)\}, \quad r>0.
\end{align*}
Furthermore, the reverse triangle inequality implies $d_{X/G}(G\cdot x,G\cdot y)=\big|\|x\|-\|y\|\big|$.
Since $d_{X/G}$ is nondegenerate, it defines a metric on $X/G$.
\end{example}

\begin{example}
\label{ex.ell2 with permutations}
Suppose $X$ is the Hilbert space $\ell^2(\mathbb{N};\mathbb{R})$ of square summable real-valued sequences and $G$ is the group of bijections $\mathbb{N}\to\mathbb{N}$.
Then $d_{X/G}$ does not define a metric on $X/G$.
Indeed, suppose $x\in X$ is entrywise nonzero, and observe
\begin{align*}
x:=(x_1,x_2,x_3,x_4,\ldots)
\sim(x_2,x_1,x_3,x_4,\ldots)
\sim(x_3,x_1,x_2,x_4,\ldots)
\sim(x_4,x_1,x_2,x_3,\ldots).
\end{align*}
We may continue in this way to construct a sequence in $G\cdot x$ that converges to the right shift $y:=(0,x_1,x_2,x_3,x_4,\ldots)$, but $y\not\in G\cdot x$ since $x$ is entrywise nonzero.
Thus, $G\cdot x\neq G\cdot y$, but $d_{X/G}(G\cdot x,G\cdot y)=0$.
\end{example}

Recall that a pseudometric space determines a metric space by identifying points of distance zero.
This motivates the definition of the \textbf{metric quotient}:
\[
X\dslash G:=\{[x]:x\in X\},
\qquad
[x]:=\{y\in X:d_{X/G}(G\cdot x,G\cdot y)=0\},
\]
\[
d_{X\dslash G}([x],[y]):=d_{X/G}(G\cdot x,G\cdot y).
\]
One may show that $d_{X\dslash G}$ is a metric on $X\dslash G$.
Here, we use $\dslash$ to signify that we are taking two quotients: we mod out by $G$, and then by the zero set of $d_{X/G}$.
This notation has been used previously in the literature to denote the \textit{geometric invariant theory quotient}~\cite{MumfordFK:94}, {a notion that directly inspired the authors' definition of metric quotient.}
A similar quotient is defined in \cite{Weaver:99} for complete metric spaces modulo general equivalence classes.
This quotient uses the same approach of identifying points with pseudometric distance zero, but then takes the metric completion of the result.
In the special case where $X$ is complete and $G$ acts on $X$ by isometries, one can show that $X\dslash G$ is already complete, and so these notions of quotient coincide.

The metric quotient has a categorical interpretation.
For this, we view $X$ as an object in some concrete category $C$.
For a group $G$ acting on $X$, a morphism $\pi\colon X \to Y$ in $C$ is called a \textbf{categorical quotient} if
\begin{itemize}
\item[(C1)]
$\pi$ is $G$-invariant, and
\item[(C2)]
every $G$-invariant morphism $X \to Z$ uniquely factors through $\pi$.
\end{itemize}
The categorical quotient separates the $G$-orbits in $X$ as well as possible for a $G$-invariant morphism in $C$.
Categorical quotients are unique up to canonical isomorphism.

\begin{lemma}
\label{lem.metric quotient is categorical}
Consider any set $\Omega$ of functions $[0,\infty]\to[0,\infty]$ that satisfies the following:
\begin{itemize}
\item[(i)]
the identity function belongs to $\Omega$, 
\item[(ii)]
$\Omega$ is closed under composition, and
\item[(iii)]
every $\omega\in\Omega$ is weakly increasing, upper semicontinuous, and vanishing at zero.
\end{itemize}
Then $\Omega$ determines a category $C$ whose objects are all metric spaces and whose morphisms are all functions $f\colon Y\to Z$ for which there exists $\omega\in\Omega$ such that
\[
d_Z(f(y),f(y'))
\leq\omega(d_Y(y,y'))
\qquad
\forall y,y'\in Y,
\]
i.e., $f$ admits $\omega$ as a modulus of continuity.
Furthermore, {for any group $G$ acting by isometries on $X$,} the map $X\to X\dslash G$ defined by $x\mapsto[x]$ is a categorical quotient in $C$.
\end{lemma}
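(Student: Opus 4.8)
The plan is to first establish that $C$ really is a category, and then to verify in turn that $\pi\colon x\mapsto[x]$ is a morphism in $C$, that it satisfies (C1), and that it satisfies (C2). That $C$ is a category reduces almost entirely to closure of the morphism class under composition: if $f\colon Y\to Z$ admits $\omega_1\in\Omega$ as a modulus of continuity and $g\colon Z\to W$ admits $\omega_2\in\Omega$, then because $\omega_2$ is weakly increasing (part of (iii)) we get $d_W(g(f(y)),g(f(y')))\le\omega_2(\omega_1(d_Y(y,y')))$, and $\omega_2\circ\omega_1\in\Omega$ by (ii); identities are handled by (i), while associativity, the unit laws, and concreteness (via the forgetful functor) are inherited from $\mathbf{Set}$ since the morphisms are honest functions.

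Next I would check that $\pi$ is a morphism and is $G$-invariant. The formula for $d_{X/G}$ under an isometric action recorded just before the lemma gives $d_{X\dslash G}([x],[y])=\inf_{p\sim x}d_X(p,y)\le d_X(x,y)$, so $\pi$ is $1$-Lipschitz and hence a morphism of $C$ by (i); and $\pi(gx)=[gx]=[x]=\pi(x)$ since $G\cdot gx=G\cdot x$, which is (C1).

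The substance is (C2), and this is also where I expect the main care to be required, since it is precisely the place where hypothesis (iii) earns its keep: monotonicity, upper semicontinuity, and vanishing at zero are exactly what let continuity estimates survive the closure operation implicit in passing from $V/G$ to $V\dslash G$. Given a $G$-invariant morphism $f\colon X\to Z$ with modulus $\omega\in\Omega$, I would define $\bar f([x]):=f(x)$ and verify three things. First, $\bar f$ is well defined: if $[x]=[y]$ then $\inf_{p\sim x}d_X(p,y)=0$, so there are $p_n\sim x$ with $d_X(p_n,y)\to0$; $G$-invariance gives $f(p_n)=f(x)$, whence $d_Z(f(x),f(y))=d_Z(f(p_n),f(y))\le\omega(d_X(p_n,y))$, and the right-hand side tends to $0$ because $\omega$ is monotone, upper semicontinuous, and vanishes at $0$ (hence continuous from the right at $0$), forcing $f(x)=f(y)$. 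Second, $\bar f$ admits $\omega$ as a modulus of continuity: for each $\varepsilon>0$ pick $p\sim x$ with $d_X(p,y)\le d_{X/G}(G\cdot x,G\cdot y)+\varepsilon$ and bound $d_Z(\bar f([x]),\bar f([y]))\le\omega(d_X(p,y))\le\omega\big(d_{X\dslash G}([x],[y])+\varepsilon\big)$ using monotonicity, then send $\varepsilon\to0^+$ and invoke upper semicontinuity of $\omega$ to recover $\omega(d_{X\dslash G}([x],[y]))$ (with the degenerate case of an infinite distance handled directly). Finally, $\bar f\circ\pi=f$ holds by construction, and since $\pi$ is surjective any morphism $g$ with $g\circ\pi=f$ agrees with $\bar f$ pointwise, which gives uniqueness of the factorization and completes the proof that $\pi$ is a categorical quotient.
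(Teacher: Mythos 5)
Your proposal is correct and follows essentially the same route as the paper: category structure from (i)--(ii) with monotonicity, $\pi$ a morphism since it is $1$-Lipschitz, (C1) from orbit equality, and (C2) by showing $f$ is constant on classes and that the induced map inherits $\omega$ as a modulus, with (iii) doing the work in exactly the same places. The only cosmetic difference is that the paper passes the infimum through $\omega$ in one step (using monotonicity and upper semicontinuity), whereas you run an equivalent $\varepsilon$-argument with the one-sided infimum formula for isometric actions.
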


For example, the metric quotient is a categorical quotient for each category of metric spaces with one of the following choices of morphisms:
\begin{align*}
\text{metric maps:} \qquad & \Omega = \{ t\mapsto t \}\\
\text{Lipschitz maps:} \qquad & \Omega = \{ t\mapsto ct : c\geq0 \}\\
\text{H\"{o}lder continuous maps:} \qquad & \Omega = \{ t\mapsto ct^\alpha : c\geq0,~ \alpha>0 \}\\
\text{uniformly continuous maps:} \qquad & \Omega = \{ \text{weakly increasing, upper semicontinuous}\\
&\qquad~~ \text{functions $[0,\infty]\to[0,\infty]$ that vanish at zero} \}.
\end{align*}
In this paper, we are primarily interested in the Lipschitz category.
{In this category, a modification of the following proof shows that $X\to X\dslash G$ is a categorical quotient for \textit{any} group $G$ acting on $X$, even if not by isometries.}

\begin{proof}[Proof of Lemma~\ref{lem.metric quotient is categorical}]
First, $C$ is a category by (i) and (ii).
Indeed, the identity map on any metric space admits the identity function $\omega_{\operatorname{id}}\in\Omega$ as a modulus of continuity.
Also, if $f\colon Y\to Y'$ and $g\colon Y'\to Y''$ have respective moduli of continuity $\omega_f,\omega_g\in\Omega$, then $g\circ f$ has modulus of continuity $\omega_g\circ \omega_f\in\Omega$.

To show that $\pi\colon X\to X\dslash G$ defined by $\pi(x)=[x]$ is a categorical quotient for $C$, we first verify that $\pi$ is a morphism in $C$:
\[
d_{X\dslash G}(\pi(x),\pi(x'))
=d_{X/G}(G\cdot x,G\cdot x')
\leq d_X(x,x')
= \omega_{\operatorname{id}}(d_X(x,x')).
\]
Next, we verify (C1).
Indeed, for every $x\in X$ and $g\in G$, we have $d_{X/G}(G\cdot x,G\cdot gx)=0$, and so $gx\in[x]$, i.e., $\pi(gx)=[gx]=[x]=\pi(x)$.
Thus, $\pi$ is $G$-invariant.
Finally, we verify (C2).
Consider any $G$-invariant morphism $f\colon X\to Z$, with modulus of continuity $\omega_f\in \Omega$.
Then for any $x,x'\in X$, it holds that
\begin{equation}
\label{eq.repeated logic}
\inf_{\substack{p\sim x\\q\sim x'}}d_Z(f(p),f(q))
\leq \inf_{\substack{p\sim x\\q\sim x'}}\omega_f(d_X(p,q))
= \omega_f\Big(\inf_{\substack{p\sim x\\q\sim x'}}d_X(p,q)\Big)
=\omega_f(d_{X\dslash G}([x],[x'])),
\end{equation}
where the second step follows from the assumptions in (iii) that $\omega_f$ is weakly increasing and upper semicontinuous.
Now suppose $[x]=[x']$.
Then we may apply \eqref{eq.repeated logic} to get
\[
d_Z(f(x),f(x'))
=\inf_{\substack{p\sim x\\q\sim x'}}d_Z(f(p),f(q))
\leq\omega_f(d_{X\dslash G}([x],[x']))
=0,
\]
where the last step follows from the assumption in (iii) that $\omega_f$ is vanishing at zero.
Thus, $f$ is constant on the level sets of $\pi$, which uniquely determines $f^\downarrow\colon X\dslash G\to Z$ such that $f=f^\downarrow\circ \pi$.
It remains to show that $f^\downarrow$ is a morphism in $C$.
To this end, \eqref{eq.repeated logic} gives
\[
d_Z(f^\downarrow([x]),f^\downarrow([x']))
=\inf_{\substack{p\sim x\\q\sim x'}}d_Z(f(p),f(q))
\leq \omega_f(d_{X\dslash G}([x],[x'])),
\]
as desired.
\end{proof}

\begin{lemma}
\label{lem.metric quotient orbit closure}
Suppose $G$ acts on $X$ by isometries.
Then $[x]=\overline{G\cdot x}$ for each $x\in X$.
In particular, $X\dslash G=X/G$ if and only if the orbits of $G$ in $X$ are closed.
\end{lemma}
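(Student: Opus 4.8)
The plan is to unwind the definitions directly. Recall from the discussion preceding the statement that, because $G$ acts by isometries, one has $d_{X/G}(G\cdot x,G\cdot y)=\inf_{p\sim x}d_X(p,y)$, and recall that $p\sim x$ means precisely $p\in G\cdot x$. First I would record the chain of equivalences
\[
y\in[x]
\iff d_{X/G}(G\cdot x,G\cdot y)=0
\iff \inf_{p\in G\cdot x}d_X(p,y)=0
\iff y\in\overline{G\cdot x},
\]
where the last step is the elementary topological fact that the distance from a point to a set vanishes exactly when the point lies in the closure of the set. This yields $[x]=\overline{G\cdot x}$. The only subtlety worth spelling out is the first equivalence: $[x]$ is defined as the zero set of $d_{X/G}(G\cdot x,\cdot)$, and one should note that, $d_{X/G}$ being a pseudometric, this zero set is genuinely the equivalence class of $G\cdot x$ under the relation ``$d_{X/G}=0$'' (this uses symmetry and the triangle inequality for $d_{X/G}$), so that $y\in[x]$ is the same as $[x]=[y]$.

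For the ``in particular'' clause, I would observe that $x\mapsto[x]$ is constant on $G$-orbits and hence descends to a surjection $q\colon X/G\to X\dslash G$, $G\cdot x\mapsto[x]$; the assertion $X\dslash G=X/G$ is precisely the statement that $q$ is a bijection, i.e.\ that $[x]=[y]$ implies $G\cdot x=G\cdot y$. If every orbit is closed, then $[x]=\overline{G\cdot x}=G\cdot x$ for all $x$, so $q$ is the identity map under this identification, in particular injective. Conversely, if $q$ is injective, fix $x\in X$ and take any $y\in\overline{G\cdot x}=[x]$; then $[y]=[x]$, so injectivity forces $G\cdot y=G\cdot x$, whence $y\in G\cdot x$. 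Thus $\overline{G\cdot x}\subseteq G\cdot x$, so the orbit $G\cdot x$ is closed.

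There is no genuine obstacle here: the argument is a short chain of definitional equivalences together with the basic fact about closures. The only thing that requires a moment's care is making precise what ``$X\dslash G=X/G$'' means and confirming that the set $[x]$ really is the $d_{X/G}$-equivalence class of $G\cdot x$; once that is in place, the first part of the lemma is immediate and the characterization of when the two quotients agree follows at once.
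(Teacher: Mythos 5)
Your proof is correct and follows essentially the paper's route: both rest on the identity $d_{X/G}(G\cdot x,G\cdot y)=\inf_{p\sim x}d_X(p,y)$ for isometric actions, and your appeal to the elementary fact that $\inf_{p\in G\cdot x}d_X(p,y)=0$ if and only if $y\in\overline{G\cdot x}$ simply packages in one step what the paper does in two (the inclusion $[x]\subseteq\overline{G\cdot x}$ together with closedness of $[x]$, which the paper obtains from continuity of $y\mapsto d_{X/G}(G\cdot x,G\cdot y)$). Your explicit handling of the ``in particular'' clause, which the paper leaves implicit, is also correct, including the remark that $y\in[x]$ implies $[y]=[x]$ via the pseudometric triangle inequality.
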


It is worth noting some sufficient conditions for the orbits of $G$ to be closed in $X$.
First, the orbits are closed whenever $G$ is finite.
Next, if $X$ is a normed vector space and $G$ is a subgroup of linear isometries, then the orbits of $G$ are closed in $X$ whenever $G$ is compact in the strong operator topology.
As an example in which $G$ acts on $X$ by isometries but $X\dslash G\neq X/G$, one may take $X=\mathbb{R}^2$ and $G\leq O(2)$ to be the group of rotations by rational multiples of $\pi$.
Indeed, in this example, the orbit of any unit vector is a dense but proper subset of the unit circle.

\begin{proof}[Proof of Lemma~\ref{lem.metric quotient orbit closure}]
Fix $x\in X$.
First, we show $[x]\subseteq\overline{G\cdot x}$.
Suppose $y\in[x]$.
Then
\[
\inf_{p\sim x}d_X(p,y)
=d_{X/G}(G\cdot x,G\cdot y)
=0,
\]
and so $y\in \overline{G\cdot x}$.
Next, $G\cdot x\subseteq[x]$, since $y\in G\cdot x$ implies $d_{X/G}(G\cdot x,G\cdot y)=0$, meaning $y\in [x]$.
Overall,
\[
G\cdot x
\subseteq[x]
\subseteq\overline{G\cdot x}.
\]
Thus, to prove $[x]=\overline{G\cdot x}$, it suffices to show that $[x]$ is closed.
To this end, consider $f\colon X\to\mathbb{R}$ defined by $f(y)=d_{X/G}(G\cdot x,G\cdot y)$.
We will show that $f$ is continuous, which in turn implies that $[x]=f^{-1}(\{0\})$ is closed, as desired.

Given $y\in X$ and $\epsilon>0$, put $\delta:=\frac{\epsilon}{2}$.
Then for $z\in X$ such that $d_X(y,z)<\delta$, we have
\[
f(z)
=\inf_{p\sim x}d_X(p,z)
\leq\inf_{p\sim x}\Big(d_X(p,y)+d_X(y,z)\Big)
\leq \inf_{p\sim x}d_X(p,y)+\delta
=f(y)+\delta,
\]
and similarly,
\[
f(z)
=\inf_{p\sim x}d_X(p,z)
\geq\inf_{p\sim x}\Big(d_X(p,y)-d_X(y,z)\Big)
\geq \inf_{p\sim x}d_X(p,y)-\delta
=f(y)-\delta.
\]
Combining these estimates gives $|f(z)-f(y)|\leq \delta<\epsilon$, as desired.
\end{proof}

Suppose $G$ acts on $X$ by isometries.
A continuous $G$-invariant map $f\colon X\to Y$ is constant on each orbit $G\cdot x$, and by continuity, it is also constant on the closure $[x]=\overline{G\cdot x}$.
In particular, $f$ factors through $f^\downarrow\colon X\dslash G\to Y$ defined by $f^\downarrow([x])=f(p)$ for $p\in[x]$:
\begin{center}
\begin{tikzcd}[row sep=large]
X \arrow[d, "\pi"'] \arrow[r, "f"] & Y\\
X\dslash G \arrow[ur, "f^\downarrow"']
\end{tikzcd}
\end{center}
Throughout, we reserve the superscript downarrow to denote this induced factor map over the metric quotient.
Also, we write $X/G$ instead of $X\dslash G$ when they coincide, but in such cases, we typically write $[x]$ instead of $G\cdot x$ for brevity.
In the same spirit, we write $d(\cdot,\cdot)$ instead of $d_{X\dslash G}(\cdot,\cdot)$ in the sequel.

This paper is primarily concerned with real Hilbert spaces modulo subgroups of the corresponding orthogonal group.
Unless stated otherwise, $V$ and $W$ denote nontrivial real Hilbert spaces, $S(V)$ is the unit sphere in $V$, and $G$ is a subgroup of the orthogonal group $O(V)$.
Whenever we have a complex inner product, we conjugate on the left.
Notably, every complex Hilbert space $V'$ is isometric to a real Hilbert space $V''$ with inner product $\operatorname{Re}\langle\cdot,\cdot\rangle_{V'}$, and furthermore, $U(V')\leq O(V'')$.
{For this reason, we do not lose generality by restricting our attention to real Hilbert spaces, though as we will see, some important examples are more naturally expressed in terms of complex Hilbert spaces.}

The \textbf{max filtering map} $\llangle\cdot,\cdot\rrangle\colon V\dslash G\times V\dslash G\to\mathbb{R}$ is defined by
\[
\llangle [x],[y]\rrangle
:=\sup_{\substack{p\in[x]\\q\in[y]}}\langle p,q\rangle.
\]
This naturally emerges in the cross term when expanding the square of the quotient metric:
\[
d([x],[y])^2
=\|x\|^2-2\llangle [x],[y]\rrangle+\|y\|^2.
\]
For a complex Hilbert space and a group of unitaries, max filtering is defined by passing to the underlying real Hilbert space:
\[
\llangle [x],[y]\rrangle
=\sup_{\substack{p\in[x]\\q\in[y]}}\operatorname{Re}\langle p,q\rangle.
\]
Max filtering was introduced in~\cite{CahillIMP:22} to construct bilipschitz invariant maps for arbitrary finite groups $G\leq O(d)$:

\begin{proposition}[Theorem~18 in~\cite{CahillIMP:22}, cf.\ Theorem~18 in~\cite{MixonQ:22}]
\label{prop.max filtering bilipschitz}
For every finite $G\leq O(d)$, there exist $n\in\mathbb{N}$ and $z_1,\ldots,z_n\in\mathbb{R}^d$ such that $[x]\mapsto \{\llangle [z_i],[x]\rrangle\}_{i=1}^n$ is bilipschitz.
\end{proposition}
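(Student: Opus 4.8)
Write $\Phi_Z\colon\mathbb{R}^d\dslash G\to\mathbb{R}^n$ for the candidate map $\Phi_Z([x]):=(\llangle[z_i],[x]\rrangle)_{i=1}^n$. Since $G$ is finite its orbits are closed, so $[x]=G\cdot x$ and $\llangle[z],[x]\rrangle=\max_{g\in G}\langle g^{-1}z,x\rangle$ is a maximum of $|G|$ linear functionals of $x$; hence each coordinate of $\Phi_Z$ is convex, piecewise linear, positively homogeneous of degree $1$, and $G$-invariant with Lipschitz constant $\|z_i\|$. In particular $\Phi_Z$ is well defined on the quotient with upper Lipschitz bound at most $(\sum_i\|z_i\|^2)^{1/2}$ for \emph{any} choice of templates, so the whole task is to choose $z_1,\dots,z_n$ giving a positive lower Lipschitz bound.

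\emph{Reduction to the diagonal.} Both $([x],[y])\mapsto\|\Phi_Z([x])-\Phi_Z([y])\|$ and $d([x],[y])$ are positively homogeneous of degree $1$ under $(x,y)\mapsto(tx,ty)$, so it suffices to bound their ratio below on the compact section $\Sigma:=\{([x],[y]):\|x\|^2+\|y\|^2=1\}$. If $\Phi_Z$ separates orbits then, for each fixed $c_0>0$, the function $\|\Phi_Z([x])-\Phi_Z([y])\|$ is continuous and strictly positive on the compact set $\{([x],[y])\in\Sigma:d([x],[y])\ge c_0\}$, hence bounded below there; since $d\le\sqrt{2}$ on $\Sigma$ this already handles pairs bounded away from the diagonal. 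So it remains to (a) make $\Phi_Z$ injective on $\mathbb{R}^d\dslash G$, and (b) obtain a \emph{uniform local} lower Lipschitz bound near the diagonal. For (a) I would take $Z$ generic of size $n\ge 2d+1$: for fixed distinct orbits $[x],[y]$ the set of $z$ with $\llangle[z],[x]\rrangle=\llangle[z],[y]\rrangle$ is a finite union of hyperplanes (on each linearity cone of $z\mapsto\llangle[z],[x]\rrangle$ equality forces a nontrivial linear relation, else $[x]=[y]$), so a dimension count over the (at most $2d$-dimensional) space of pairs of distinct orbits shows a generic $Z$ of this size separates all of them — or one may simply invoke the known separation results for max filtering.

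\emph{Near the diagonal.} This is the crux; I would argue by induction on $|G|$ using the local structure of the quotient. First split off the fixed space: with $\mathbb{R}^d=\operatorname{Fix}(G)\oplus U$, $U=\operatorname{Fix}(G)^{\perp}$, one has $\llangle[z],[x]\rrangle=\langle z^F,x^F\rangle+\llangle[z^U],[x^U]\rrangle$ and $d([x],[y])^2=\|x^F-y^F\|^2+d_U([x^U],[y^U])^2$, so $\Phi_Z$ is bilipschitz iff $\{z_i^F\}$ spans $\operatorname{Fix}(G)$ (generic) and the $U$-component is bilipschitz; thus we may assume $G$ fixes no nonzero vector, whence every $\bar x\ne 0$ has a \emph{proper} stabilizer $G_{\bar x}$. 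For $v$ small one has $[\bar x+v]=[\bar x+v']\iff G_{\bar x}v=G_{\bar x}v'$, and (choosing $Z$ so that no $z_i$ produces accidental ties) the optimal group element in $\llangle[z_i],[\bar x+v]\rrangle$ ranges over a single left coset $g_{0,i}G_{\bar x}$, giving
\[
\llangle[z_i],[\bar x+v]\rrangle=\langle g_{0,i}^{-1}z_i,\bar x\rangle+\llangle[g_{0,i}^{-1}z_i],[v]\rrangle_{G_{\bar x}}.
\]
Hence near $[\bar x]$ the map $\Phi_Z$ is, up to an additive constant and the natural identification, the $G_{\bar x}$-max-filtering map with templates $\{g_{0,i}^{-1}z_i\}_i$; it is injective because $\Phi_Z$ is, and by the inductive hypothesis (after choosing $Z$ so these derived templates are generic for each of the finitely many possible stabilizers) it is bilipschitz near $[\bar x]$. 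A compactness/contradiction argument then closes the near-diagonal gap: if pairs $([x_k],[y_k])$ approached the diagonal with $\|\Phi_Z([x_k])-\Phi_Z([y_k])\|/d([x_k],[y_k])\to 0$, then passing to aligned representatives, normalizing, and taking a subsequence, the rescaled difference quotients would converge to the derivative of a local model at a limit point $\bar x$ evaluated on a unit direction, contradicting the local lower Lipschitz bound just obtained — and the reduction to $\operatorname{Fix}(G)=0$ is exactly what keeps $\bar x$ at a stratum with strictly smaller stabilizer rather than at a global fixed point where the recursion would stall.

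\emph{Main obstacle.} The two delicate points are both in the near-diagonal step: choosing one generic $Z$ for which $\Phi_Z$ is injective \emph{and} all of its local models along the (finitely many) stabilizer strata are generic $G_{\bar x}$-max-filtering maps, so that the induction applies; and making the rescaling/limit argument precise, especially matching the rescaled difference quotients to a single local model when $x_k$ approaches $\bar x$ at a rate comparable to, or faster than, $d([x_k],[y_k])$. The remaining ingredients — the upper bound, the homogeneity reduction, and the compactness away from the diagonal — are routine.
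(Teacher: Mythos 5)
The paper does not actually prove Proposition~\ref{prop.max filtering bilipschitz}: it is imported verbatim from Theorem~18 of~\cite{CahillIMP:22} (cf.~\cite{MixonQ:22}), so your argument has to stand on its own, and as written it does not. The routine parts are fine (the upper Lipschitz bound, the homogeneity reduction to the compact section $\|x\|^2+\|y\|^2=1$, the positive minimum away from the diagonal once orbit separation is granted, and separation for generic templates). But the entire content of the theorem is the uniform lower bound near the diagonal, and that step is only sketched, with several concrete defects. (i)~The reduction to $\operatorname{Fix}(G)=0$ does not follow as stated: each coordinate is $\langle z_i^F,x^F\rangle+\llangle[z_i^U],[x^U]\rrangle$, so differences coming from the two components can cancel within a single coordinate (e.g.\ $G=\{\pm1\}$ acting on the second coordinate of $\mathbb{R}^2$ and $z=(1,1)$ gives the non-injective invariant $x_1+|x_2|$ even though $z^F$ spans $F$ and $x_2\mapsto|x_2|$ is bilipschitz); the ``iff'' fails, and the repair (placing some templates inside $\operatorname{Fix}(G)$ and others inside $U$) interferes with the genericity you invoke later. (ii)~``Choosing $Z$ so that no $z_i$ produces accidental ties'' is not achievable: for any fixed finite template set the tie loci (Voronoi boundaries of the orbits $G\cdot z_i$) are nonempty hypersurfaces, and the limit points $\bar x$ of near-diagonal sequences can land on them; at such points each local coordinate is a maximum over several cosets, i.e.\ a maximum of several $G_{\bar x}$-max filters, and the clean identification with a single $G_{\bar x}$-max filtering map with templates $g_{0,i}^{-1}z_i$ breaks. (iii)~The induction is under-powered: the inductive hypothesis is an existence statement (``there exist templates\dots''), but the local models come with \emph{forced} templates $g_{0,i}^{-1}z_i$ that vary over uncountably many base points $\bar x$; to run the induction you need a strictly stronger statement (a lower Lipschitz bound valid for an open class of template configurations, uniform over compact families of configurations and base points), and nothing in the proposal supplies it. (iv)~Most importantly, the closing rescaling/compactness argument is exactly the two-scale difficulty you flag yourself: the local model at $\bar x$ is valid only at scales below a gap that degenerates as the base point approaches a more singular stratum, and when $x_k\to\bar x$ at a rate comparable to $d([x_k],[y_k])$ the rescaled pair need not lie in the regime of any single local model, so asserting that the difference quotients ``converge to the derivative of a local model'' is precisely the statement that has to be proved.

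So this is a plausible strategy outline, not a proof: the crux is missing and several supporting claims are false or unjustified as stated. If you want to complete it, either strengthen the statement being inducted on to a uniform, template-robust lower Lipschitz bound (with constants depending continuously on the template configuration) and handle ties by treating maxima of local models directly, or follow the route of~\cite{CahillIMP:22,MixonQ:22}, which avoids this induction by choosing the templates concretely (enough templates that every direction on the sphere is well approximated by a template orbit) and deriving explicit lower bounds through a local alignment analysis for nearby orbits.
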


\section{Homogeneous extension}
\label{sec:homogeneous extension}

We motivate this section with an example:

\begin{example}
\label{ex.finite dimensional real phase retrieval}
Take $V:=\mathbb{R}^d$ and $G:=\{\pm\operatorname{id}\}\leq O(V)$, and consider the $G$-invariant map $f\colon x\mapsto xx^\top$.
The induced map $f^\downarrow\colon \mathbb{R}^d/G\to\mathbb{R}^{d\times d}$ is injective since {the column space and trace of the matrix $f^\downarrow([x])$ are given by}
\[
\operatorname{im}f^\downarrow([x])=\operatorname{span}\{x\}
\qquad
\text{and}
\qquad
\operatorname{tr}f^\downarrow([x])=\|x\|^2, 
\]
which together determine $\{\pm x\}=[x]$.
However, Figure~\ref{fig.in out distance} (left) illustrates that $f^\downarrow$ is not lower Lipschitz.
As we will see in Theorem~\ref{thm.bilipschitz requires nondifferentiability}(b), this is an artifact of the differentiability of $f$ at the origin.
In fact, we can say more: for any unit vector $u\in\mathbb{R}^d$ and $c\geq0$, we have
\[
d([cu],[0])
=\|cu\|
=c,
\qquad
\|f^\downarrow([cu])-f^\downarrow([0])\|_F
=\|c^2uu^\top\|_F
=c^2.
\]
The limit $c\to0$ establishes the lower Lipschitz bound $\alpha=0$, and $c\to\infty$ establishes $\beta=\infty$.
While $f^\downarrow$ fails to be bilipschitz at points ``far'' from $S(\mathbb{R}^d)/G$, Figure~\ref{fig.in out distance} (middle) indicates that $f^\downarrow$ is bilipschitz when restricted to $S(\mathbb{R}^d)/G$.
This suggests that $f$ is only problematic in the radial direction, which can be corrected by instead mapping $g\colon cu\mapsto cuu^\top$ for any unit vector $u\in\mathbb{R}^d$ and $c\geq0$.
This extends $f|_{S(\mathbb{R}^d)}$ to all of $\mathbb{R}^d$ in a radially isometric way:
\[
d([cu],[0])
=\|cu\|
=c,
\qquad
\|g^\downarrow([cu])-g^\downarrow([0])\|_F
=\|cuu^\top\|_F
=c.
\]
Furthermore, Figure~\ref{fig.in out distance} (right) indicates that this extension is bilipschitz on all of $\mathbb{R}^d/G$.
\end{example}

\begin{figure}
\begin{center}
\includegraphics[width=0.3\textwidth,trim={90 0 90 0},clip]{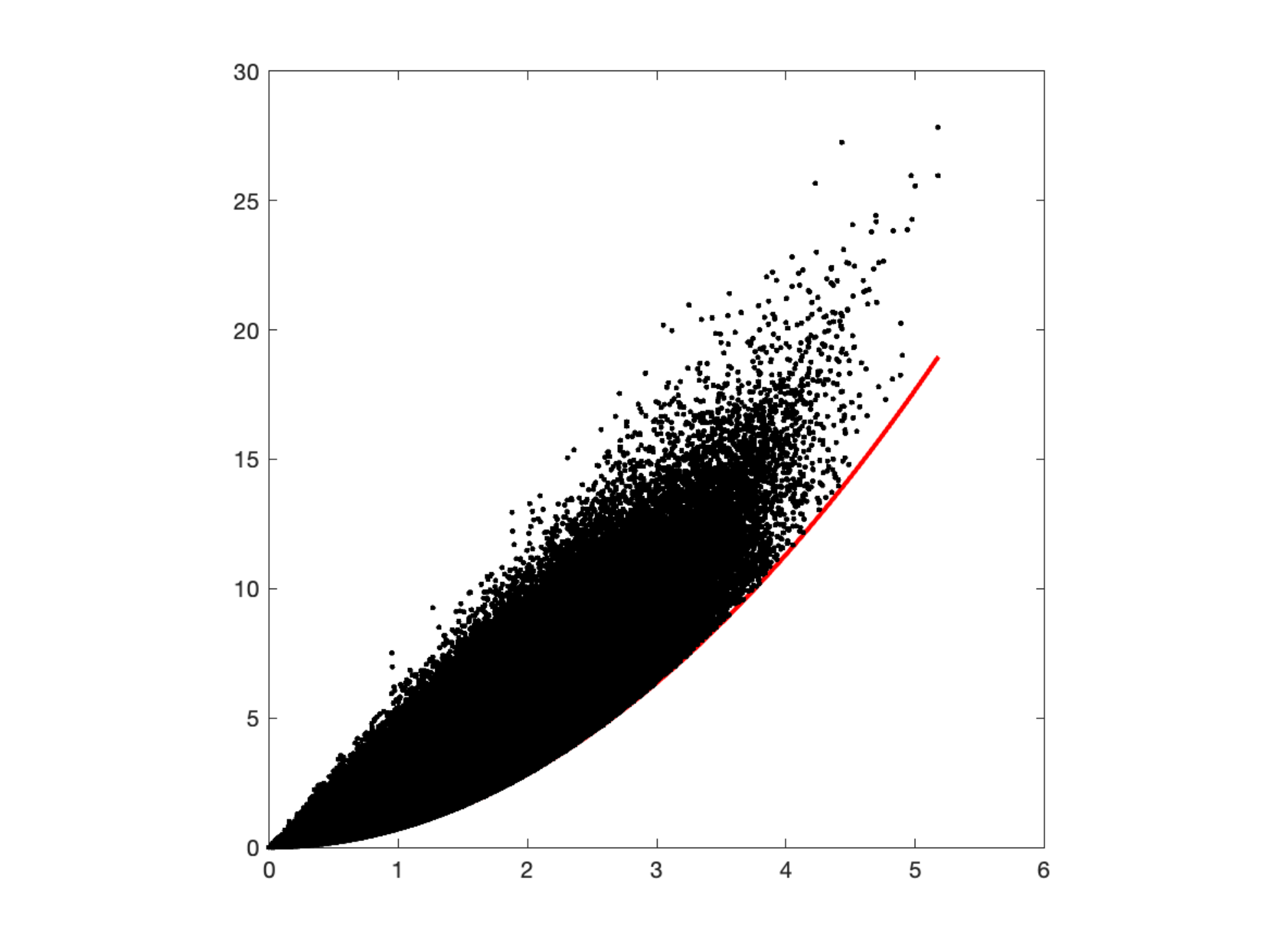}
\quad
\includegraphics[width=0.3\textwidth,trim={90 0 90 0},clip]{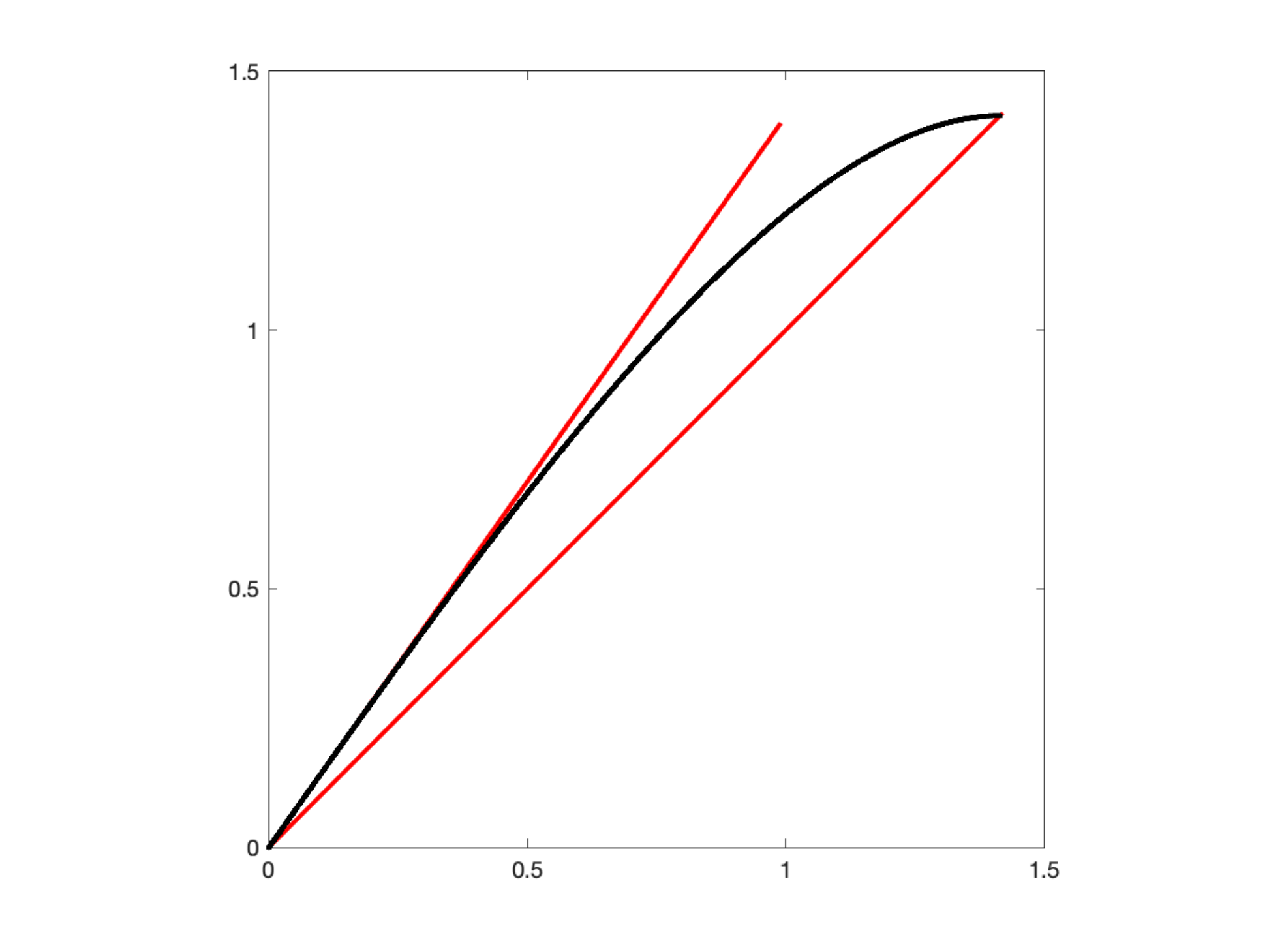}
\quad
\includegraphics[width=0.3\textwidth,trim={90 0 90 0},clip]{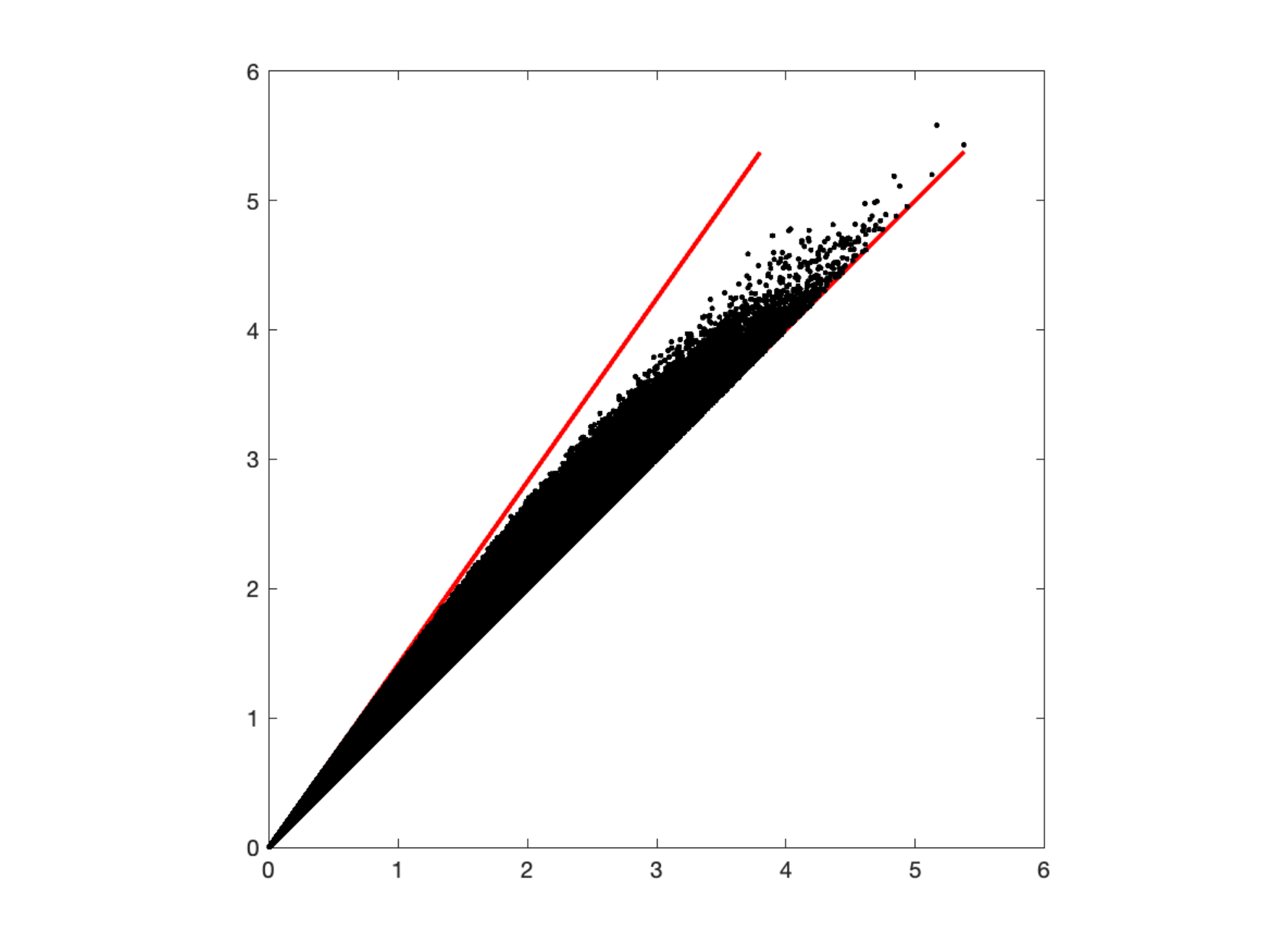}
\end{center}
\caption{\label{fig.in out distance}
Illustration of Example~\ref{ex.finite dimensional real phase retrieval}.
Here, we take $V:=\mathbb{R}^2$ and $G:=\{\pm\operatorname{id}\}\leq O(2)$.
The horizontal and vertical axes represent input and output distances, respectively.
\textbf{(left)}
Draw a million pairs of vectors $x,y\in\mathbb{R}^2$ with standard gaussian distribution and plot the output distance $\|xx^\top-yy^\top\|_F$ versus the input distance $d([x],[y])$.
One can show that if the input distance is $a>0$, then the output distance can take any value in $[a^2/\sqrt{2},\infty)$.
The lower bound is depicted in red.
Notably, the map $[x]\mapsto xx^\top$ is neither Lipschitz nor lower Lipschitz.
\textbf{(middle)}
Draw a million pairs of vectors $x,y\in\mathbb{R}^2$ uniformly from the unit circle and plot the output distance $\|xx^\top-yy^\top\|_F$ versus the input distance $d([x],[y])$.
One can show that that if the input distance is $a>0$, then the output distance resides in the interval $[a,\sqrt{2}a]$.
These bounds are depicted in red.
Notably, the map $[x]\mapsto xx^\top$ is bilipschitz when restricted to $S(\mathbb{R}^2)/G$.
\textbf{(right)}
Draw a million pairs of vectors $x,y\in\mathbb{R}^2$ with standard gaussian distribution and plot the output distance $\|\frac{1}{\|x\|}xx^\top-\frac{1}{\|y\|}yy^\top\|_F$ versus the input distance $d([x],[y])$.
One can show that that if the input distance is $a>0$, then the output distance resides in the interval $[a,\sqrt{2}a]$.
These bounds are depicted in red.
Notably, the map $[x]\mapsto \frac{1}{\|x\|}xx^\top$ is bilipschitz.
More generally, Theorem~\ref{thm.bilipschitz homogeneous extension} gives that the homogeneous extension of a bilipschitz map is bilipschitz.
}
\end{figure}

The above example motivates the following definition:

\begin{definition}
\label{def.homogeneous extension}
The \textbf{homogeneous extension} of $f\colon S(V)\dslash G\to S(W)$ is $f^\star\colon V\dslash G\to W$ defined by $f^\star([cu])=cf([u])$ for $u\in S(V)$ and $c\geq0$.
\end{definition}

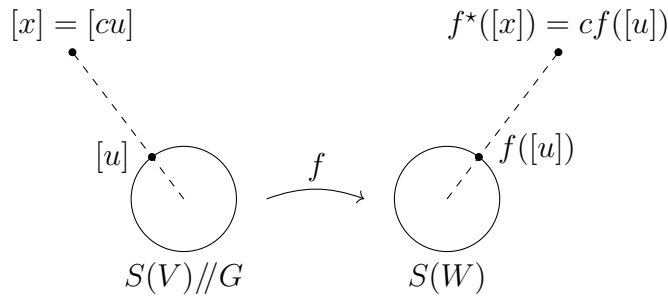
\begin{figure}
\begin{center}
\begin{tikzpicture}
\coordinate (u) at (-0.6042*0.7,0.79682*0.7);
\coordinate (x) at (-3.5*0.6042*0.7,3.5*0.7968*0.7);
\draw[dashed] (0,0) -- (x);
\draw[fill=none] (0,0) circle (1.0*0.7) node [yshift=-30] {$S(V)\dslash G$};
\draw[fill] (u) circle (1.3 pt) node [xshift=-15] {$[u]$};
\draw[fill] (x) circle (1.3 pt) node [above] {$[x]=[cu]$};
\draw [->] (1.1,0) to [out=20,in=160] (2.4,0) node[xshift=-18,yshift=12] {$f$};
\coordinate (gu) at (3.5+0.6042*0.7,0.79682*0.7);
\coordinate (gx) at (3.5+3.5*0.6042*0.7,3.5*0.7968*0.7);
\draw[dashed] (3.5,0) -- (gx);
\draw[fill=none] (3.5,0) circle (1.0*0.7) node [yshift=-30] {$S(W)$};
\draw[fill] (gu) circle (1.3 pt) node [xshift=22,yshift=3] {$f([u])$};
\draw[fill] (gx) circle (1.3 pt) node [above] {$f^\star([x])=cf([u])$};
\end{tikzpicture}
\end{center}
\caption{\label{fig.homogeneous extension}
Illustration of homogeneous extension.
See Definition~\ref{def.homogeneous extension}.}
\end{figure}

See Figure~\ref{fig.homogeneous extension} for an illustration.
A related notion appears in {the proof of Lemma~3.5 in}~\cite{ErikssonBique:18}.
It is routine to verify that the homogeneous extension is well defined.
We note that \cite{CahillCC:20} considers homogeneous extensions of functions with codomain $W$ (see Definition~2 in~\cite{CahillCC:20}).
By instead taking the codomain to be $S(W)$, the lower Lipschitz bound of $f$ quantifies the \textit{non-parallel property} (see Definition~1 in~\cite{CahillCC:20}), which in turn produces a lower Lipschitz bound of the homogeneous extension; this is a quantitative version of Proposition~1(b) in~\cite{CahillCC:20}.
In fact, we obtain \textit{optimal} bilipschitz bounds in Theorem~\ref{thm.bilipschitz homogeneous extension} below (cf.\ the upper Lipschitz bound given in Theorem~4 of~\cite{CahillCC:20} and the distortion bound given in Lemma~3.5 of~\cite{ErikssonBique:18}.).

\begin{lemma}[radial Pythagorean theorem]
\label{lem.metric extension}
For $u,v\in S(V)$ and $a,b\geq0$, it holds that
\[
d([au],[bv])^2
=(a-b)^2+ab\cdot d([u],[v])^2.
\]
\end{lemma}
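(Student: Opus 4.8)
The plan is to start from the identity $d([x],[y])^2=\|x\|^2-2\llangle[x],[y]\rrangle+\|y\|^2$ recorded just above Definition~\ref{def.homogeneous extension} (valid for all $x,y$, since it comes from expanding $\|p-q\|^2$ with no unit-norm assumption), apply it once with $(x,y)=(au,bv)$ and once with $(x,y)=(u,v)$, and then reconcile the two expressions using the homogeneity of max filtering.

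First I would record that orbit closures scale: for any $c\geq0$ and $w\in V$, we have $[cw]=c\,[w]$. For $c>0$ this holds because $G$ acts by linear maps, so $G\cdot(cw)=c(G\cdot w)$, and multiplication by $c$ is a homeomorphism of $V$ and hence commutes with taking closures; combined with Lemma~\ref{lem.metric quotient orbit closure} this gives $[cw]=\overline{G\cdot(cw)}=c\,\overline{G\cdot w}=c\,[w]$. For $c=0$ both sides equal $\{0\}$. From this, max filtering is bihomogeneous: since the supremum defining $\llangle[au],[bv]\rrangle$ runs over $p\in[au]=a[u]$ and $q\in[bv]=b[v]$, writing $p=ap'$ and $q=bq'$ with $p'\in[u]$, $q'\in[v]$ and pulling the nonnegative scalar $ab$ out of the supremum yields $\llangle[au],[bv]\rrangle=ab\,\llangle[u],[v]\rrangle$.

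The two instances of the displayed identity then read
\[
d([au],[bv])^2=a^2-2ab\,\llangle[u],[v]\rrangle+b^2,
\qquad
d([u],[v])^2=2-2\,\llangle[u],[v]\rrangle,
\]
where the second uses $\|u\|=\|v\|=1$. Solving the second equation for $\llangle[u],[v]\rrangle$ and substituting into the first turns the constant term into $a^2-2ab+b^2=(a-b)^2$ and leaves exactly $ab\,d([u],[v])^2$, which is the claim. I do not anticipate a real obstacle here; the only points requiring a moment's care are the degenerate case $c=0$ in the scaling identity $[cw]=c\,[w]$ and the observation that the quadratic expansion of the quotient metric in terms of max filtering holds without restricting to the sphere.
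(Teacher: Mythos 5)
Your proposal is correct and follows essentially the same route as the paper: the paper's one-line proof writes $d([au],[bv])^2=a^2-2ab\llangle[u],[v]\rrangle+b^2$ and substitutes $2\llangle[u],[v]\rrangle=2-d([u],[v])^2$, exactly as you do. Your additional verification that $[cw]=c[w]$ and hence $\llangle[au],[bv]\rrangle=ab\llangle[u],[v]\rrangle$ just makes explicit a homogeneity step the paper leaves implicit.
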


\begin{proof}
Write $d([au],[bv])^2=a^2-2ab\llangle [u],[v]\rrangle +b^2$ and apply $2\llangle [u],[v]\rrangle=2-d([u],[v])^2$.
\end{proof}

When $G$ is trivial, Lemma~\ref{lem.metric extension} reduces to the following (surprisingly unfamiliar) identity:
\[
\|au-bv\|^2
=(a-b)^2+ab\|u-v\|^2.
\]
In this setting, $a$ and $b$ need not be nonnegative for the identity to hold.

In what follows, we say a group $G$ acts \textbf{topologically transitively} on a topological space $X$ if there exists $x\in X$ such that the orbit $G\cdot x$ is dense in $X$.
For example, the group of rotations by rational multiples of $\pi$ acts topologically transitively on the unit circle.
By Lemma~\ref{lem.metric quotient orbit closure}, when $G$ acts by isometries on a metric space $X$, topological transitivity is equivalent to $X\dslash G$ being a singleton set.
Since bilipschitzness is ill-defined for maps on the singleton metric space, the following result isolates the topologically transitive case for completeness.

\begin{theorem}
\label{thm.bilipschitz homogeneous extension}
Consider any $G\leq O(V)$.
\begin{itemize}
\item[(a)]
If $G$ acts topologically transitively on $S(V)$, then for every $f\colon S(V)\dslash G\to S(W)$, the homogeneous extension $f^\star\colon V\dslash G\to W$ is an isometry.
\item[(b)]
If $G$ does not act topologically transitively on $S(V)$, then for every $f\colon S(V)\dslash G\to S(W)$ with optimal bilipschitz bounds $\alpha$ and $\beta$, the homogeneous extension $f^\star\colon V\dslash G\to W$ has optimal bilipschitz bounds $\min\{\alpha,1\}$ and $\max\{\beta,1\}$.
\end{itemize}
\end{theorem}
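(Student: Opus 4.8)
The plan is to reduce everything to the radial Pythagorean identity (Lemma~\ref{lem.metric extension}), applied both in the domain $V\dslash G$ and in the target $W$ (where $G$ is trivial, so the second form of the identity holds). Write a general pair of points in $V\dslash G$ as $[au]$ and $[bv]$ with $u,v\in S(V)$ and $a,b\ge 0$. By Definition~\ref{def.homogeneous extension}, $f^\star([au])=af([u])$ and $f^\star([bv])=bf([v])$, and since $f$ maps into $S(W)$, the images $f([u]),f([v])$ are unit vectors in $W$. Thus
\[
\|f^\star([au])-f^\star([bv])\|^2
=(a-b)^2+ab\,\|f([u])-f([v])\|^2
=(a-b)^2+ab\cdot d(f([u]),f([v]))^2,
\]
while Lemma~\ref{lem.metric extension} gives $d([au],[bv])^2=(a-b)^2+ab\cdot d([u],[v])^2$. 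So after squaring, the question becomes: for which constants $\gamma$ does
\[
(a-b)^2+ab\,\gamma^2\, d([u],[v])^2
\quad\text{compare to}\quad
(a-b)^2+ab\cdot d([u],[v])^2
\]
in the appropriate direction for all $a,b\ge0$ and all $u,v$? Since the $(a-b)^2$ term is common and the $ab$ coefficient is nonnegative, this is controlled entirely by how $d(f([u]),f([v]))$ compares to $d([u],[v])$ — i.e., by the Lipschitz bounds $\alpha,\beta$ of $f$ — EXCEPT that the presence of the extra $(a-b)^2$ term means a multiplier below $1$ or above $1$ gets ``pulled toward'' $1$.

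For part (a): if $G$ acts topologically transitively on $S(V)$, then $S(V)\dslash G$ is a singleton by the remark preceding the theorem, so $d([u],[v])=0$ for all $u,v$, and the identity above collapses to $\|f^\star([au])-f^\star([bv])\|^2=(a-b)^2=d([au],[bv])^2$. Hence $f^\star$ is an isometry; note this does not even use that $f$ maps into the sphere beyond the fact that $f([u])$ is a fixed unit vector, so the $ab$ term vanishes regardless.

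For part (b): here $S(V)\dslash G$ has at least two points, so $\alpha,\beta$ are well-defined, and we use $\alpha\, d([u],[v])\le d(f([u]),f([v]))\le \beta\, d([u],[v])$. Plugging the upper bound into the squared-distance formula for $f^\star$ gives
\[
\|f^\star([au])-f^\star([bv])\|^2
\le (a-b)^2+ab\,\beta^2 d([u],[v])^2
\le \max\{\beta^2,1\}\big((a-b)^2+ab\,d([u],[v])^2\big),
\]
which yields the upper bound $\max\{\beta,1\}$; the lower bound $\min\{\alpha,1\}$ comes the same way from $\alpha d([u],[v])\le d(f([u]),f([v]))$. For optimality I would exhibit matching pairs. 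Taking $a=b$ (e.g. $a=b=1$) gives $\|f^\star([u])-f^\star([v])\|=d(f([u]),f([v]))$ and $d([u],[v])$, so running $[u],[v]$ through near-optimizers of $\alpha$ and $\beta$ shows $f^\star$ achieves ratios arbitrarily close to $\alpha$ and $\beta$; and taking $b=0$ (so $d([au],[0])=a=\|f^\star([au])-f^\star([0])\|$, since $f([u])$ is a unit vector) shows $f^\star$ achieves the ratio exactly $1$ on the radial pairs. Combining, the optimal lower bound is $\min\{\alpha,1\}$ and the optimal upper bound is $\max\{\beta,1\}$.

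The one genuine subtlety — the main thing to get right rather than a real obstacle — is the optimality of the bounds: one must check that the $b=0$ radial pairs are legitimately in the domain and do force the constant $1$ into both bounds (this is exactly why the homogeneous extension into $S(W)$, rather than $W$, is the right construction: it makes the radial direction isometric), and that the sphere pairs force $\alpha$ and $\beta$. Everything else is the two-line algebra of comparing $(a-b)^2+ab\gamma^2 r^2$ to $(a-b)^2+ab r^2$, using only $\gamma\in[\alpha,\beta]$ and $a,b,r\ge 0$.
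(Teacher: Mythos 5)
Your proposal is correct and follows essentially the same route as the paper: expand both distances via the radial Pythagorean identity, absorb the common $(a-b)^2$ term to pull the Lipschitz constants toward $1$, and establish optimality using sphere pairs ($a=b=1$) for $\alpha,\beta$ and radial pairs for the constant $1$ (you use $b=0$ where the paper uses $[u]=[v]$, $a\neq b$, an immaterial difference). No gaps.
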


\begin{proof}
For (a), $S(V)\dslash G$ is a singleton set, so the image of $f$ consists of a single unit vector $w\in S(W)$.
Fix $u,v\in S(V)$ and $a,b\geq0$.
Then $d([u],[v])=0$, and so Lemma~\ref{lem.metric extension} gives 
\[
\|f^\star([au])-f^\star([bv])\|^2
=\|af([u])-bf([v])\|^2
=\|aw-bw\|^2
=(a-b)^2
=d([au],[bv])^2.
\]
For (b), fix $u,v\in S(V)$ and $a,b\geq0$.
Then Lemma~\ref{lem.metric extension} gives
\[
\|f^\star([au])-f^\star([bv])\|^2
=\|af([u])-bf([v])\|^2
=(a-b)^2+ab\|f([u])-f([v])\|^2.
\]
Since $f$ has lower Lipschitz bound $\alpha$, we then have
\begin{align*}
\|f^\star([au])-f^\star([bv])\|^2
&\geq (a-b)^2+ab \cdot \alpha^2 d([u],[v])^2\\
&\geq \min\{\alpha,1\}^2\cdot\Big((a-b)^2+ab \cdot d([u],[v])^2\Big)\\
&=\min\{\alpha,1\}^2\cdot d([au],[bv])^2,
\end{align*}
where the last step applies Lemma~\ref{lem.metric extension}.
In the case where $\min\{\alpha,1\}=1$, this bound is sharp when $[u]=[v]$ and $a\neq b$, by Lemma~\ref{lem.metric extension}.
In the case where $\min\{\alpha,1\}=\alpha$, this bound is sharp when $a=b=1$ since $\alpha$ is the optimal lower Lipschitz bound for $f$ by assumption.
A similar argument delivers the optimal upper Lipschitz bound.
\end{proof}

In practice, one might encounter $f$ with optimal lower Lipschitz bound $\alpha>1$, in which case the homogeneous extension $f^\star$ has distortion $\beta$, which is strictly larger than the distortion $\beta/\alpha$ of $f$.
In this case, we can first modify $f$ by lifting outputs into an extra dimension:

\begin{lemma}
\label{lem.lift}
{Let $X$ be a metric space.}
Given $f\colon X\to S(W)$ with optimal bilipschitz bounds $\alpha$ and $\beta$, then for each $t\in(0,1]$, the map $f_t\colon X\to S(W\oplus\mathbb{R})$ defined by $f_t(x)=(tf(x),\sqrt{1-t^2})$ has optimal bilipschitz bounds $t\alpha$ and $t\beta$.
\end{lemma}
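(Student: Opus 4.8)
The plan is to observe that $f_t$ simply rescales the output metric of $f$ by the \emph{exact} constant $t$, after first checking that $f_t$ indeed lands in the unit sphere of $W\oplus\mathbb{R}$. First I would verify the codomain claim: since $f(x)\in S(W)$, we have $\|f_t(x)\|^2=t^2\|f(x)\|^2+(\sqrt{1-t^2})^2=t^2+(1-t^2)=1$, so $f_t(x)\in S(W\oplus\mathbb{R})$ as asserted (this is where $t\in(0,1]$ matters, to keep $\sqrt{1-t^2}$ real).

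Next I would compute, for arbitrary $x,x'\in X$,
\[
\|f_t(x)-f_t(x')\|^2
=\|tf(x)-tf(x')\|^2+\big(\sqrt{1-t^2}-\sqrt{1-t^2}\big)^2
=t^2\|f(x)-f(x')\|^2,
\]
so that $\|f_t(x)-f_t(x')\|=t\,\|f(x)-f(x')\|$ for all $x,x'$. Combining this with the bilipschitz bounds $\alpha\,d_X(x,x')\le\|f(x)-f(x')\|\le\beta\,d_X(x,x')$ and multiplying through by $t$ shows that $f_t$ admits $t\alpha$ as a lower Lipschitz bound and $t\beta$ as an upper Lipschitz bound.

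Finally I would argue these bounds are \emph{optimal}. Because $\|f_t(x)-f_t(x')\|=t\,\|f(x)-f(x')\|$ with $t>0$ a fixed constant, the set of valid lower Lipschitz constants for $f_t$ is exactly $t$ times the set of valid lower Lipschitz constants for $f$; taking suprema, the optimal lower bound of $f_t$ equals $t$ times the optimal lower bound $\alpha$ of $f$, and symmetrically for the infimum giving $t\beta$. (Here I use that $X$ has at least two points so these optimal constants are well defined, per the convention established before the statement.) There is no real obstacle here; the only point requiring a moment's care is that the optimal constants transform exactly, rather than merely giving valid but possibly loose bounds, and this is immediate from the equality $\|f_t(x)-f_t(x')\|=t\,\|f(x)-f(x')\|$.
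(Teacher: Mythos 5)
Your proposal is correct and follows essentially the same route as the paper, whose entire proof is the identity $\|f_t(x)-f_t(y)\|^2=t^2\|f(x)-f(y)\|^2$; you simply spell out the codomain check and the exact scaling of the optimal constants, which the paper leaves implicit.
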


\begin{proof}
The result follows from the fact that $\|f_t(x)-f_t(y)\|^2=t^2\|f(x)-f(y)\|^2$.
\end{proof}

Given $f\colon S(V)\dslash G\to S(W)$ with optimal bilipschitz bounds $\alpha>1$ and $\beta$, then for every $t\in[\beta^{-1},\alpha^{-1}]$, the homogeneous extension $(f_t)^\star$ also has distortion $\beta/\alpha$.
In the case where $\beta<1$, we do not have an approach to make homogeneous extension preserve distortion.
Next, we show how a generalization of the same lift can be used to convert a bounded bilipschitz map $S(V)\dslash G\to W$ into a bilipschitz map $S(V)\dslash G\to S(W\oplus\mathbb{R})$:

\begin{lemma}
\label{lem.lift to normalize}
{Let $X$ be a metric space.}
Given bounded $g\colon X\to W$ with bilipschitz bounds $\alpha$ and $\beta$, then for each $t\in(0,\|g\|_\infty^{-1})$, the map $g_t\colon X\to S(W\oplus\mathbb{R})$ defined by
$
g_t(x)
=(tg(x),\sqrt{1-\|tg(x)\|^2})
$
has (possibly suboptimal) bilipschitz bounds $t\alpha$ and $(1-t^2\|g\|_\infty^2)^{-1/2}\cdot t\beta$.
\end{lemma}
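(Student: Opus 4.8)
The plan is to estimate $\|g_t(x)-g_t(y)\|$ directly for arbitrary $x,y\in X$, exploiting the orthogonal splitting $W\oplus\mathbb{R}$. Write $a:=tg(x)$ and $b:=tg(y)$, both lying in the closed ball of radius $s:=t\|g\|_\infty<1$ in $W$; this is exactly what the hypothesis $t<\|g\|_\infty^{-1}$ buys us, and it is also what keeps the square root defining $g_t$ real. Then
\[
\|g_t(x)-g_t(y)\|^2
=\|a-b\|^2+\big(\sqrt{1-\|a\|^2}-\sqrt{1-\|b\|^2}\big)^2.
\]
Since the second summand is nonnegative, the first already yields the lower bound: $\|g_t(x)-g_t(y)\|\geq\|a-b\|=t\|g(x)-g(y)\|\geq t\alpha\, d_X(x,y)$, so $t\alpha$ is a valid lower bilipschitz bound.

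For the upper bound I would control the extra coordinate. Rationalizing gives $\sqrt{1-\|a\|^2}-\sqrt{1-\|b\|^2}=\dfrac{\|b\|^2-\|a\|^2}{\sqrt{1-\|a\|^2}+\sqrt{1-\|b\|^2}}$, whose numerator has absolute value at most $\big|\|b\|-\|a\|\big|\cdot(\|a\|+\|b\|)\leq 2s\|a-b\|$ by the reverse triangle inequality, and whose denominator is at least $2\sqrt{1-s^2}$ since $\|a\|,\|b\|\leq s$. Hence $\big(\sqrt{1-\|a\|^2}-\sqrt{1-\|b\|^2}\big)^2\leq\dfrac{s^2}{1-s^2}\|a-b\|^2$, and substituting back,
\[
\|g_t(x)-g_t(y)\|^2
\leq\frac{1}{1-s^2}\,\|a-b\|^2
\leq\frac{t^2\beta^2}{1-t^2\|g\|_\infty^2}\, d_X(x,y)^2.
\]
Taking square roots gives the claimed upper bilipschitz bound $(1-t^2\|g\|_\infty^2)^{-1/2}\cdot t\beta$. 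An alternative route to the same estimate: the map $w\mapsto\sqrt{1-\|w\|^2}$ has Lipschitz constant $s/\sqrt{1-s^2}$ on the ball of radius $s$, since its gradient at $w$ has norm $\|w\|/\sqrt{1-\|w\|^2}$, which increases with $\|w\|$; precomposing with $tg$ and combining with the $W$-part of $g_t$ as above gives the result.

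I do not expect a genuine obstacle here; the one thing to keep straight is the role of the hypothesis $t<\|g\|_\infty^{-1}$, which simultaneously makes $g_t$ well defined and keeps $\sqrt{1-s^2}$ bounded away from zero so that the upper bound is finite. I would also flag---consistent with the phrase ``possibly suboptimal'' in the statement---that the reverse-triangle step and the crude denominator bound $\sqrt{1-\|a\|^2}+\sqrt{1-\|b\|^2}\geq 2\sqrt{1-s^2}$ are both lossy, so these bilipschitz bounds are generally not optimal.
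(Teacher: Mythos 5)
Your proposal is correct and follows essentially the same approach as the paper: split $\|g_t(x)-g_t(y)\|^2$ into the $W$-part (which immediately gives the lower bound $t\alpha$) and the extra coordinate, which is controlled by a factor $t\|g\|_\infty(1-t^2\|g\|_\infty^2)^{-1/2}$ times $t\|g(x)-g(y)\|$ via the reverse triangle inequality. The paper obtains that factor as the Lipschitz constant of $u\mapsto\sqrt{1-u^2}$ on $[0,t\|g\|_\infty]$ (exactly your ``alternative route''), while your main route via rationalizing the difference of square roots is only a cosmetic variant.
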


\begin{proof}
The lower Lipschitz bound follows from the fact that $\|g_t(x)-g_t(y)\|\geq t\|g(x)-g(y)\|$.
For the upper Lipschitz bound, put $r:=\|g\|_\infty$ and consider the map $h_t\colon[0,tr]\to\mathbb{R}$ defined by $h_t(u)=\sqrt{1-u^2}$.
The derivative of $h_t$ reveals that its optimal upper Lipschitz bound is $c_t:=(1-t^2r^2)^{-1/2}\cdot tr$.
We apply this bound and the reverse triangle inequality to obtain
\begin{align*}
\|g_t(x)-g_t(y)\|^2
&=t^2\|g(x)-g(y)\|^2+\big|h_t(\|tg(x)\|)-h_t(\|tg(y)\|)\big|^2\\
&\leq t^2\|g(x)-g(y)\|^2+c_t^2\cdot\big|\|tg(x)\|-\|tg(y)\|\big|^2
\leq t^2(1+c_t^2)\|g(x)-g(y)\|^2,
\end{align*}
and the result follows.
\end{proof}

Notably, Lemma~\ref{lem.lift to normalize} produces a map with near identical distortion when $t$ is small, but taking $t$ too small will send the upper Lipschitz bound below $1$, in which case homogeneous extension will increase the distortion.

In what follows, we describe a few examples of bilipschitz maps over $S(V)\dslash G$ that can be homogeneously extended to bilipschitz maps over $V\dslash G$ by Theorem~\ref{thm.bilipschitz homogeneous extension}(b).

\begin{example}[cf.\ Example~\ref{ex.finite dimensional real phase retrieval}]
\label{ex.real phase retrieval}
Fix a nontrivial real Hilbert space $V$, take $G:=\{ \pm\operatorname{id}\}\leq O(V)$, and consider $f\colon S(V)/G\to S(V^{\otimes 2})$ defined by $f([x])=x\otimes x$.
If $V$ has dimension~$1$, then $G$ acts transitively on $S(V)$, and so Theorem~\ref{thm.bilipschitz homogeneous extension}(a) applies.
Otherwise, for each $x,y\in S(V)$ with $[x]\neq[y]$, we have
\[
d([x],[y])^2
=\min\{\|x-y\|^2,\|x+y\|^2\}
=2-2|\langle x,y\rangle|,
\]
\[
\|f([x])-f([y])\|^2
=2-2\langle x\otimes x,y\otimes y\rangle
=2-2\langle x,y\rangle^2,
\]
and so taking $t:=|\langle x,y\rangle|$ gives
\[
\frac{\|f([x])-f([y])\|}{d([x],[y])}
=\sqrt{\frac{1-t^2}{1-t}}
=\sqrt{1+t}.
\]
Since $t$ takes values in $[0,1)$, it follows that $f$ has optimal bilipschitz bounds $1$ and $\sqrt{2}$. 
By Theorem~\ref{thm.bilipschitz homogeneous extension}(b), the homogeneous extension
\[
f^\star(x)
=\left\{\begin{array}{cl}\displaystyle\frac{x\otimes x}{\|x\|}&\text{if $x\neq0$}\vspace{0.1in}\\0&\text{otherwise}\end{array}\right.
\]
has the same optimal bilipschitz bounds.
By Corollary~\ref{cor.c2 real phase retrieval}, this is the minimum possible distortion for a Euclidean embedding of $V/G$.
By contrast, in the case where $V=\ell^2$, max filtering approaches fail to deliver bilipschitz invariants~\cite{CahilCD:16,AlaifariG:17}.
\end{example}

\begin{example}
Fix a nontrivial complex Hilbert space $V$, take $G:=\{\omega\cdot\operatorname{id}:|\omega|=1\}\leq U(V)$, and consider $f\colon S(V)/G\to S(V\otimes V^*)$ defined by $f([x])=x\otimes \langle x,\cdot\rangle$.
If $V$ has dimension~$1$, then $G$ acts transitively on $S(V)$, and so Theorem~\ref{thm.bilipschitz homogeneous extension}(a) applies.
Otherwise, following the argument in Example~\ref{ex.real phase retrieval}, then for any $x,y\in S(V)$, we have
\[
d([x],[y])^2
=2-2|\langle x,y\rangle|,
\qquad
\|f([x])-f([y])\|^2
=2-2|\langle x,y\rangle|^2,
\]
and $f$ has optimal bilipschitz bounds $1$ and $\sqrt{2}$. 
By Theorem~\ref{thm.bilipschitz homogeneous extension}(b), the homogeneous extension $f^\star$ has the same optimal bilipschitz bounds.
By Corollary~\ref{cor.c2 complex phase retrieval}, this is the minimum distortion for a Euclidean embedding of $V/G$.
\end{example}

\begin{example}
\label{ex.discrete rotations}
Fix $V:=\mathbb{C}$, take $G:=\langle e^{2\pi i/r}\rangle\leq U(1)$ for some $r\in\mathbb{N}$, and consider $f\colon S(V)/G\to S(V)$ defined by $f([x])=x^r$.
Then for every $x,y\in S(V)$, we have
\[
d([x],[y])^2
=2-2\max_{g\in G}\operatorname{Re}(gx\overline{y}),
\qquad
\|f([x])-f([y])\|^2
=2-2\operatorname{Re}((x\overline{y})^r).
\]
When $[x]\neq[y]$, we take $h\in G$ such that $hx\overline{y}=e^{2\pi it}$ for some $t\in[-\frac{1}{2r},\frac{1}{2r})\setminus\{0\}$.
Then
\[
\frac{\|f([x])-f([y])\|}{d([x],[y])}
=\sqrt{\frac{1-\operatorname{Re}((x\overline{y})^r)}{1-\displaystyle\max_{g\in G}\operatorname{Re}(gx\overline{y})}}
=\sqrt{\frac{1-\operatorname{Re}((hx\overline{y})^r)}{1-\displaystyle\max_{g\in G}\operatorname{Re}(ghx\overline{y})}}
=\frac{\sin(\pi rt)}{\sin(\pi t)}
=:g(t).
\]
We note that $g$ is decreasing on $(0,\frac{1}{2r})$ since $g'(t)<0$ precisely when $\frac{\tan(\pi t)}{\pi t}<\frac{\tan(\pi rt)}{\pi rt}$, while $\frac{d}{dx}(\frac{\tan(x)}{x})>0$ for $x\in(0,\frac{\pi}{2})$ since $\sin(x)\cos(x)<x$.
Since $g$ is even, evaluating at $t=\frac{1}{2r}$ and taking the limit $t\to0$ gives the optimal bilipschitz bounds, namely, $\csc(\frac{\pi}{2r})$ and $r$.
Notably, $\csc(\frac{\pi}{2r})>1$ when $r>1$, so one should lift as in Lemma~\ref{lem.lift} before taking the homogeneous extension in order to preserve the distortion $r\sin(\frac{\pi}{2r})\in[1,\frac{\pi}{2})$.
By Corollary~\ref{cor.c2 discrete rotations}, this is the minimum distortion for a Euclidean embedding of $V/G$.
\end{example}

\section{Non-differentiability of bilipschitz invariants}
\label{sec.nondiff bilipschitz}

We motivate this section with an example:

\begin{example}
\label{ex.max filtering is not differentiable}
By Proposition~\ref{prop.max filtering bilipschitz}, max filtering produces bilipschitz invariants modulo finite groups of $O(d)$.
However, these invariants fail to be differentiable.
Indeed, $x\mapsto\llangle [z],[x]\rrangle$ is non-differentiable precisely at the boundaries of the Voronoi diagram of the orbit $G\cdot z$.
For an explicit example, take $V:=\mathbb{R}^2$, templates $z_1:=(1,1)$, $z_2:=(1,2)$, and $z_3:=(3,1)$, and given $G\leq O(2)$, consider the max filtering invariant
\[
f\colon
x
\mapsto
\Big(~\llangle [z_1],[x]\rrangle,~\llangle [z_2],[x]\rrangle,~\llangle [z_3],[x]\rrangle~\Big).
\]
We consider two cases: when $G$ consists of rotations by multiples of $2\pi/3$, and when $G$ is the dihedral group of order $6$.
See Figure~\ref{fig.points of non-differentiability} for an illustration.
The points at which $f$ is not differentiable form rays emanating from the origin.
Later in this section, we show that every point with a nontrivial stabilizer in $G$ (i.e., every \textit{non-principal point}) is necessarily a point of non-differentiability in any bilipschitz invariant; see Theorem~\ref{thm.bilipschitz requires nondifferentiability}(b).
In both parts of Figure~\ref{fig.points of non-differentiability} (and in fact, whenever $G$ is nontrivial), the origin is one such non-principal point.
In the example on the left, this is the only non-principal point, whereas on the right, every non-differentiable point is a non-principal point.
In what follows, we show how bilipschitzness \textit{requires} such non-differentiability of max filtering invariants.
\end{example}

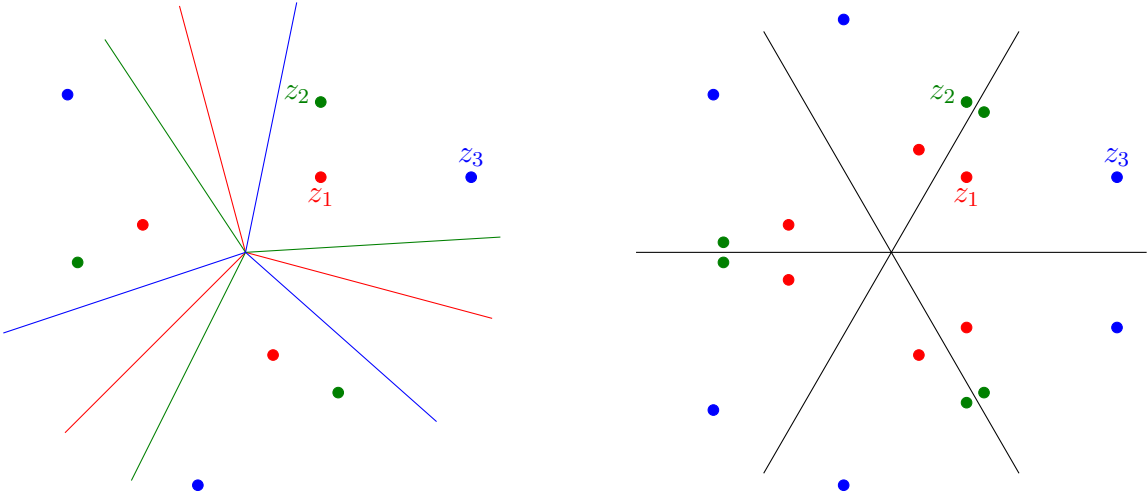
\begin{figure}
\begin{center}
\begin{tikzpicture}
\coordinate (O) at (0,0);
\coordinate (z1) at (1,1);
\coordinate (z2) at (1,2);
\coordinate (z3) at (3,1);
\coordinate (Rz1) at ({-1/2-0.5*sqrt(3)},{-1/2+0.5*sqrt(3)});
\coordinate (R2z1) at ({-1/2+0.5*sqrt(3)},{-1/2-0.5*sqrt(3)});
\coordinate (Rz2) at ({-1/2-sqrt(3)},{-1+0.5*sqrt(3)});
\coordinate (R2z2) at ({-1/2+sqrt(3)},{-1-0.5*sqrt(3)});
\coordinate (Rz3) at ({-1.5-0.5*sqrt(3)},{-1/2+1.5*sqrt(3)});
\coordinate (R2z3) at ({-1.5+0.5*sqrt(3)},{-1/2-1.5*sqrt(3)});
\draw[color=red] (O) -- ({1.2*(1-sqrt(3))},{1.2*(1+sqrt(3))});
\draw[color=red] (O) -- (-1.2*2,-1.2*2);
\draw[color=red] (O) -- ({1.2*(1+sqrt(3))},{1.2*(1-sqrt(3))});
\draw[color=darkgreen] (O) -- ({1.2*sqrt(8/5)*(0.5-sqrt(3))},{1.2*sqrt(8/5)*(1+0.5*sqrt(3))});
\draw[color=darkgreen] (O) -- ({1.2*sqrt(8/5)*(-1)},{1.2*sqrt(8/5)*(-2)});
\draw[color=darkgreen] (O) -- ({1.2*sqrt(8/5)*(0.5+sqrt(3))},{1.2*sqrt(8/5)*(1-0.5*sqrt(3))});
\draw[color=blue] (O) -- ({1.2*sqrt(8/10)*(1.5-0.5*sqrt(3))},{1.2*sqrt(8/10)*(0.5+1.5*sqrt(3))});
\draw[color=blue] (O) -- ({1.2*sqrt(8/10)*(-3)},{1.2*sqrt(8/10)*(-1)});
\draw[color=blue] (O) -- ({1.2*sqrt(8/10)*(1.5+0.5*sqrt(3))},{1.2*sqrt(8/10)*(0.5-1.5*sqrt(3))});
\draw[fill,color=red] (z1) circle (2 pt) node [below] {$z_1$};
\draw[fill,color=darkgreen] (z2) circle (2 pt) node [xshift=-9,yshift=3] {$z_2$};
\draw[fill,color=blue] (z3) circle (2 pt) node [above] {$z_3$};
\draw[fill,color=red] (Rz1) circle (2 pt);
\draw[fill,color=red] (R2z1) circle (2 pt);
\draw[fill,color=darkgreen] (Rz2) circle (2 pt);
\draw[fill,color=darkgreen] (R2z2) circle (2 pt);
\draw[fill,color=blue] (Rz3) circle (2 pt);
\draw[fill,color=blue] (R2z3) circle (2 pt);
\end{tikzpicture}
\qquad\qquad
\begin{tikzpicture}
\coordinate (O) at (0,0);
\coordinate (z1) at (1,1);
\coordinate (z2) at (1,2);
\coordinate (z3) at (3,1);
\coordinate (Rz1) at ({-1/2-0.5*sqrt(3)},{-1/2+0.5*sqrt(3)});
\coordinate (R2z1) at ({-1/2+0.5*sqrt(3)},{-1/2-0.5*sqrt(3)});
\coordinate (Rz2) at ({-1/2-sqrt(3)},{-1+0.5*sqrt(3)});
\coordinate (R2z2) at ({-1/2+sqrt(3)},{-1-0.5*sqrt(3)});
\coordinate (Rz3) at ({-1.5-0.5*sqrt(3)},{-1/2+1.5*sqrt(3)});
\coordinate (R2z3) at ({-1.5+0.5*sqrt(3)},{-1/2-1.5*sqrt(3)});
\coordinate (Fz1) at (1,-1);
\coordinate (Fz2) at (1,-2);
\coordinate (Fz3) at (3,-1);
\coordinate (FRz1) at ({-1/2-0.5*sqrt(3)},{1/2-0.5*sqrt(3)});
\coordinate (FR2z1) at ({-1/2+0.5*sqrt(3)},{1/2+0.5*sqrt(3)});
\coordinate (FRz2) at ({-1/2-sqrt(3)},{1-0.5*sqrt(3)});
\coordinate (FR2z2) at ({-1/2+sqrt(3)},{1+0.5*sqrt(3)});
\coordinate (FRz3) at ({-1.5-0.5*sqrt(3)},{1/2-1.5*sqrt(3)});
\coordinate (FR2z3) at ({-1.5+0.5*sqrt(3)},{1/2+1.5*sqrt(3)});
\draw[] ({-1.2*sqrt(8)},0) -- ({1.2*sqrt(8)},0);
\draw[] ({1.2*sqrt(8)*0.5},{1.2*sqrt(8)*0.5*sqrt(3)}) -- ({-1.2*sqrt(8)*0.5},{-1.2*sqrt(8)*0.5*sqrt(3)});
\draw[] ({-1.2*sqrt(8)*0.5},{1.2*sqrt(8)*0.5*sqrt(3)}) -- ({1.2*sqrt(8)*0.5},{-1.2*sqrt(8)*0.5*sqrt(3)});
\draw[fill,color=red] (z1) circle (2 pt) node [below] {$z_1$};
\draw[fill,color=darkgreen] (z2) circle (2 pt) node [xshift=-9,yshift=3] {$z_2$};
\draw[fill,color=blue] (z3) circle (2 pt) node [above] {$z_3$};
\draw[fill,color=red] (Rz1) circle (2 pt);
\draw[fill,color=red] (R2z1) circle (2 pt);
\draw[fill,color=darkgreen] (Rz2) circle (2 pt);
\draw[fill,color=darkgreen] (R2z2) circle (2 pt);
\draw[fill,color=blue] (Rz3) circle (2 pt);
\draw[fill,color=blue] (R2z3) circle (2 pt);
\draw[fill,color=red] (Fz1) circle (2 pt);
\draw[fill,color=darkgreen] (Fz2) circle (2 pt);
\draw[fill,color=blue] (Fz3) circle (2 pt);
\draw[fill,color=red] (FRz1) circle (2 pt);
\draw[fill,color=red] (FR2z1) circle (2 pt);
\draw[fill,color=darkgreen] (FRz2) circle (2 pt);
\draw[fill,color=darkgreen] (FR2z2) circle (2 pt);
\draw[fill,color=blue] (FRz3) circle (2 pt);
\draw[fill,color=blue] (FR2z3) circle (2 pt);
\end{tikzpicture}
\end{center}
\caption{\label{fig.points of non-differentiability}
Illustration of Example~\ref{ex.max filtering is not differentiable}.
Max filtering invariants are piecewise linear.
In this example, the orbits of the templates $z_1,z_2,z_3\in\mathbb{R}^2$ determine points of non-differentiability (namely, the boundaries of their Voronoi cells) in the resulting max filtering invariant.
On the left, the group $G\leq O(2)$ consists of rotations by multiples of $2\pi/3$, while on the right, $G$ is the dihedral group of order $6$.
Considering Theorem~\ref{thm.bilipschitz requires nondifferentiability}(b), the non-differentiability of max filtering is an artifact of its bilipschitzness.}
\end{figure}

Recall that a function $f\colon V\to W$ is \textbf{Fr\'{e}chet differentiable} at $x\in V$ if there exists a bounded linear map $A\colon V\to W$ such that 
\[
\lim_{h\to0}\frac{\|f(x+h)-f(x)-Ah\|}{\|h\|}
=0,
\]
in which case we say $A$ is the \textbf{Fr\'{e}chet derivative} at $x$, which we denote by $Df(x):=A$; see~\cite{CartanMHM:17} for more information.
If $V$ and $W$ are Euclidean, then the matrix representation of $Df(x)$ is the Jacobian of $f$ at $x$.

\begin{lemma}
\label{lem.derivative of invariant map}
Suppose $f\colon V\to W$ is $G$-invariant and Fr\'{e}chet differentiable at $x\in V$.
Then for each $g\in G$, it holds that $f$ is Fr\'{e}chet differentiable at $gx$ with $Df(gx)=Df(x)\circ g^{-1}$.
\end{lemma}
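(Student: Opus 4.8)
The plan is to verify the Fréchet differentiability of $f$ at $gx$ directly from the definition, using the $G$-invariance of $f$ together with the fact that $g$ is a linear isometry (so that $g$ and $g^{-1}$ are bounded linear maps with operator norm $1$). The candidate derivative at $gx$ is the bounded linear map $A := Df(x)\circ g^{-1}$, which is bounded since it is a composition of bounded linear maps.

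\textbf{Key steps.} First I would substitute $h = g k$ for $k \in V$, which is legitimate since $g$ is a bijection, and note that $\|h\| = \|k\|$ because $g$ is an isometry; hence $h \to 0$ if and only if $k \to 0$. Next, using $G$-invariance in the form $f(gx + h) = f(gx + gk) = f(g(x+k)) = f(x+k)$ and likewise $f(gx) = f(x)$, I would rewrite the difference quotient at $gx$ as
\[
\frac{\|f(gx+h) - f(gx) - A h\|}{\|h\|}
= \frac{\|f(x+k) - f(x) - Df(x)\,k\|}{\|k\|},
\]
where I have used $A h = Df(x)(g^{-1}(gk)) = Df(x)\,k$ and $\|h\| = \|k\|$. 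The right-hand side tends to $0$ as $k \to 0$ by the assumed Fréchet differentiability of $f$ at $x$, so the left-hand side tends to $0$ as $h \to 0$. This shows $f$ is Fréchet differentiable at $gx$ with derivative $A = Df(x)\circ g^{-1}$, as claimed.

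\textbf{Main obstacle.} There is essentially no serious obstacle here; the only point requiring care is to keep track of where isometry (rather than mere linearity or mere invertibility) of $g$ is used — it enters exactly once, to ensure $\|gk\| = \|k\|$ so that the change of variables does not alter the normalization in the difference quotient, and to ensure $g^{-1}$ is bounded so that $A$ is a legitimate Fréchet derivative. (If $g$ were linear and invertible but not isometric, one would still get differentiability with the same formula, but the bookkeeping on the norm would route through $\|g^{-1}\|$ and $\|g\|$; since the paper's standing assumption is $G \leq O(V)$, the isometric case is all that is needed.) So the proof is a short, direct computation, and I would present it in two or three lines of display math.
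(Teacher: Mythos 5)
Your proposal is correct and follows essentially the same route as the paper: both use $G$-invariance to write $f(gx+h)=f(x+g^{-1}h)$ (your substitution $h=gk$ is the same change of variables), use the isometry of $g$ to equate the norms in the denominator, and conclude from the definition of the Fr\'{e}chet derivative at $x$. No gaps.
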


\begin{proof}
By $G$-invariance, we have $f(gx)=f(x)$ and $f(gx+h)=f(x+g^{-1}h)$, and so
\[
\lim_{h\to0}\frac{\|f(gx+h)-f(gx)-(Df(x)\circ g^{-1})h\|}{\|h\|}
=\lim_{h\to0}\frac{\|f(x+g^{-1}h)-f(x)-Df(x)(g^{-1}h)\|}{\|g^{-1}h\|}.
\]
Change variables $g^{-1}h\mapsto h$ and apply Fr\'{e}chet differentiability at $x$ to obtain the result. 
\end{proof}

\begin{theorem}
\label{thm.bilipschitz requires nondifferentiability}
Suppose $x\in V$ is fixed by some nonidentity member of $G$, and consider any $G$-invariant map $f\colon V\to W$ that is Fr\'{e}chet differentiable at $x$.
Then the following hold.
\begin{itemize}
\item[(a)]
There exists a unit vector $v\in V$ orthogonal to $x$ such that $Df(x)v=0$.
\item[(b)]
If $G$ is finite or $x=0$, then $f^\downarrow\colon V\dslash G\to W$ is not lower Lipschitz.
\item[(c)]
If $G$ is finite and $x\in S(V)$, then the restriction $f^\downarrow|_{S(V)/G}$ is not lower Lipschitz.
\end{itemize}
\end{theorem}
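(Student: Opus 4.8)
The plan is to derive (a) from Lemma~\ref{lem.derivative of invariant map} and then use the vanishing direction it produces to defeat lower-Lipschitzness in (b) and (c).

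For (a), fix a nonidentity $g\in G$ with $gx=x$. Lemma~\ref{lem.derivative of invariant map} gives $Df(gx)=Df(x)\circ g^{-1}$, and $gx=x$ then forces $Df(x)\circ(\operatorname{id}-g^{-1})=0$; that is, $Df(x)$ annihilates the range of $\operatorname{id}-g^{-1}$. Since $g$ is orthogonal and fixes $x$, the closed subspace $x^\perp$ is $g$-invariant (hence $g^{-1}$-invariant), and $g|_{x^\perp}\neq\operatorname{id}$ (otherwise $g$ would fix all of $V=\operatorname{span}\{x\}\oplus x^\perp$). Pick $w\in x^\perp$ with $g^{-1}w\neq w$; then $v:=(\operatorname{id}-g^{-1})w/\|(\operatorname{id}-g^{-1})w\|$ is a unit vector in $x^\perp$ with $Df(x)v=0$. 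When $x=0$ the orthogonality requirement is vacuous and the same argument with $x^\perp=V$ applies.

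For (b), take $v$ from (a) and consider $t\mapsto[x+tv]$ as $t\to0$. Since $d([x+tv],[x])=\inf_{g\in G}\|g(x+tv)-x\|$, and every $h\in\operatorname{Stab}(x)$ satisfies $\|h(x+tv)-x\|=|t|\,\|hv\|=|t|$ while every $g\notin\operatorname{Stab}(x)$ satisfies $\|g(x+tv)-x\|\geq\|gx-x\|-|t|$, we get $d([x+tv],[x])=|t|$ for all small $|t|$: this is immediate when $x=0$ (then $\operatorname{Stab}(x)=G$), and when $G$ is finite it holds once $|t|<\tfrac12\min_{g\notin\operatorname{Stab}(x)}\|gx-x\|$, a positive quantity. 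In particular $[x+tv]\neq[x]$ for small $t\neq0$. On the other hand $\|f(x+tv)-f(x)\|=o(t)$, since $f$ is Fr\'echet differentiable at $x$ and $Df(x)v=0$. Therefore $\|f^\downarrow([x+tv])-f^\downarrow([x])\|/d([x+tv],[x])=o(t)/|t|\to0$, so $f^\downarrow$ has optimal lower Lipschitz bound $0$. (When $G$ is finite, $f^\downarrow$ is automatically well defined since $V\dslash G=V/G$ by Lemma~\ref{lem.metric quotient orbit closure}.)

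For (c), with $G$ finite and $x\in S(V)$, replace the straight segment by the great-circle arc $\gamma(t):=(\cos t)x+(\sin t)v$, which lies on $S(V)$ because $v\perp x$ and $\|v\|=1$. Writing $\gamma(t)-x=tv+r(t)$ with $\|r(t)\|=O(t^2)$, differentiability at $x$ and $Df(x)v=0$ again give $\|f(\gamma(t))-f(x)\|=o(t)$. For the denominator: if $h\in\operatorname{Stab}(x)$ then $hv\perp x$, so $\|h\gamma(t)-x\|^2=\|(\cos t-1)x+(\sin t)hv\|^2=2-2\cos t$, i.e.\ $\|h\gamma(t)-x\|=2|\sin(t/2)|$; if $g\notin\operatorname{Stab}(x)$ then $\|g\gamma(t)-x\|\geq\|gx-x\|-\|\gamma(t)-x\|\geq\min_{g\notin\operatorname{Stab}(x)}\|gx-x\|-2|\sin(t/2)|$, which exceeds $2|\sin(t/2)|$ for small $|t|$ exactly as in (b). Hence $d_{S(V)/G}([\gamma(t)],[x])=2|\sin(t/2)|\sim t$, and the ratio $\|f^\downarrow([\gamma(t)])-f^\downarrow([x])\|/d_{S(V)/G}([\gamma(t)],[x])=o(t)/(2|\sin(t/2)|)\to0$, so $f^\downarrow|_{S(V)/G}$ is not lower Lipschitz. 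The only delicate point throughout is this distance computation --- that for small $t$ the quotient distance from $[x]$ to the perturbed point is attained by a stabilizer element, equalling exactly $|t|$ in (b) and $2|\sin(t/2)|$ in (c) --- and it is here that finiteness of $G$ (or $x=0$) is essential; the numerator estimates are routine consequences of differentiability at $x$.
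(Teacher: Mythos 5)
Your proof is correct and follows essentially the same route as the paper: part (a) via Lemma~\ref{lem.derivative of invariant map} (you take $v$ in the range of $\operatorname{id}-g^{-1}$ rather than in $\ker(g-\operatorname{id})^\perp$, but for orthogonal $g$ these give the same directions and the same conclusion), and parts (b) and (c) via the radial perturbation $x+tv$ and the great-circle perturbation $(\cos t)x+(\sin t)v$, with the quotient distance computed exactly for small $t$ using finiteness of $G$ (or $x=0$) and the numerator estimated by Fr\'echet differentiability together with $Df(x)v=0$. The key steps, including the observation that the distance is realized by stabilizer elements for small $t$, match the paper's argument.
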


In words, a bilipschitz invariant cannot be differentiable at any point with a nontrivial stabilizer.
In particular, if $G$ is nontrivial, no bilipschitz invariant is differentiable at $0$.
For each example in Section~\ref{sec:homogeneous extension}, $G$ acts freely on $V\setminus\{0\}$, and so we can get away with $0$ being the only point at which $f$ is not differentiable.

\begin{proof}[Proof of Theorem~\ref{thm.bilipschitz requires nondifferentiability}]
For (a), fix a nontrivial $g\in G$ with $gx=x$.
Since $\operatorname{ker}(g-\operatorname{id})$ is a closed and proper subspace of $V$, there is a unit vector $v$ in $\operatorname{ker}(g-\operatorname{id})^\perp$.
Notably, $v$ is orthogonal to $x\in\operatorname{ker}(g-\operatorname{id})$.
It remains to show $Df(x)v=0$.
By Lemma~\ref{lem.derivative of invariant map}, $Df(x)=Df(gx)=Df(x)\circ g^{-1}$, and so $Df(x)^*=g\circ Df(x)^*$.
Thus, $g$ fixes every element of $\operatorname{im}Df(x)^*$ so that $\operatorname{im}Df(x)^*\leq\operatorname{ker}(g-\operatorname{id})$.
Then $v\in\operatorname{ker}(g-\operatorname{id})^\perp\leq(\operatorname{im}Df(x)^*)^\perp=\operatorname{ker}Df(x)$.

For (b), take $v\in V$ from (a).
Since $G$ is finite or $x=0$, either the members of $[x]$ have some minimum pairwise distance $\delta>0$, or $[x]$ is a singleton set and we may take $\delta:=\infty$ in what follows.
For every $t\in\mathbb{R}$ such that $|t|<\delta/2$, it holds that $d([x+tv],[x])=|t|$, since every $y\in[x]\setminus\{x\}$ satisfies $\|(x+tv)-y\|\geq||t|-\|y-x\||>|t|$.
The definition of the Fr\'{e}chet derivative then implies
\[
\liminf_{t\to0}\frac{\|f^\downarrow([x+tv])-f^\downarrow([x])\|}{d([x+tv],[x])}
=\liminf_{t\to0}\frac{\|f(x+tv)-f(x)-Df(x)tv\|}{|t|}
=0.
\]
Thus, $f^\downarrow$ is not lower Lipschitz.

For (c), we argue as in (b) with a different perturbation of $x$.
Take $v$ from (a) and put $h(t):=x\cos(t)+v\sin(t)-x$ so that $x+h(t)\in S(V)$ for all $t$.
An argument like above shows $d([x+h(t)],[x])=\|h(t)\|$ when $|t|$ is sufficiently small.
We apply the triangle inequality, the definition of the Fr\'{e}chet derivative, and the fact that $Df(x)v=0$ to get
\begin{align*}
&\liminf_{t\to0}\frac{\|f^\downarrow([x+h(t)])-f^\downarrow([x])\|}{d([x+h(t)],[x])}
=\liminf_{t\to0}\frac{\|f(x+h(t))-f(x)\|}{\|h(t)\|}\\
&\leq\liminf_{t\to0}\frac{\|f(x+h(t))-f(x)-Df(x)h(t)\|+\|Df(x)(h(t)-tv)\|+\|Df(x)tv\|}{\|h(t)\|}\\
&=\liminf_{t\to0}\frac{\|Df(x)(h(t)-tv)\|}{\|h(t)\|}
\leq\|Df(x)\|\cdot\liminf_{t\to0}\frac{\|h(t)-tv\|}{\|h(t)\|}.
\end{align*}
Considering $\|h(t)-tv\|^2=(\cos t-1)^2+(\sin t-t)^2$ and $\|h(t)\|^2=(\cos t-1)^2+\sin^2 t$, the $\liminf$ above is zero, and so $f^\downarrow|_{S(V)/G}$ is not lower Lipschitz.
\end{proof}

\begin{example}
As an application, consider the problem of multi-reference alignment~\cite{BendoryBMZS:17}.
Here, $G$ is the group of cyclic permutations, and the task is to estimate an unknown $[x]\in\mathbb{C}^d/G$ given data of the form $\{g_ix+z_i\}_{i=1}^n$, where the $g_i$'s are drawn uniformly from $G$ and the $z_i$'s are independent complex gaussian vectors with large variance.
If $G$ were trivial, then one could estimate $x$ with the sample average.
Since $G$ is nontrivial, we are inclined to ``average out the noise'' in an invariant domain.

One popular invariant is the \textit{bispectrum} $B\colon\mathbb{C}^d\to\mathbb{C}^{d^2}$ defined by
\[
B(x)_{k\ell}
=\hat{x}(k)\cdot\overline{\hat{x}(\ell)}\cdot\hat{x}(\ell-k),
\]
where $\hat{x}$ denotes the discrete Fourier transform of $x$, and $\ell-k$ is interpreted modulo $d$.
The approach in \cite{BendoryBMZS:17} performs multi-reference alignment by first using the data $\{g_ix+z_i\}_{i=1}^n$ to find an estimate $y$ of $B(x)$, and then using $y$ to estimate $[x]$.
For the second step, they construct a function $\psi\colon(\mathbb{C}^\times)^{d^2}\to\mathbb{C}^d/G$ such that $\psi(B^\downarrow([x]))=[x]$ whenever $\hat{x}$ is everywhere nonzero, and they show that $\psi$ is \textit{locally} Lipschitz.

In what follows, we show that $\psi$ is not Lipschitz as a consequence of Theorem~\ref{thm.bilipschitz requires nondifferentiability}(b).
Suppose otherwise that $\psi$ is $\beta$-Lipschitz, and let $U$ denote the vectors in $\mathbb{C}^d$ with everywhere nonzero discrete Fourier transform.
Since $U$ is dense in $\mathbb{C}^d$ and $B$ (and hence $B^\downarrow$) is continuous, we have
\[
\inf_{\substack{x,y\in\mathbb{C}^d\\{[x]\neq[y]}}}
\frac{\|B^\downarrow([x])-B^\downarrow([y])\|}{d([x],[y])}
=\inf_{\substack{x,y\in U\\{[x]\neq[y]}}}
\frac{\|B^\downarrow([x])-B^\downarrow([y])\|}{d([x],[y])}
=\inf_{\substack{x,y\in U\\{[x]\neq[y]}}}
\frac{\|B^\downarrow([x])-B^\downarrow([y])\|}{d(\psi(B^\downarrow([x])),\psi(B^\downarrow([y])))}
\geq \frac{1}{\beta},
\]
i.e., $B^\downarrow$ is $1/\beta$-lower Lipschitz.
Since $G$ fixes the constant functions and $B$ is differentiable as a map between real Hilbert spaces, this contradicts Theorem~\ref{thm.bilipschitz requires nondifferentiability}(b).
{(Note that in this argument, the points used to break a Lipschitz bound on $\psi$ are precisely those with vanishing Fourier coefficients, perhaps because at least some of these orbits are not even separated by $B$.)}
In general, a left inverse to a translation-invariant map $f$ cannot be Lipschitz unless $f$ is not differentiable at the constant functions (and at any other nontrivially periodic function).
Such non-differentiability is afforded by max filtering, which in turn delivers bilipschitz invariants~\cite{CahillIMP:22}.
\end{example}

\section{Bilipschitz polynomial invariants}
\label{sec.bilipschitz poly}

Classical invariant theory is concerned with polynomial maps, thanks in part to the following well-known result.

\begin{proposition}
\label{prop.generators separate}
For each finite $G\leq GL(d)$, there exists an injective polynomial map $\mathbb{R}^d/G\to\mathbb{R}^n$ for some $n\in\mathbb{N}$.
\end{proposition}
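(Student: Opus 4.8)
The plan is to use the classical construction from invariant theory: the ring $\mathbb{R}[x_1,\dots,x_d]^G$ of $G$-invariant polynomials is finitely generated, and a choice of generators assembles into a polynomial map that separates $G$-orbits. First I would invoke Hilbert's finiteness theorem (Noether's version suffices since $G$ is finite): the invariant ring $R^G$, where $R=\mathbb{R}[x_1,\dots,x_d]$, is a finitely generated $\mathbb{R}$-algebra. Fix generators $p_1,\dots,p_n \in R^G$ and define $P\colon \mathbb{R}^d \to \mathbb{R}^n$ by $P(x) = (p_1(x),\dots,p_n(x))$. Each $p_i$ is $G$-invariant, so $P$ is constant on $G$-orbits and descends to a map $P^\downarrow\colon \mathbb{R}^d/G \to \mathbb{R}^n$; it remains to show $P^\downarrow$ is injective, i.e.\ that the $p_i$ separate orbits.

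For injectivity, suppose $x,y \in \mathbb{R}^d$ lie in distinct $G$-orbits. Since $G$ is finite, each orbit $G\cdot x$ and $G\cdot y$ is a finite (hence Zariski-closed) set, and they are disjoint. By a standard separation argument — either the Nullstellensatz applied to the radical ideals of these two closed sets, or the observation that $R$ is integral over $R^G$ — there is an invariant polynomial $q \in R^G$ with $q \equiv 0$ on $G\cdot x$ and $q \equiv 1$ on $G\cdot y$; concretely one can start with any polynomial $f$ vanishing on $G\cdot x$ but not on $G\cdot y$, average it over the group to get $\tilde f(z) = \frac{1}{|G|}\sum_{g\in G} f(gz) \in R^G$, and then one checks $\tilde f$ still separates the orbits (this uses that $G\cdot x$ is $G$-stable so $f$ vanishing on it forces $\tilde f$ to vanish on it, while a suitable choice of $f$, e.g.\ a product of linear forms, keeps $\tilde f(y)\neq 0$ — the clean route is to take $f$ vanishing on $G\cdot x$ and equal to $1$ at $y$, then note $\tilde f(y) = \frac{1}{|G|}\sum_g f(gy)$ and replace $f$ by a product $\prod_{g}( \text{translate})$ if needed to guarantee nonvanishing). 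Having produced such a $q \in R^G$, write $q$ as a polynomial in the generators $p_1,\dots,p_n$; then $q(x) \neq q(y)$ forces $p_i(x) \neq p_i(y)$ for some $i$, so $P(x) \neq P(y)$. Hence $P^\downarrow$ is injective.

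The main obstacle — and the only genuinely nonroutine point — is the clean production of a $G$-invariant separating polynomial for two given disjoint orbits; the subtlety is that naively averaging a separating polynomial can destroy the separation. The standard fix (and the one I would write out carefully) is: pick $f\in R$ with $f|_{G\cdot x}=0$; since $\prod_{g\in G}(f - f(gy))$ is $G$-invariant up to reindexing... more simply, consider $h(z):=\prod_{g\in G} f(gz)$ applied to a well-chosen $f$, or invoke that $R$ is a finitely generated module over $R^G$ (Noether) so that closed $G$-stable sets are cut out by invariants. Any of these gives the needed $q$, and the rest is bookkeeping. Note that finiteness of $G$ is used twice: once for Hilbert/Noether finite generation of $R^G$, and once to ensure orbits are finite (hence closed), which is what makes the separation argument elementary; in particular characteristic $0$ lets us average over $G$ freely.
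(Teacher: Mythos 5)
Your proposal follows essentially the same route as the paper: take Hilbert's (Noether's) finite generation of the invariant ring, use the generators as the coordinates of the map, and reduce injectivity of the induced map on $\mathbb{R}^d/G$ to the fact that $G$-invariant polynomials separate the (finite, hence closed) orbits of a finite group. The only difference is that the paper simply cites this separation property (Theorem~3 in Ch.~3, Sec.~4 of Onishchik--Vinberg) together with Hilbert's theorem, whereas you prove it by hand. One remark on the step you flag as "the only genuinely nonroutine point": it is in fact routine, and you do not need the product trick, a cleverly chosen $f$, or module-finiteness of $R$ over $R^G$. Since $G\cdot x$ and $G\cdot y$ are disjoint finite sets, interpolation gives a polynomial $f$ with $f\equiv 0$ on all of $G\cdot x$ and $f\equiv 1$ on all of $G\cdot y$ (not merely at $y$); because both orbits are $G$-stable, the Reynolds average $\tilde f(z)=\frac{1}{|G|}\sum_{g\in G}f(gz)$ then takes the value $0$ at every point of $G\cdot x$ and $1$ at every point of $G\cdot y$, so averaging cannot destroy the separation. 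With that one-line fix your argument is complete; the final step (expressing $\tilde f$ as a polynomial in the generators, so some generator must already separate the two orbits) is exactly the bookkeeping the paper leaves implicit.
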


\begin{proof}
{First, the $G$-invariant polynomials separate all $G$-orbits by Theorem 3 in Ch.\ 3, Sec.\ 4 of~\cite{OnishchikV:90}.
Next, Hilbert~\cite{Hilbert:93} established that the algebra of $G$-invariant polynomials is generated by finitely many polynomials $\{h_i(x)\}_{i=1}^n$.
Defining $h\colon\mathbb{R}^d\to\mathbb{R}^n$ by $h(x):=(h_1(x),\ldots,h_n(x))$, then $h^\downarrow$ is the desired injection.}
\end{proof}

In this section, we evaluate polynomial maps in terms of bilipschitzness.

\begin{lemma}
Suppose $G\leq O(d)$ is nontrivial and $f\colon \mathbb{R}^d\to\mathbb{R}^n$ is a $G$-invariant polynomial.
Then $f^\downarrow\colon\mathbb{R}^d\dslash G\to\mathbb{R}^n$ is not lower Lipschitz.
Furthermore, $f^\downarrow$ is upper Lipschitz only if $f$ is affine linear, in which case $f^\downarrow$ is not injective.
\end{lemma}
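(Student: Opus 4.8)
The plan is to prove the three assertions in sequence, leveraging the non-differentiability result from Theorem~\ref{thm.bilipschitz requires nondifferentiability}(b). For the first claim, since $G$ is nontrivial, it fixes $0\in\mathbb{R}^d$, and any polynomial map $f$ is Fr\'echet differentiable everywhere (in particular at $0$). Thus Theorem~\ref{thm.bilipschitz requires nondifferentiability}(b) immediately gives that $f^\downarrow$ is not lower Lipschitz. This is the quick part.

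For the ``furthermore'' clause, I would argue the contrapositive. Suppose $f^\downarrow$ is upper Lipschitz with constant $\beta$, so $\|f(x)-f(y)\|\le\beta\,d([x],[y])\le\beta\|x-y\|$ for all $x,y$, i.e.\ $f$ itself is $\beta$-Lipschitz as a map $\mathbb{R}^d\to\mathbb{R}^n$. A Lipschitz polynomial map on all of $\mathbb{R}^d$ must be affine linear: coordinate-wise, each component $f_i$ is a polynomial whose gradient is globally bounded (since $\|\nabla f_i(x)\|\le\beta$ via the mean value theorem along segments), and a polynomial with bounded gradient has degree at most one. Hence $f$ is affine linear, say $f(x)=Ax+b$ for a matrix $A$ and vector $b$.

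It remains to show such an affine linear $f$ is not injective on $\mathbb{R}^d/G$. Since $f$ is $G$-invariant, $Agx+b=Ax+b$ for all $g\in G$ and $x\in\mathbb{R}^d$, so $A(g-\mathrm{id})=0$, meaning $A$ annihilates the subspace $\mathrm{span}\bigcup_{g\in G}\mathrm{im}(g-\mathrm{id})$. Because $G$ is a nontrivial subgroup of $O(d)$, there is some $g\ne\mathrm{id}$, and $\mathrm{im}(g-\mathrm{id})$ is a nonzero subspace (if $g-\mathrm{id}$ were zero then $g=\mathrm{id}$); pick any nonzero $w\in\mathrm{im}(g-\mathrm{id})\subseteq\ker A$. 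Then $f(w)=Aw+b=b=f(0)$, yet I claim $[w]\ne[0]$: indeed $[0]=\{0\}$ since $G$ fixes $0$, while $w\ne 0$, so $[w]\ni w$ is distinct from $[0]$. Hence $f^\downarrow([w])=f^\downarrow([0])$ with $[w]\ne[0]$, so $f^\downarrow$ is not injective.

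I do not anticipate a serious obstacle here; the only point requiring mild care is the claim that a globally Lipschitz polynomial is affine, which follows from observing that the partial derivatives of a polynomial of degree $\ge 2$ are unbounded, contradicting the Lipschitz bound (equivalently, bounding $\|\nabla f_i\|$ over segments via the mean value inequality). One should also double-check the edge case $n$ arbitrary and that $d([x],[y])\le\|x-y\|$, which holds because $\pi\colon\mathbb{R}^d\to\mathbb{R}^d\dslash G$ is $1$-Lipschitz (it admits the identity as modulus of continuity, as shown in the proof of Lemma~\ref{lem.metric quotient is categorical}).
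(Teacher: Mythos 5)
Your proof is correct and follows essentially the same route as the paper: the non-lower-Lipschitz claim via Theorem~\ref{thm.bilipschitz requires nondifferentiability}(b) at the origin, and the non-injectivity of an affine invariant via $A(g-\operatorname{id})=0$ are exactly the paper's arguments (you even make explicit the small point $[w]\neq[0]$ that the paper leaves implicit). The only variation is the middle step: the paper shows directly that a non-affine invariant fails to be upper Lipschitz by comparing the growth of a degree-$k\geq2$ leading homogeneous component along a ray with $d([tv],[0])=|t|\,\|v\|$, whereas you pass through ``$f^\downarrow$ upper Lipschitz $\Rightarrow$ $f$ Lipschitz on $\mathbb{R}^d$ (since $d([x],[y])\leq\|x-y\|$) $\Rightarrow$ bounded gradients $\Rightarrow$ $f$ affine''--- an equivalent, equally elementary argument.
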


\begin{proof}
Since the origin has a nontrivial stabilizer and $f$ is differentiable at the origin, Theorem~\ref{thm.bilipschitz requires nondifferentiability} gives that $f^\downarrow$ is not lower Lipschitz.

Next, suppose $f$ is not affine linear.
We will show that $f^\downarrow$ is not upper Lipschitz.
Select a coordinate function $f_i(x)$ of degree $k\geq2$, and let $g(x)$ denote the homogeneous component of $f_i(x)$ of degree $k$.
Since $g(x)$ is nonzero, there exists $v\in\mathbb{R}^d$ such that $g(v)\neq0$.
Note that $g(0)=0$ by homogeneity, and so $v\neq0$.
Then $g(tv)=t^kg(v)$, and so $f_i(tv)=t^kg(v)+O(t^{k-1})$ for large $t$.
Thus, $\|f^\downarrow([tv])-f^\downarrow([0])\|\geq|f_i(tv)-f_i(0)|= |t^kg(v)+O(t^{k-1})|= t^k|g(v)|+O(t^{k-1})$ for large $t$.
Since $d([tv],[0])=|t|\|v\|$, it follows that $f^\downarrow$ is not upper Lipschitz.

Finally, suppose $f$ is affine linear and write $f(x)=Ax+b$.
Since $G$ is nontrivial, there exists $g\in G$ and $y\in\mathbb{R}^d$ such that $z:=gy\neq y$.
Then $f(y)=f(z)$, meaning $Ay+b=Az+b$, and so $A(y-z)=0$.
It follows that $f^\downarrow([y-z])=f^\downarrow([0])$, i.e., $f^\downarrow$ is not injective.
\end{proof}

While we cannot expect polynomial invariants to be bilipschitz, there is some hope of applying ideas from Section~\ref{sec:homogeneous extension} to obtain bilipschitz maps from polynomial invariants by homogeneous extension.
In fact, all of the examples from Section~\ref{sec:homogeneous extension} were obtained in this way.
Considering Lemma~\ref{lem.lift to normalize} {and Theorem~\ref{thm.bilipschitz homogeneous extension}(b)}, it suffices to seek $G$-invariant polynomial maps $f\colon \mathbb{R}^d\to\mathbb{R}^n$ for which $f^\downarrow|_{S(\mathbb{R}^d)\dslash G}$ is bilipschitz.
By Theorem~\ref{thm.bilipschitz requires nondifferentiability}(c), this is not possible when $G$ is finite unless it acts freely on $S(\mathbb{R}^d)$.
(Indeed, for each example in Section~\ref{sec:homogeneous extension}, $G$ acts freely.)
This is a strong condition, and it implies that every abelian subgroup of $G$ is cyclic; see Theorems~5.3.1 and~5.3.2 in~\cite{Wolf:74}.
Nevertheless, nonabelian examples exist, and furthermore, they have been classified; see Chapter~6 of~\cite{Wolf:74}.
Interestingly, every one of these examples admits a polynomial $f\colon \mathbb{R}^d\to\mathbb{R}^n$ for which $f^\downarrow|_{S(\mathbb{R}^d)/G}$ is bilipschitz:

\begin{theorem}
\label{thm.poly free}
For finite $G\leq O(d)$, the following are equivalent:
\begin{itemize}
\item[(a)]
$G$ acts freely on $S(\mathbb{R}^d)$.
\item[(b)]
There exists a bilipschitz polynomial map $S(\mathbb{R}^d)/G\to\mathbb{R}^n$ for some $n\in\mathbb{N}$.
\item[(c)]
There exists a bilipschitz polynomial map $S(\mathbb{R}^d)/G\to\mathbb{R}^{2d-1}$.
\end{itemize}
\end{theorem}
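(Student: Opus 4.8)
The plan is to prove the cycle of implications (c) $\Rightarrow$ (b) $\Rightarrow$ (a) $\Rightarrow$ (c). The first implication is trivial since any bilipschitz polynomial map into $\mathbb{R}^{2d-1}$ is in particular a bilipschitz polynomial map into some $\mathbb{R}^n$. For (b) $\Rightarrow$ (a), I would argue by contraposition: if $G$ does not act freely on $S(\mathbb{R}^d)$, then some $x \in S(\mathbb{R}^d)$ is fixed by a nonidentity element of $G$, and since any polynomial map $f$ is Fr\'{e}chet differentiable everywhere, Theorem~\ref{thm.bilipschitz requires nondifferentiability}(c) immediately gives that $f^\downarrow|_{S(\mathbb{R}^d)/G}$ is not lower Lipschitz, hence not bilipschitz. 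So the entire content of the theorem lies in (a) $\Rightarrow$ (c).

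For (a) $\Rightarrow$ (c), the idea is to use the classification of finite groups acting freely on a sphere (Wolf~\cite{Wolf:74}) together with a concrete polynomial construction. A free action of $G$ on $S(\mathbb{R}^d)$ makes $S(\mathbb{R}^d) \to S(\mathbb{R}^d)/G$ a covering map onto a smooth compact manifold (a \emph{spherical space form}) of dimension $d-1$. The natural separating polynomial to try is the averaged outer-product moment: for a fixed even integer $k$, consider $m_k(x) := \frac{1}{|G|}\sum_{g \in G}(gx)^{\otimes k}$, whose coordinates are $G$-invariant polynomials of degree $k$ in $x$. One shows that for $k$ large enough (in fact $k=2$ may already work when paired with the Veronese-type embedding, but a safe choice exists), $m_k^\downarrow$ separates $G$-orbits on $S(\mathbb{R}^d)$ and has nonvanishing derivative in the directions tangent to the sphere at every point — precisely because the stabilizers are trivial, so the obstruction in Theorem~\ref{thm.bilipschitz requires nondifferentiability}(a) does not arise. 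Combining injectivity on the compact manifold $S(\mathbb{R}^d)/G$ with a rank condition on the derivative of $m_k^\downarrow$ and compactness yields that $m_k^\downarrow|_{S(\mathbb{R}^d)/G}$ is bilipschitz. To bring the target dimension down to $2d-1$, I would invoke the Whitney-type embedding principle: a smooth embedding of a compact $(d-1)$-manifold generically projects to a smooth embedding into $\mathbb{R}^{2(d-1)+1} = \mathbb{R}^{2d-1}$, and a generic linear projection of a polynomial map is still polynomial and, being a smooth embedding of a compact manifold, is automatically bilipschitz.

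The main obstacle I anticipate is verifying that $m_k^\downarrow$ (or whatever explicit invariant one picks) is genuinely an \emph{immersion} on $S(\mathbb{R}^d)/G$ — i.e., that its derivative restricted to the tangent space of the sphere is injective at every point — uniformly enough to get a positive lower Lipschitz bound. Theorem~\ref{thm.bilipschitz requires nondifferentiability}(a) tells us the stabilizer-based obstruction vanishes, but it does not by itself certify that the derivative has full rank; one must check that the $G$-invariant polynomials of degree $\le k$ restrict to functions whose differentials span the cotangent space at each orbit. A clean way to handle this is to note that the full collection of degree-$k$ coordinates of $m_k$ for a single sufficiently large $k$ already separates points \emph{and} tangent vectors on the compact set $S(\mathbb{R}^d)$ modulo the (now trivial-stabilizer) $G$-action — this is where a careful argument, possibly invoking that polynomials separate jets on compacta, is needed — after which compactness upgrades "immersion $+$ injective" to "bilipschitz," and the dimension reduction to $2d-1$ is the standard generic-projection argument. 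The reliance on Wolf's classification may be avoidable if the moment-map construction works uniformly for all free actions, which would be the cleanest route.
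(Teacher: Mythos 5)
Your treatment of the easy directions matches the paper: (b)$\Rightarrow$(a) and (c)$\Rightarrow$(a) follow from Theorem~\ref{thm.bilipschitz requires nondifferentiability}(c) since polynomials are everywhere differentiable, and your dimension-reduction step for the target $\mathbb{R}^{2d-1}$ (generic linear projection of a compact $(d-1)$-dimensional semialgebraic submanifold, then compactness to upgrade a smooth embedding to a bilipschitz map) is exactly the paper's route via Propositions~\ref{prop.generic projection of semialgebraic manifold} and~\ref{prop.embedding is bilipschitz}. You are also right that Wolf's classification is not needed; the paper never uses it in the proof.

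However, the heart of (a)$\Rightarrow$(b)/(c) --- producing a $G$-invariant polynomial map on $S(\mathbb{R}^d)$ that is both orbit-separating and an immersion --- is left as an acknowledged gap, and the specific candidate you offer does not work as stated. A single averaged moment $m_k(x)=\frac{1}{|G|}\sum_{g\in G}(gx)^{\otimes k}$ with $k$ even satisfies $m_k(x)=m_k(-x)$, so it fails to separate $[x]$ from $[-x]$ whenever $-\operatorname{id}\notin G$ (e.g., $G$ trivial, or $G$ cyclic of odd order, both of which act freely); with $k$ odd it can vanish identically (e.g., $G=\{\pm\operatorname{id}\}$). So no single degree suffices in general, and even granting separation, the immersion claim (``nonvanishing derivative in the tangent directions'') is never verified --- Theorem~\ref{thm.bilipschitz requires nondifferentiability}(a) only says the stabilizer obstruction is absent, as you note, which does not certify full rank. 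The paper closes precisely this gap with an explicit local construction: for each $u\in S(\mathbb{R}^d)$, freeness makes the orbit points $\{gu\}_{g\in G}$ distinct, so Proposition~\ref{prop.lagrange inperpolation} provides polynomials with prescribed gradients $\nabla q_{u,i}(gu)=ge_i$; applying the Reynolds operator yields $G$-invariant polynomials whose combined derivative at $u$ is the identity, and a compactness/finite-subcover argument patches these into a global invariant polynomial immersion $r_1$ on $S(\mathbb{R}^d)$. Separation is then handled by adjoining a separating invariant map $r_2$ from Proposition~\ref{prop.generators separate} (Hilbert), and Propositions~\ref{prop.quotient immersion} and~\ref{prop.embedding is bilipschitz} convert the resulting injective immersion into a bilipschitz map on $S(\mathbb{R}^d)/G$. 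To complete your argument you would either need to adopt such an interpolation-plus-averaging construction, or prove a genuine ``invariant polynomials of bounded degree separate points and tangent vectors under a free action'' lemma --- the very step you flag as the main obstacle.
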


If $G\leq O(d)$ is finite and acts freely on $S(\mathbb{R}^d)$, then one may combine Theorem~\ref{thm.poly free}(c) with Lemma~\ref{lem.lift to normalize} and Theorem~\ref{thm.bilipschitz homogeneous extension}(b) to obtain a bilipschitz map $\mathbb{R}^d/G\to\mathbb{R}^{2d}$.
By {comparison}, max filtering delivers injective maps $\mathbb{R}^d/G\to\mathbb{R}^{2d}$, and while these maps are conjectured to be bilipschitz, this is currently only known for certain choices of $G$~\cite{CahillIMP:22,MixonQ:22}.
(Max filtering is known to provide bilipschitz invariants $\mathbb{R}^d/G\to\mathbb{R}^n$ for large $n$.)
We will prove Theorem~\ref{thm.poly free} with the help of a few (essentially known) propositions, {whose proofs can be found in the appendix.}

\begin{proposition}
\label{prop.lagrange inperpolation}
Given distinct $u_1,\ldots,u_n\in\mathbb{R}^d$ and not necessarily distinct $v_1,\ldots,v_n\in\mathbb{R}^d$, there exists a polynomial function $p\colon\mathbb{R}^d\to\mathbb{R}$ such that $\nabla p(u_i)=v_i$ for every $i\in\{1,\ldots,n\}$.
\end{proposition}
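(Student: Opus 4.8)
The plan is to build $p$ as a sum $p=\sum_{i=1}^n p_i$ of ``localized'' polynomials, where each $p_i$ is engineered so that $\nabla p_i(u_i)=v_i$ while $\nabla p_i(u_j)=0$ for every $j\neq i$; summing then yields the claim by linearity of the gradient. This is essentially a multivariate Hermite interpolation, so I expect no serious obstacle.

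First, for the linear part, set $L_i(x):=\langle v_i,x-u_i\rangle$, an affine polynomial with constant gradient $\nabla L_i\equiv v_i$ and $L_i(u_i)=0$. Next I would construct a polynomial $q_i$ that equals $1$ at $u_i$ and vanishes to order at least two at each $u_j$ with $j\neq i$. Since the $u_j$ are distinct, for each $j\neq i$ there is a vector $w_{ij}$ with $\langle w_{ij},u_i\rangle\neq\langle w_{ij},u_j\rangle$; rescaling and translating, choose an affine functional $\ell_{ij}$ with $\ell_{ij}(u_i)=1$ and $\ell_{ij}(u_j)=0$, and set $q_i:=\prod_{j\neq i}\ell_{ij}^2$. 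Then $q_i(u_i)=1$, and because the factor $\ell_{ij}^2$ vanishes to second order at $u_j$, so does the product $q_i$; hence $q_i(u_j)=0$ and $\nabla q_i(u_j)=0$ for all $j\neq i$.

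Finally, put $p_i:=L_i\cdot q_i$ and $p:=\sum_{i=1}^n p_i$. By the product rule, $\nabla p_i=q_i\,v_i+L_i\,\nabla q_i$. Evaluating at $u_i$ gives $\nabla p_i(u_i)=q_i(u_i)v_i+L_i(u_i)\nabla q_i(u_i)=v_i$, using $q_i(u_i)=1$ and $L_i(u_i)=0$. Evaluating at $u_j$ with $j\neq i$ gives $\nabla p_i(u_j)=q_i(u_j)v_i+L_i(u_j)\nabla q_i(u_j)=0$, using $q_i(u_j)=0$ and $\nabla q_i(u_j)=0$. Summing over $i$ then yields $\nabla p(u_k)=v_k$ for each $k$, as desired.

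The only idea that needs to be spotted is the use of \emph{squared} affine factors, so that $q_i$ vanishes to second rather than first order at the off-diagonal nodes; this is exactly what kills the unwanted $L_i\,\nabla q_i$ contribution at those points. Everything else is routine bookkeeping with the product rule, and the argument never uses distinctness of the $v_i$ (nor fails when some $v_i=0$), so it applies verbatim under the stated hypotheses.
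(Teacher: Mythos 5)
Your proof is correct, but it takes a genuinely different route from the paper's. You build a Hermite-type interpolant directly: for each node $u_i$ you multiply the affine function $L_i(x)=\langle v_i,x-u_i\rangle$ (whose gradient is constantly $v_i$ and which vanishes at $u_i$) by a ``bump'' polynomial $q_i=\prod_{j\neq i}\ell_{ij}^2$ built from squared affine factors, so that $q_i(u_i)=1$ while $q_i$ and $\nabla q_i$ both vanish at every other node; the product rule then kills all unwanted contributions, and summing over $i$ finishes the argument. The paper instead first treats the special case in which, for each coordinate $k$, the values $(u_1)_k,\ldots,(u_n)_k$ are distinct: there it takes $p(x)=\sum_k p_k(x_k)$ with each $p_k'$ obtained by univariate Lagrange interpolation of the $k$th components of the $v_i$, and it handles the general case by a generic invertible linear change of variables $A$ (pulling back via the chain rule, with the targets transformed by $(A^\top)^{-1}$). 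Your construction is more self-contained --- it avoids the genericity argument and the chain-rule bookkeeping entirely, and the key observation (squaring the affine factors to force second-order vanishing at the off-diagonal nodes) is exactly what makes it work; the paper's construction, on the other hand, reduces everything to classical one-variable interpolation and incidentally yields a polynomial of lower degree (about $n$ rather than your $2n-1$), though no degree bound is needed for the proposition. Both arguments are complete and correct.
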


\begin{proposition}
\label{prop.quotient immersion}
Let $G$ be a finite group acting smoothly and freely on a smooth manifold $M$, and consider the quotient map $\pi\colon M\to M/G$ defined by $\pi(x)=[x]$.
\begin{itemize}
\item[(a)]
$M/G$ is a topological manifold of the same dimension as $M$.
\item[(b)]
There exists a unique smooth structure on $M/G$ for which $\pi$ is a smooth submersion.
\item[(c)]
For any $G$-invariant smooth immersion $g\colon M\to\mathbb{R}^n$, the corresponding map $g^\downarrow\colon M/G\to\mathbb{R}^n$ is a smooth immersion.
\end{itemize}
If in addition $M$ is a connected Riemannian manifold and $G$ acts by isometries\footnote{{In Riemannian geometry, an isometry is a smooth map between Riemannian manifolds that preserves the Riemannian metric tensor, whereas in this paper, an isometry is any map between metric spaces that preserves the distance.
Thankfully, these notions coincide for maps between Riemannian manifolds by the Myers--Steenrod Theorem.}}, then the following also hold:
\begin{itemize}
\item[(d)]
$M/G$ has a unique Riemannian metric tensor such that $\pi$ is a Riemannian covering.
\item[(e)]
The Riemannian distance on $M/G$ coincides with the quotient metric $d_{M/G}$.
\end{itemize}
\end{proposition}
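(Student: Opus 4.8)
The plan is to recognize Proposition~\ref{prop.quotient immersion} as a bundle of standard facts about quotients by finite groups acting freely, and to assemble it from covering-space theory together with the theory of Riemannian submersions. The single structural observation that drives everything is that a finite group $G$ acting freely on a (Hausdorff, second countable) manifold $M$ acts \emph{properly discontinuously}: for each $x\in M$ the orbit $G\cdot x$ is a finite set of distinct points, so using Hausdorffness one can choose an open $U\ni x$ with $gU\cap hU=\emptyset$ for all distinct $g,h\in G$. Then $\pi\colon M\to M/G$ is an open continuous surjection restricting to a homeomorphism on each such $U$; that is, $\pi$ is a covering map. Much of the statement can alternatively be quoted verbatim from standard references (e.g., Lee's \emph{Introduction to Smooth Manifolds} and \emph{Introduction to Riemannian Manifolds}), since a finite free action is automatically smooth, proper, and free.

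For (a), the covering property immediately makes $M/G$ locally Euclidean of dimension $\dim M$ and second countable, while Hausdorffness follows because the orbit relation $\{(x,gx):g\in G\}$ is a finite union of graphs of homeomorphisms, hence closed in $M\times M$. For (b), I would push forward the charts of $M$ through the local homeomorphisms $\pi|_U$ above; on a connected component of an overlap, two such descended charts differ by $\psi\circ g\circ\varphi^{-1}$ for a \emph{single} $g\in G$, which is smooth since $G$ acts smoothly, so the descended charts form a smooth atlas in which $\pi$ is a local diffeomorphism and hence a smooth submersion. Uniqueness holds because any smooth structure making $\pi$ a submersion forces $\pi$ to be a local diffeomorphism (equal dimensions), and a covering local diffeomorphism determines the downstairs smooth structure. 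For (c), $g^\downarrow$ is smooth by the universal property of the surjective submersion $\pi$ (it is constant on the fibers of $\pi$), and differentiating $g=g^\downarrow\circ\pi$ gives $Dg(x)=Dg^\downarrow([x])\circ D\pi(x)$ with $D\pi(x)$ invertible, so $Dg^\downarrow([x])$ is injective whenever $Dg(x)$ is.

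Now assume $M$ connected Riemannian with $G$ acting by isometries. For (d), define the metric tensor on $M/G$ by transporting the one on $M$ through the local diffeomorphism $\pi$: set $\langle\xi,\eta\rangle_{[x]}:=\langle (D\pi(x))^{-1}\xi,(D\pi(x))^{-1}\eta\rangle_x$. Differentiating $\pi\circ g=\pi$ yields $D\pi(gx)^{-1}=Dg(x)\circ D\pi(x)^{-1}$, and since $Dg(x)$ is a linear isometry on tangent spaces, the value is independent of the chosen lift; it is smooth because locally it is the pushforward under $\pi|_U$, and by construction $\pi$ is a local isometry, hence a Riemannian covering. Uniqueness is immediate since $D\pi(x)$ is an isomorphism that must be an isometry of inner product spaces. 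For (e), write $\rho$ for the induced Riemannian distance on $M/G$, and recall that because $G$ acts by isometries the quotient metric simplifies to $d_{M/G}([x],[y])=\inf_{p\sim x,\ q\sim y} d_M(p,q)$, where $d_M$ is the Riemannian distance on $M$. The bound $\rho\le d_{M/G}$ follows by projecting: any piecewise-smooth path in $M$ from $p\sim x$ to $q\sim y$ maps under the local isometry $\pi$ to a path in $M/G$ of equal length. The bound $d_{M/G}\le\rho$ follows by lifting: a piecewise-smooth path in $M/G$ from $[x]$ to $[y]$ lifts (path lifting for the covering $\pi$, and piecewise-smooth since $\pi$ is a local diffeomorphism) to a path in $M$ starting at $x$ and ending in $\pi^{-1}(\{[y]\})=G\cdot y$, of equal length, so $d_M(x,\cdot)$ is at most the original length; taking infima gives equality.

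The only places requiring genuine care, rather than rote verification, are the well-definedness checks for the descended smooth and Riemannian structures—in particular, exploiting connectedness of overlaps so that the relevant transition maps and derivative identities involve a single group element—and confirming that the Himmelberg-style simplification of $d_{M/G}$ is legitimately available here, which it is since a Riemannian isometry is a metric-space isometry (Myers--Steenrod, as the footnote records). None of this is deep, so I expect the appendix proof to be a careful but routine assembly of the above, possibly abbreviated by citing the standard references for parts (a)--(d).
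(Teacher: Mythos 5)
Your proposal is correct and follows essentially the same route as the paper: the paper notes the finite action is proper and then cites Lee's quotient manifold theorem for (a)--(b), proves (c) by the same chain-rule argument with $D\pi(x)$ a bijection, cites Lee for the descended metric in (d), and proves (e) by the same two-sided argument of projecting curves (so $\pi$ is length-preserving, hence $1$-Lipschitz) and lifting curves through the Riemannian covering (citing an exercise in Hamilton for the lift). The only difference is that you sketch direct proofs of the covering-space and metric-descent facts that the paper simply quotes from the standard references.
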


\begin{proposition}
\label{prop.embedding is bilipschitz}
For Riemannian manifolds $X$ and $Y$ with $X$ compact, every smooth embedding $X\to Y$ is bilipschitz.
\end{proposition}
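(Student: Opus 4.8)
The plan is to prove the upper and lower Lipschitz bounds separately, after first reducing to the case of an isometric embedding. Write $\iota\colon X\to Y$ for the given smooth embedding and put $h:=\iota^{*}g_{Y}$, the pullback to $X$ of the metric tensor of $Y$; since $\iota$ is an immersion, $h$ is a genuine Riemannian metric on $X$. Because $X$ is compact, the continuous function $(p,v)\mapsto h_{p}(v,v)$ is bounded below by a positive constant and above by a finite constant on the (compact) unit sphere bundle of $(X,g_{X})$, so $c\,g_{X}\le h\le C\,g_{X}$ as quadratic forms for some $0<c\le C<\infty$. Comparing lengths of piecewise-smooth curves with fixed endpoints then gives $\sqrt{c}\,d_{g_{X}}(p,q)\le d_{h}(p,q)\le\sqrt{C}\,d_{g_{X}}(p,q)$ for all $p,q\in X$. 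Moreover, by construction $\iota$ is a Riemannian isometry from $(X,h)$ onto the embedded submanifold $\iota(X)\subseteq Y$ equipped with its induced metric, so $d_{h}(p,q)=d_{\iota(X)}(\iota(p),\iota(q))$, where $d_{\iota(X)}$ is the \emph{intrinsic} Riemannian distance within $\iota(X)$ (we take $X$, and hence $\iota(X)$, connected, as in our applications). It therefore suffices to produce $0<\alpha\le\beta<\infty$ with $\alpha\,d_{\iota(X)}\le d_{Y}\le\beta\,d_{\iota(X)}$ on $\iota(X)$.

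The upper bound holds with $\beta=1$: a piecewise-smooth curve lying in $\iota(X)$ is in particular a curve in $Y$ of the same length, and $d_{Y}$ is an infimum over a strictly larger family of curves, so $d_{Y}(\iota(p),\iota(q))\le d_{\iota(X)}(\iota(p),\iota(q))$. Unwinding the reduction gives $d_{Y}(\iota(p),\iota(q))\le\sqrt{C}\,d_{g_{X}}(p,q)$, the upper Lipschitz bound for $\iota$.

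The lower bound is the crux, and it is where compactness does the real work. First I would invoke the tubular neighborhood theorem for the compact submanifold $\iota(X)\subseteq Y$: there is $\varepsilon>0$, an open neighborhood $N$ of $\iota(X)$ in $Y$, and a smooth retraction $r\colon N\to\iota(X)$, obtained by composing the diffeomorphism $N\cong\{v\in\nu:|v|<\varepsilon\}$ (with $\nu$ the normal bundle of $\iota(X)$ and the diffeomorphism given by the normal exponential map $\exp^{\perp}$) with the bundle projection; compactness of $\iota(X)$ is what allows a uniform tube radius. Next, set $U:=\exp^{\perp}\{v\in\nu:|v|<\varepsilon/2\}$, an open neighborhood of $\iota(X)$ whose closure $\overline{U}$ is compact and contained in $N$. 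On the compact set $\overline{U}$ the operator norm of $Dr$ is bounded by some $L<\infty$, so $r$ scales the length of any piecewise-smooth curve contained in $\overline{U}$ by a factor at most $L$. Since $\iota(X)$ is compact and disjoint from the closed set $Y\setminus U$, we also have $\delta:=\tfrac12\,d_{Y}(\iota(X),Y\setminus U)>0$. Now fix $p,q\in X$ and an arbitrary piecewise-smooth curve $\gamma$ in $Y$ from $\iota(p)$ to $\iota(q)$. If $\gamma$ stays in $\overline{U}$, then $r\circ\gamma$ is a curve in $\iota(X)$ from $\iota(p)$ to $\iota(q)$, so $d_{\iota(X)}(\iota(p),\iota(q))\le L\cdot\operatorname{length}(\gamma)$. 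Otherwise $\gamma$ attains distance at least $2\delta$ from $\iota(X)$, so $\operatorname{length}(\gamma)\ge 2\delta\ge(2\delta/D)\,d_{\iota(X)}(\iota(p),\iota(q))$, where $D:=\operatorname{diam}(\iota(X),d_{\iota(X)})<\infty$ by compactness. In both cases $\operatorname{length}(\gamma)\ge\min\{1/L,\,2\delta/D\}\cdot d_{\iota(X)}(\iota(p),\iota(q))$; taking the infimum over $\gamma$ yields $d_{Y}(\iota(p),\iota(q))\ge\alpha_{0}\,d_{\iota(X)}(\iota(p),\iota(q))$ with $\alpha_{0}:=\min\{1/L,\,2\delta/D\}>0$. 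Combining this with $d_{g_{X}}\le c^{-1/2}d_{h}=c^{-1/2}d_{\iota(X)}$ gives the lower Lipschitz bound $\alpha:=\alpha_{0}\sqrt{c}>0$, whence $\operatorname{dist}(\iota)\le\beta/\alpha<\infty$.

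The main obstacle is precisely this lower bound --- controlling curves in $Y$ that stray away from $\iota(X)$ --- and the key device is the tubular neighborhood together with its retraction that scales lengths by a factor at most $L$, supplemented by the crude bound $\operatorname{length}(\gamma)\ge 2\delta$ for curves that leave the tube; compactness enters three times (uniform tube radius, finiteness of $L$, finiteness of $D$). I would also note that the hypothesis that $X$ is compact cannot be dropped: a smoothly embedded line in the plane can make arbitrarily long detours between points that are arbitrarily close in $\mathbb{R}^{2}$, so that its intrinsic and ambient distances fail to be bilipschitz equivalent.
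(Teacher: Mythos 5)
Your proof is correct, and while it rests on the same central device as the paper's argument---the tubular neighborhood of the compact image given by the normal exponential map, together with the smooth retraction whose differential is bounded on a compact half-tube---the way you finish the lower bound is genuinely different. The paper first proves a \emph{local} lower Lipschitz bound by showing the inverse map is locally Lipschitz: it needs to know that the ambient distance between nearby points of the image is realized inside the compact half-tube $K$, which it gets from geodesically convex neighborhoods (Theorem~6.17 in Lee) plus the Lebesgue number lemma, and it then handles far-apart pairs by a separate compactness argument (taking the supremum of $d_X(p,q)/d_Y(f(p),f(q))$ over the compact set of pairs with $d_X(p,q)\geq\epsilon$). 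You instead prove a single global inequality $d_Y\geq\alpha_0\,d_{\iota(X)}$ on the image by a curve dichotomy: a curve that stays in the half-tube is retracted, with length inflated by at most $L=\sup\|Dr\|$, while a curve that exits the tube has length at least $2\delta$, which you compare against the finite intrinsic diameter $D$. This bypasses geodesic convexity, the Lebesgue number lemma, and the local-to-global step entirely, at the modest cost of the extra constant $2\delta/D$ (and of the connectedness of $X$, which you rightly flag and which both proofs implicitly need, e.g.\ for the paper's shortest path from $p$ to $q$ to exist). Your reduction of the upper bound to the metric-tensor comparison $c\,g_X\leq\iota^*g_Y\leq C\,g_X$ is equivalent in substance to the paper's bound via $\sup_x\|Df(x)\|$, just packaged so that the whole problem becomes a comparison of intrinsic and extrinsic distances on the image; your closing example of an embedded line in the plane is a nice justification that compactness cannot be dropped.
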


\begin{proposition}
\label{prop.generic projection of semialgebraic manifold}
Given a $k$-dimensional {compact smooth} semialgebraic submanifold $M$ of $\mathbb{R}^n$ and $m>2k$, then for a generic linear map $L\colon\mathbb{R}^n\to\mathbb{R}^m$, the restriction $L|_M$ is a {smooth} embedding.
\end{proposition}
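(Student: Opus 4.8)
The plan is to run the classical Whitney secant-and-tangent argument, executed inside the semialgebraic category so that ``generic'' can be read as ``outside a semialgebraic subset of positive codimension.'' Identify the space of linear maps $\mathbb{R}^n\to\mathbb{R}^m$ with $\mathbb{R}^{nm}$. Since $M$ is compact, any injective immersion $M\to\mathbb{R}^m$ is automatically a smooth embedding, and moreover the set of $L$ for which $L|_M$ is an embedding is open in $\mathbb{R}^{nm}$, because the embeddings form a $C^1$-open subset of $C^1(M,\mathbb{R}^m)$ and $L\mapsto L|_M$ depends continuously (indeed linearly) on $L$. So I would set $B_1:=\{L:L|_M\text{ is not injective}\}$ and $B_2:=\{L:L|_M\text{ is not an immersion}\}$ and reduce the proposition to showing that $B_1\cup B_2$ is contained in a semialgebraic subset of $\mathbb{R}^{nm}$ of dimension strictly less than $nm$; then its complement is dense (and open), and every $L$ in it restricts to an injective immersion of the compact manifold $M$, hence to a smooth embedding.

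To control $B_1$, I would introduce the secant incidence set. Let $\Sigma:=(M\times M)\setminus\Delta$, which is semialgebraic of dimension $2k$, and put
\[
Z:=\big\{(x,y,L)\in\Sigma\times\mathbb{R}^{nm}:L(x-y)=0\big\}.
\]
This is semialgebraic, since the defining relation is polynomial in $(x,y,L)$. For each fixed $(x,y)\in\Sigma$ one has $x-y\neq0$, so $L\mapsto L(x-y)$ is a linear surjection onto $\mathbb{R}^m$ and the fiber of $Z$ over $(x,y)$ is a linear subspace of dimension $nm-m$; hence $\dim Z\leq 2k+(nm-m)$. Since $B_1$ is precisely the image of $Z$ under the projection to $\mathbb{R}^{nm}$, and a linear projection does not raise semialgebraic dimension (Tarski--Seidenberg), $B_1$ is semialgebraic with $\dim B_1\leq 2k+nm-m$, which is $<nm$ exactly because $m>2k$.

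To control $B_2$, I would use that the tangent bundle $TM\subseteq\mathbb{R}^n\times\mathbb{R}^n$ of the smooth semialgebraic manifold $M$ is again semialgebraic, so the unit tangent bundle $UM:=\{(x,v)\in TM:\|v\|=1\}$ is semialgebraic of dimension $2k-1$. Because $L$ is linear, its differential along $M$ is just the restriction of $L$ to tangent spaces, so $L|_M$ fails to be an immersion iff $Lv=0$ for some $(x,v)\in UM$. Forming $W:=\{(x,v,L)\in UM\times\mathbb{R}^{nm}:Lv=0\}$ and repeating the fiber computation gives $\dim W\leq(2k-1)+(nm-m)$, so $B_2$, the projection of $W$, is semialgebraic with $\dim B_2\leq 2k-1+nm-m<nm$. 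Combining, $B_1\cup B_2$ is semialgebraic of dimension $<nm$, which is what the reduction in the first paragraph requires.

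I expect the only real work to lie in the semialgebraic bookkeeping: verifying that $TM$ (hence $UM$) is semialgebraic, that $\dim Z$ is bounded by the base dimension plus the maximal fiber dimension, and that the image of a semialgebraic set under a linear projection is semialgebraic of no larger dimension. These are all standard facts from real algebraic geometry; granting them, the two inequalities $2k+nm-m<nm$ and $2k-1+nm-m<nm$ close the argument. (As an alternative that avoids semialgebraicity of $TM$ altogether, if one is content with ``generic'' meaning ``full measure,'' Sard's theorem applied to the secant direction map on $\Sigma$ and the tangent direction map on $UM$ shows directly that $B_1\cup B_2$ has Lebesgue measure zero.)
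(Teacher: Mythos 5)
Your proposal is correct and takes essentially the same route as the paper: both form the incidence set $\{(w,L):Lw=0\}$ over the relevant secant/tangent directions (a set of dimension at most $2k$), bound its dimension by base plus fiber ($\leq 2k+m(n-1)$), project to the space of linear maps without increasing semialgebraic dimension, and use that an injective immersion of a compact manifold is a smooth embedding. The only cosmetic difference is that you handle the secant set and the (unit) tangent bundle separately, while the paper lumps the differences $x-y$ and the tangent vectors into a single set $W$ and cites a lemma of Dym--Gortler for the dimension bookkeeping.
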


\begin{proof}[Proof of Theorem~\ref{thm.poly free}]
{First, by Theorem~\ref{thm.bilipschitz requires nondifferentiability}(c), we have both (b)$\Rightarrow$(a) and (c)$\Rightarrow$(a).}

For (a)$\Rightarrow$(b), we first claim that for every $u\in S(\mathbb{R}^d)$, there exists a $G$-invariant polynomial map $p_u\colon\mathbb{R}^d\to\mathbb{R}^d$ such that $Dp_u(u)$ is invertible.
To see this, for each $i\in\{1,\ldots,d\}$, take any polynomial $q_{u,i}\colon\mathbb{R}^d\to\mathbb{R}$ such that $\nabla q_{u,i}(gu)=ge_i$ for all $g\in G$.
Such a polynomial exists by Proposition~\ref{prop.lagrange inperpolation}; indeed, $\{gu\}_{g\in G}$ are distinct since $G$ acts freely on $S(\mathbb{R}^d)$ by assumption.
Let $\overline{q}_{u,i}$ denote the $G$-invariant polynomial map obtained by applying the Reynolds operator to $q_{u,i}$.
Then
\[
\nabla\overline{q}_{u,i}(x)^\top
=\frac{1}{|G|}\sum_{g\in G}\nabla(q_{u,i}\circ g)(x)^\top
=\frac{1}{|G|}\sum_{g\in G}\nabla q_{u,i}(gx)^\top g.
\]
Evaluating at $x=u$ then gives
\[
\nabla\overline{q}_{u,i}(u)^\top
=\frac{1}{|G|}\sum_{g\in G}\nabla q_{u,i}(gu)^\top g
=\frac{1}{|G|}\sum_{g\in G} (ge_i)^\top g
=e_i^\top.
\]
Then taking $p_u(x):=(\overline{q}_{u,1}(x),\ldots,\overline{q}_{u,d}(x))$ gives $Dp_u(u)=\operatorname{id}$, as desired.

Next, by the continuity of $x\mapsto\operatorname{det}(Dp_u(x))$, we have that $Dp_u(x)$ is invertible on an open neighborhood $N_u$ of $u$.
By compactness, the open cover $\{N_u\}_{u\in S(\mathbb{R}^d)}$ of $S(\mathbb{R}^d)$ has a finite subcover $\{N_u\}_{u\in F}$.
Then the polynomial map $r_1\colon x\mapsto\{p_u(x)\}_{u\in F}$ has the property that $Dr_1(x)$ is injective for every $x\in S(\mathbb{R}^d)$.
Next, let $r_2$ denote any polynomial $G$-invariant map that separates $G$-orbits as in Proposition~\ref{prop.generators separate}.
The polynomial map $r\colon S(\mathbb{R}^d)\to\mathbb{R}^n$ defined by $r(x)=(r_1(x),r_2(x))$ is an immersion due to the $r_1$ component, and so Proposition~\ref{prop.quotient immersion} gives that {$S(\mathbb{R}^d)/G$ is a smooth manifold and} $r^\downarrow$ is a {smooth} immersion.
Furthermore, the $r_2$ component ensures that $r^\downarrow$ is injective.
As such, $r^\downarrow$ is a {smooth} embedding {by Proposition~4.22 in~\cite{Lee:13}}.
Finally, the fact that $r^\downarrow$ is bilipschitz follows from Proposition~\ref{prop.embedding is bilipschitz}.
Specifically, to use this result, we note {two things.
First, since $G$ acts freely, $S(\mathbb{R}^d)/G$ is a Riemannian manifold by Proposition~\ref{prop.quotient immersion}.
Second,} the Euclidean {distance} on $S(\mathbb{R}^d)$ is equivalent to the standard Riemannian {distance}, and so the quotient Euclidean {distance} on $S(\mathbb{R}^d)/G$ is equivalent to the quotient Riemannian {distance}, which in turn {equals the} Riemannian {distance} on $S(\mathbb{R}^d)/G$ {by Proposition~\ref{prop.quotient immersion}}.

{Finally, for (a)$\Rightarrow$(c), since $G$ acts freely, we have that $S(\mathbb{R}^d)/G$ is a smooth manifold by Proposition~\ref{prop.quotient immersion}.
Next, the above shows that} we have a bilipschitz polynomial map $r^\downarrow\colon S(\mathbb{R}^d)/G\to \mathbb{R}^n$.
The lower Lipschitz bound ensures that $r^\downarrow$ is a {smooth} embedding.
As such, $M:=\operatorname{im}(r^\downarrow)$ is a compact {smooth} submanifold of $\mathbb{R}^n$ of dimension $d-1$.
Furthermore, $M$ is the image of the semialgebraic set $S(\mathbb{R}^d)$ under a polynomial map, and so it is semialgebraic.
By Proposition~\ref{prop.generic projection of semialgebraic manifold}, $L|_M$ is a {smooth} embedding for a generic linear map $L\colon\mathbb{R}^n\to\mathbb{R}^{2d-1}$, in which case $L\circ r^\downarrow$ is the desired map.
(Indeed, bilipschitzness follows from Proposition~\ref{prop.embedding is bilipschitz} as before.)
\end{proof}

While our proof of (a)$\Rightarrow$(b) in Theorem~\ref{thm.poly free} was not constructive, the following result provides a construction in the special case where $G$ is abelian.
Considering $G\leq O(d)\leq U(d)$, we may focus on the complex case without loss of generality.
Then the spectral theorem affords $\mathbb{C}^d$ with an orthonormal basis that simultaneously diagonalizes every element of $G$, as in the hypotheses below.

\begin{theorem}
\label{thm.constructive invariants in abelian case}
Given a finite abelian $G\leq U(d)$, choose any orthonormal basis $\{u_i\}_{i=1}^d$ of $\mathbb{C}^d$ for which there exists characters $\{\chi_i\}_{i=1}^d$ of $G$ such that $gu_i=\chi_i(g)u_i$ for all $g\in G$ and $i\in\{1,\ldots,d\}$.
The following are equivalent:
\begin{itemize}
\item[(a)]
$G$ acts freely on $S(\mathbb{C}^d)$.
\item[(b)]
For each $i\in\{1,\ldots,d\}$, the character $\chi_i$ is an isomorphism of $G$ onto its image. 
\item[(c)]
For each $i,j\in\{1,\ldots,d\}$, there exists a smallest integer $m_{ij}\geq0$ for which $\chi_i\chi_j^{m_{ij}}=1$.
\end{itemize}
Furthermore, when (c) holds, the polynomial map $f\colon \mathbb{C}^d\to\mathbb{C}^{d\times d}$ defined by
\[
f(x)=\{\langle u_i,x\rangle \langle u_j,x\rangle^{m_{ij}}\}_{i,j=1}^d
\]
is $G$-invariant and $f^\downarrow|_{S(\mathbb{C}^d)/G}$ is bilipschitz.
\end{theorem}

Notably, (the proof of) Proposition~5.2.1 in~\cite{Dufresne:08} establishes that the related map 
\[
p(x)=\{\langle u_i,x\rangle \langle u_j,x\rangle^{m_{ij}}\}_{i\leq j}
\]
is $G$-invariant and $p^\downarrow$ is injective.
Later, \cite{CahillCC:20} established that while $p^\downarrow|_{S(\mathbb{C}^d)/G}$ is Lipschitz, when $d\geq3$ and $|G|\geq3$, this function is \textit{not} lower Lipschitz.
With this context, Theorem~\ref{thm.constructive invariants in abelian case} can be interpreted as making the injective map $p^\downarrow|_{S(\mathbb{C}^d)/G}$ lower Lipschitz by including additional coordinate functions for $i>j$.

\begin{proof}[Proof of Theorem~\ref{thm.constructive invariants in abelian case}]
For (a)$\Rightarrow$(b), suppose $G$ acts freely on $S(\mathbb{C}^d)$.
Then $1$ is not an eigenvalue of any nonidentity $g\in G$.
It follows that for each $i$, the character $\chi_i$ has trivial kernel.
Then the first isomorphism theorem gives $G\cong \operatorname{im}(\chi_i)$.

For (b)$\Rightarrow$(c), observe that $G$ is cyclic, and given a generator $h$ of $G$, then for each $i$, $\chi_i(h)$ is a primitive $|G|$th root of unity.
Fix $i$ and $j$.
Then there exists an integer $m\geq0$ such $\chi_j(h)^{m} = \chi_i(h)^{-1}$, and so $\chi_i(h^k)\chi_j(h^k)^{m}=(\chi_i(h)\chi_j(h)^{m})^k=1$ for every $k$.
The claim then follows from the least integer principle.

For (c)$\Rightarrow$(a), consider any $g\in G$ for which there exists $x\in S(\mathbb{C}^d)$ such that $gx=x$.
Then $1$ is an eigenvalue of $g$, meaning there exists $j$ such that $\chi_j(g)=1$.
Our assumption then gives $\chi_i(g)=\chi_i(g)\chi_j(g)^{m_{ij}}=1$ for all $i$.
Thus, $g=\operatorname{id}$, i.e., $G$ acts freely on $S(\mathbb{C}^d)$.

Finally, suppose (c) holds.
Then for every $g\in G$, we have
\begin{align*}
f(gx)_{ij}
&=\langle u_i,gx\rangle\langle u_j,gx\rangle^{m_{ij}}
=\langle g^{-1}u_i,x\rangle\langle g^{-1}u_j,x\rangle^{m_{ij}}\\
&=\langle \overline{\chi_i(g)}u_i,x\rangle \langle\overline{\chi_j(g)}u_j,x\rangle^{m_{ij}}
=\chi_i(g)\chi_j(g)^{m_{ij}}\cdot\langle u_i,x\rangle \langle u_j,x\rangle^{m_{ij}}
=f(x)_{ij}.
\end{align*}
Thus, $f$ is $G$-invariant.
It remains to show that $f^\downarrow|_{S(\mathbb{C}^d)/G}$ is bilipschitz.
By Propositions~\ref{prop.quotient immersion} and~\ref{prop.embedding is bilipschitz}, it suffices to show that $f^\downarrow|_{S(\mathbb{C}^d)/G}$ is injective and $f|_{S(\mathbb{C}^d)}$ is an immersion.
Injectivity follows from (the proof of) Proposition~5.2.1 in~\cite{Dufresne:08}.
To prove immersion, assume $\{u_i\}_{i=1}^d$ is the standard basis without loss of generality so that $f(x)=\{x_i x_j^{m_{ij}}\}_{i,j=1}^d$.
Fix $x\in S(\mathbb{C}^d)$, select an index $j$ at which $x_{j}\neq0$, and let $A$ denote the $d\times d$ submatrix of the (complex) Jacobian $Df(x)$ corresponding to row indices $\{(i,j)\}_{i=1}^d$.
Then $A=\operatorname{diag}u+ve_{j}^\top$, where
\[
u=\left[\begin{array}{c}
x_{j}^{m_{1j}}\\
\vdots\\
x_{j}^{m_{dj}}
\end{array}\right],
\qquad
v=\left[\begin{array}{c}
m_{1j}x_1x_{j}^{m_{1j}-1}\\
\vdots\\
m_{dj}x_dx_{j}^{m_{dj}-1}
\end{array}\right].
\]
The matrix determinant lemma gives
\[
\operatorname{det}(A)
=\big(1+e_{j}^\top(\operatorname{diag}u)^{-1}v\big)\cdot\operatorname{det}(\operatorname{diag}u)
=(1+m_{jj})\cdot x_j^{\sum_i m_{ij}}
\neq0,
\]
and so $Df(x)$ is injective.
Then the corresponding real Jacobian $\mathbb{R}^{2d}\to\mathbb{R}^{2d^2}$ is also injective, as desired.
\end{proof}

\section{Maps of minimum distortion}
\label{sec.minimum distortion}

The \textbf{Euclidean distortion} of a metric space $X$, denoted by $c_2(X)\in[1,\infty]$, is the infimum of $c$ for which there exists a Hilbert space $H$ and a bilipschitz map $X\to H$ of distortion $c$.
In particular, $c_2(X)<\infty$ if and only if $X$ admits a bilipschitz embedding into some Hilbert space.
Euclidean distortion is finitely determined:

\begin{proposition}
\label{prop.euclidean distortion is finitely determined}
It holds that $c_2(X)=\sup\{c_2(F):F\subseteq X,|F|<\infty\}$.
\end{proposition}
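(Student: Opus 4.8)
The plan is to prove the two inequalities separately. The bound $\sup\{c_2(F):F\subseteq X,\ |F|<\infty\}\le c_2(X)$ is immediate from monotonicity: restricting a bilipschitz map $X\to H$ to a finite subset $F$ can only raise its optimal lower Lipschitz bound and lower its optimal upper Lipschitz bound, hence cannot increase the distortion; taking the infimum over embeddings of $X$ and then the supremum over $F$ gives the claim. (When $|F|<2$ we invoke the convention $c_2(F)=1\le c_2(X)$.) For the reverse inequality, set $c:=\sup\{c_2(F):F\subseteq X,\ |F|<\infty\}$; we may assume $c<\infty$, since otherwise there is nothing to prove, and we may assume $X$ is infinite, since the finite case is trivial.

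Fix $\epsilon>0$ and a basepoint $p\in X$. For each finite $F\subseteq X$ with $p\in F$, choose a bilipschitz map $f_F\colon F\to H_F$ into a Hilbert space with distortion at most $c+\epsilon$; after rescaling we may assume its optimal lower Lipschitz bound is $1$, so that $d(x,y)\le\|f_F(x)-f_F(y)\|_{H_F}\le(c+\epsilon)\,d(x,y)$ for all $x,y\in F$, and after translating we may assume $f_F(p)=0$, so that $\|f_F(x)\|_{H_F}\le(c+\epsilon)\,d(x,p)$ for $x\in F$. Extend each $f_F$ by zero on $X\setminus F$ (and set $f_F\equiv 0$, $H_F:=\{0\}$ when $p\notin F$); then $\|f_F(x)\|_{H_F}\le(c+\epsilon)\,d(x,p)$ for \emph{every} finite $F$ and every $x\in X$, so the family $(f_F(x))_F$ is bounded. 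The idea is now to glue the $f_F$ together by an ultralimit. Let $\mathcal{F}$ denote the set of finite subsets of $X$, and fix an ultrafilter $\mathcal{U}$ on $\mathcal{F}$ containing every up-set $\{F\in\mathcal{F}:F\supseteq F_0\}$, $F_0\in\mathcal{F}$ (such $\mathcal{U}$ exists since these up-sets have the finite intersection property). Form the Banach-space ultraproduct $H:=\bigl(\prod_F H_F\bigr)_{\mathcal{U}}$ of bounded families modulo the families with $\lim_{\mathcal{U}}\|\cdot\|_{H_F}=0$, equipped with the norm $\|[(\xi_F)_F]\|=\lim_{\mathcal{U}}\|\xi_F\|_{H_F}$; since each $H_F$ is a Hilbert space and the parallelogram identity is preserved under $\lim_{\mathcal{U}}$, $H$ is again a Hilbert space. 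Define $\tilde f\colon X\to H$ by $\tilde f(x):=[(f_F(x))_F]$. For any $x,y\in X$, the set $\{F:x,y,p\in F\}$ lies in $\mathcal{U}$, and on it $\|f_F(x)-f_F(y)\|_{H_F}\in[\,d(x,y),\,(c+\epsilon)\,d(x,y)\,]$; since an ultralimit of numbers in a closed interval stays in that interval, $d(x,y)\le\|\tilde f(x)-\tilde f(y)\|_H\le(c+\epsilon)\,d(x,y)$. Thus $\tilde f$ is bilipschitz with distortion at most $c+\epsilon$, so $c_2(X)\le c+\epsilon$; letting $\epsilon\downarrow 0$ yields $c_2(X)\le c$.

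The main obstacle is the gluing step: for a general (possibly nonseparable) metric space, a naive diagonal/subsequence argument is unavailable, so one passes to an ultraproduct, and the points needing care are (i) normalizing and recentering the $f_F$ so that the relevant families are genuinely bounded (so the ultralimit exists and is unambiguous), and (ii) verifying that the ultraproduct of Hilbert spaces is again a Hilbert space. When $X$ is separable, one can instead avoid ultrafilters by taking a countable dense subset, exhausting it by finite sets $F_1\subseteq F_2\subseteq\cdots$, and extracting a pointwise-convergent subsequence of the (recentered, hence uniformly bounded) images via a diagonal argument, then extending by uniform continuity.
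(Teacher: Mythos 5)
Your proof is correct, and at bottom it uses the same mechanism as the paper: the easy direction by restriction, and the hard direction by assembling near-optimal embeddings of finite subsets inside an ultraproduct of Hilbert spaces. The difference is in execution. The paper first observes that each finite embedding may be taken into $\ell^2$ (the image spans a finite-dimensional subspace), then outsources the gluing to two citations: Theorem~4.6 of \cite{GarciaG:22} to embed $X$ into an ultrapower of $\ell^2$ with distortion at most $c$, and Theorem~3.3(ii) of \cite{Heinrich:80} to conclude that this ultrapower is again a Hilbert space. You instead carry out the construction by hand: indexing by the directed family of finite subsets, fixing an ultrafilter containing all up-sets, normalizing each $f_F$ to have lower Lipschitz bound $1$ and recentering at a basepoint so that the families $(f_F(x))_F$ are bounded, forming the ultraproduct of the varying targets $H_F$ (Hilbert because the parallelogram law survives ultralimits), and reading off the bilipschitz bounds from ultralimits over the sets $\{F : x,y,p\in F\}\in\mathcal{U}$. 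This buys a self-contained argument — in effect you re-prove the relevant special case of the cited finite-representability theorem and the Hilbert-space stability of ultraproducts — at the cost of the normalization/recentering bookkeeping; the paper's version is shorter but leans on external results. Your treatment of the degenerate cases ($|F|<2$, $p\notin F$) and of the boundedness needed for the ultralimit to be well defined is sound.
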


\begin{proof}
The inequality $\geq$ holds since restricting a bilipschitz function over $X$ to $F$ can only improve the bilipschitz constants.
For the other direction, suppose that for every finite $F\subseteq X$ and every $\epsilon>0$, one can embed $F$ into a Hilbert space with distortion at most $c+\epsilon$.
Then the span of the image of this embedding has dimension at most $|F|<\infty$, and so the Hilbert space can be taken to be $\ell^2$ without loss of generality.
Then $X$ embeds into an ultra power $H$ of $\ell^2$ with distortion at most $c$ {by Theorem~4.6 in~\cite{GarciaG:22}}.
Furthermore, $H$ is a Hilbert space by Theorem~3.3(ii) in~\cite{Heinrich:80}.
\end{proof}

In fact, Proposition~\ref{prop.euclidean distortion is finitely determined} can be strengthened as follows:

\begin{lemma}
\label{lem.euclidean distortion dense}
Given a dense subset $Y$ of a metric space $X$, it holds that
\[
c_2(X)=\sup\{c_2(F):F\subseteq Y,|F|<\infty\}.
\]
\end{lemma}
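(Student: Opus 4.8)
The plan is to combine Proposition~\ref{prop.euclidean distortion is finitely determined} with a short perturbation argument. Write $c:=\sup\{c_2(F):F\subseteq Y,\ |F|<\infty\}$; we may assume $X$, and hence by density $Y$, has at least two points, so that $c\geq 1$. The inequality $c_2(X)\geq c$ is immediate: every finite $F\subseteq Y$ is a finite subset of $X$, and restricting a bilipschitz embedding of $X$ to $F$ cannot increase its distortion, so $c_2(F)\leq c_2(X)$. For the reverse inequality we may assume $c<\infty$, and by Proposition~\ref{prop.euclidean distortion is finitely determined} it suffices to show $c_2(F)\leq c$ for every finite $F\subseteq X$; moreover we may assume $|F|\geq 2$, as otherwise $c_2(F)=1\leq c$.

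So fix $F=\{x_1,\dots,x_n\}\subseteq X$ with $n\geq 2$, set $m:=\min_{i\neq j}d_X(x_i,x_j)>0$, and fix parameters $\eta\in(0,1)$ and $\epsilon>0$ to be sent to $0$ at the end. Using that $Y$ is dense, choose $y_i\in Y$ with $d_X(x_i,y_i)<\eta m/2$, so that $F':=\{y_1,\dots,y_n\}$ consists of $n$ distinct points of $Y$ and, by two applications of the triangle inequality together with $m\leq d_X(x_i,x_j)$,
\[
(1-\eta)\,d_X(x_i,x_j)\ \leq\ d_X(y_i,y_j)\ \leq\ (1+\eta)\,d_X(x_i,x_j)\qquad(i\neq j).
\]
Since $F'\subseteq Y$ is finite, $c_2(F')\leq c$, so there is a Hilbert space $H$ and a bilipschitz map $\phi\colon F'\to H$ of distortion at most $c+\epsilon$; because $F'$ has at least two points its lower Lipschitz bound is positive, so after rescaling $\phi$ (which keeps the codomain a Hilbert space) we may assume $\phi$ has lower Lipschitz bound $1$ and upper Lipschitz bound at most $c+\epsilon$.

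The last step transfers $\phi$ back to $F$: define $\psi\colon F\to H$ by $\psi(x_i):=\phi(y_i)$. For $i\neq j$, chaining the displayed estimates with the Lipschitz bounds of $\phi$ gives
\[
(1-\eta)\,d_X(x_i,x_j)\ \leq\ d_X(y_i,y_j)\ \leq\ \|\psi(x_i)-\psi(x_j)\|\ \leq\ (c+\epsilon)\,d_X(y_i,y_j)\ \leq\ (c+\epsilon)(1+\eta)\,d_X(x_i,x_j),
\]
so $\psi$ has distortion at most $(c+\epsilon)(1+\eta)/(1-\eta)$, and hence $c_2(F)\leq (c+\epsilon)(1+\eta)/(1-\eta)$. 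Letting $\epsilon\to 0$ and $\eta\to 0$ yields $c_2(F)\leq c$, and then Proposition~\ref{prop.euclidean distortion is finitely determined} gives $c_2(X)\leq c$, completing the proof. The only delicate point is calibrating the perturbation radius to the minimum pairwise distance $m$ within $F$: this is exactly what prevents the lower Lipschitz bound from degenerating under the perturbation, and everything else is bookkeeping with the triangle inequality.
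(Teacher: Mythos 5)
Your proof is correct and follows essentially the same route as the paper: both reduce to finite subsets via Proposition~\ref{prop.euclidean distortion is finitely determined}, use density to replace a finite $F\subseteq X$ by a nearby finite subset of $Y$ whose correspondence map has distortion close to $1$, and compose with a near-optimal embedding of that subset before taking limits. Your explicit calibration of the perturbation radius against the minimum pairwise distance is just a spelled-out version of the paper's ``close enough so that $g$ has distortion at most $\gamma$'' step.
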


\begin{proof}
By Proposition~\ref{prop.euclidean distortion is finitely determined}, it suffices to prove
\[
\sup\{c_2(E):E\subseteq X,|F|<\infty\}
=\sup\{c_2(F):F\subseteq Y,|F|<\infty\}.
\]
The inequality $\geq$ follows from the containment $Y\subseteq X$.
For the reverse inequality, fix $\gamma>1$.
Since $Y$ is dense in $X$, for each $E:=\{x_1,\ldots,x_m\}\subseteq X$, we may select $F:=\{y_1,\ldots,y_m\}\subseteq Y$ close enough to $E$ so that $g\colon E\to F$ defined by $g\colon x_i\mapsto y_i$ has distortion at most $\gamma$.
Furthermore, there exists $f\colon F\to \ell^2$ of distortion at most $\gamma \cdot c_2(F)$.
Then
\[
c_2(E)
\leq\operatorname{dist}(f\circ g)
\leq\operatorname{dist}(f)\operatorname{dist}(g)
\leq\gamma^2 \cdot c_2(F).
\]
The desired bound follows by taking the supremum of both sides and recalling that $\gamma>1$ was arbitrary.
\end{proof}

Proposition~\ref{prop.euclidean distortion is finitely determined} focuses our attention to finite metric spaces, in which case~\cite{LinialLR:95} observed that Euclidean distortion can be computed by semidefinite programming:

\begin{proposition}
\label{prop.sdp}
Given a finite metric space $X$, then $c_2(X)^2$ is the infimum of $t$ for which there exists a positive semidefinite matrix $Q\in\mathbb{R}^{X\times X}$ such that
\[
d_X(x,y)^2
\leq Q_{xx}-2Q_{xy}+Q_{yy}
\leq t\cdot d_X(x,y)^2
\qquad
\forall x,y\in X.
\]
\end{proposition}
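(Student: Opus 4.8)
The plan is to realize the Euclidean distortion of a finite metric space $X$ via the standard embedding of $X$ into the span of a Gram matrix. The key observation is that any map $f\colon X\to H$ into a Hilbert space is, up to translation, determined by the Gram matrix $Q_{xy} := \langle f(x),f(y)\rangle$ of its image, and conversely any positive semidefinite matrix $Q\in\mathbb{R}^{X\times X}$ arises this way: if $Q = R^\top R$ with $R\in\mathbb{R}^{X\times X}$, then taking $f(x)$ to be the $x$th column of $R$ gives a map into $\mathbb{R}^{|X|}\subseteq\ell^2$ with $\langle f(x),f(y)\rangle = Q_{xy}$. Under this correspondence, $\|f(x)-f(y)\|^2 = Q_{xx} - 2Q_{xy} + Q_{yy}$.

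First I would record the forward direction. Suppose $f\colon X\to H$ has lower and upper Lipschitz bounds $\alpha,\beta$, so $\alpha^2 d_X(x,y)^2 \le \|f(x)-f(y)\|^2 \le \beta^2 d_X(x,y)^2$ for all $x,y\in X$. We may assume $H$ is finite-dimensional (replace $H$ by the span of the image), hence isometrically a subspace of $\mathbb{R}^{|X|}$; set $Q_{xy} := \langle f(x),f(y)\rangle/\alpha^2$. Then $Q\succeq 0$, and dividing the displayed chain of inequalities by $\alpha^2$ gives $d_X(x,y)^2 \le Q_{xx} - 2Q_{xy} + Q_{yy} \le (\beta/\alpha)^2 d_X(x,y)^2 = \operatorname{dist}(f)^2\, d_X(x,y)^2$. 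Thus $t = \operatorname{dist}(f)^2$ is feasible, and taking the infimum over all bilipschitz $f$ shows the SDP optimal value is at most $c_2(X)^2$.

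For the reverse direction, suppose $Q\succeq 0$ and $t$ satisfy $d_X(x,y)^2 \le Q_{xx}-2Q_{xy}+Q_{yy} \le t\, d_X(x,y)^2$ for all $x,y$. Factor $Q = R^\top R$ and let $f(x)$ be the $x$th column of $R$, viewed as a point of $\ell^2$; then $\|f(x)-f(y)\|^2 = Q_{xx}-2Q_{xy}+Q_{yy}$, so the two inequalities say exactly that $f$ has lower Lipschitz bound $1$ and upper Lipschitz bound $\sqrt{t}$, hence distortion at most $\sqrt{t}$. Therefore $c_2(X)^2 \le t$, and taking the infimum over feasible $t$ gives $c_2(X)^2 \le$ (SDP optimal value). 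Combining the two directions yields equality. One should note that $c_2(X)$ is attained by some bilipschitz map (the feasible region is closed and, after normalizing $Q$, effectively compact), so the infimum in the definition of $c_2$ matches the infimum of feasible $t$; but even without attainment the two infima agree by the inequalities above.

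The only mild subtlety — and the closest thing to an obstacle — is the bookkeeping around scaling and the passage from an arbitrary Hilbert space to $\mathbb{R}^{|X|}$ (so that $Q$ genuinely lies in $\mathbb{R}^{X\times X}$): since $X$ is finite, the image of any $f$ spans a finite-dimensional subspace, which is isometric to some $\mathbb{R}^m\hookrightarrow\mathbb{R}^{|X|}$, so there is no loss. Everything else is the routine identity $\|f(x)-f(y)\|^2 = \langle f(x),f(x)\rangle - 2\langle f(x),f(y)\rangle + \langle f(y),f(y)\rangle$ together with the Cholesky-type factorization of positive semidefinite matrices.
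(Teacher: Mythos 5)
Your proposal is correct and follows essentially the same route as the paper's proof sketch: the correspondence between embeddings $f\colon X\to H$ and Gram matrices $Q_{xy}=\langle f(x),f(y)\rangle$, with the rescaling by $1/\alpha^2$ in one direction and a factorization $Q=R^\top R$ in the other. The extra bookkeeping you supply (finite-dimensionality of the span, the remark that attainment is not needed) just fills in details the paper leaves implicit.
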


\begin{proof}[Proof sketch]
Each embedding $f\colon X\to H$ determines $Q\succeq0$ defined by $Q_{xy}=\langle f(x),f(y)\rangle$, in which case $\|f(x)-f(y)\|^2=Q_{xx}-2Q_{xy}+Q_{yy}$.
Conversely, each $Q\succeq0$ determines an embedding $f\colon X\to \mathbb{R}^X$ up to post-composition by an orthogonal transformation.
\end{proof}

One may apply weak duality, as was done in~\cite{LinialLR:95}, to obtain the following result.
Here, we let $D_X\in\mathbb{R}^{X\times X}$ denote the matrix defined by $(D_X)_{xy}=d_X(x,y)^2$, while $Q_+$ and $Q_-$ denote the entrywise positive and negative parts of a matrix $Q$, respectively.

\begin{proposition}[Corollary~3.5 in~\cite{LinialLR:95}]
\label{prop.weak duality}
Given a finite metric space $X$, consider any bilipschitz map $f\colon X\to\ell^2$ and any positive semidefinite $Q\in\mathbb{R}^{X\times X}$ such that $Q1=0$.
Then
\[
\langle D_X,Q_+\rangle
\leq \operatorname{dist}(f)^2 \cdot \langle D_X,Q_-\rangle.
\]
Furthermore, if equality holds with $Q \neq 0$, then $\operatorname{dist}(f) = c_2(X)$.
\end{proposition}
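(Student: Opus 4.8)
The plan is to run the weak-duality computation of~\cite{LinialLR:95}, phrased via the Gram matrix of $f$. Write $\langle A,B\rangle:=\sum_{x,y\in X}A_{xy}B_{xy}$ for the entrywise inner product on $\mathbb{R}^{X\times X}$, and for symmetric $P$ let $L(P)$ be the matrix with $L(P)_{xy}=P_{xx}-2P_{xy}+P_{yy}$. Let $\alpha,\beta\in(0,\infty)$ be the optimal lower and upper Lipschitz bounds of $f$ (positive and finite since $f$ is bilipschitz and $X$ has at least two points), and set $P_{xy}:=\langle f(x),f(y)\rangle$. Then $P\succeq0$ and $L(P)_{xy}=\|f(x)-f(y)\|^2$, so that $\alpha^2(D_X)_{xy}\le L(P)_{xy}\le\beta^2(D_X)_{xy}$ for all $x,y$.

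The key identity I would record first: since $Q$ is symmetric with $Q\mathbf 1=0$ we also have $\mathbf 1^\top Q=0$, hence
\[
\langle L(P),Q\rangle=\sum_{x}P_{xx}(Q\mathbf 1)_x-2\langle P,Q\rangle+\sum_{y}P_{yy}(\mathbf 1^\top Q)_y=-2\langle P,Q\rangle\le 0,
\]
the last step using $\langle P,Q\rangle=\operatorname{tr}(PQ)\ge0$ for positive semidefinite $P,Q$. Splitting $Q=Q_+-Q_-$ entrywise gives $\langle L(P),Q_+\rangle\le\langle L(P),Q_-\rangle$. Since $Q_+$, $Q_-$, and $D_X$ all have nonnegative entries, the sandwich bound on $L(P)$ then yields
\[
\alpha^2\langle D_X,Q_+\rangle\le\langle L(P),Q_+\rangle\le\langle L(P),Q_-\rangle\le\beta^2\langle D_X,Q_-\rangle,
\]
and dividing by $\alpha^2$ gives $\langle D_X,Q_+\rangle\le(\beta/\alpha)^2\langle D_X,Q_-\rangle=\operatorname{dist}(f)^2\langle D_X,Q_-\rangle$, as claimed. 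I note that this computation uses nothing about $\ell^2$ beyond $P\succeq 0$, so it applies verbatim to any bilipschitz map from $X$ into any Hilbert space.

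For the equality statement, suppose $Q\ne0$ and $\langle D_X,Q_+\rangle=\operatorname{dist}(f)^2\langle D_X,Q_-\rangle$. I would first verify $\langle D_X,Q_-\rangle>0$: if it vanished, then so would $\langle D_X,Q_+\rangle$, and since $(D_X)_{xy}>0$ whenever $x\ne y$ this forces $(Q_+)_{xy}=(Q_-)_{xy}=0$ off the diagonal, i.e.\ $Q$ is diagonal; but a diagonal positive semidefinite matrix with $Q\mathbf 1=0$ is zero, contradicting $Q\ne0$. Now for any bilipschitz $g\colon X\to H$ into a Hilbert space, the inequality above (applied to $g$) gives $\langle D_X,Q_+\rangle\le\operatorname{dist}(g)^2\langle D_X,Q_-\rangle$, so $\operatorname{dist}(g)^2\ge\langle D_X,Q_+\rangle/\langle D_X,Q_-\rangle=\operatorname{dist}(f)^2$. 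Taking the infimum over $g$ gives $c_2(X)\ge\operatorname{dist}(f)$, and since trivially $c_2(X)\le\operatorname{dist}(f)$, we conclude $c_2(X)=\operatorname{dist}(f)$. The argument is essentially mechanical once the identity $\langle L(P),Q\rangle=-2\langle P,Q\rangle$ is in hand; the only genuinely necessary subtlety is ruling out $\langle D_X,Q_-\rangle=0$ in the equality case so that the division is legitimate.
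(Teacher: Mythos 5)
Your proof is correct and complete. The paper itself gives no proof of this proposition—it simply cites Corollary~3.5 of~\cite{LinialLR:95}—and your argument is exactly the standard weak-duality computation behind that citation: pass to the Gram matrix $P_{xy}=\langle f(x),f(y)\rangle$, use $Q\mathbf{1}=0$ and symmetry to get $\langle L(P),Q\rangle=-2\langle P,Q\rangle\le 0$, and combine with the entrywise sandwich $\alpha^2 D_X\le L(P)\le\beta^2 D_X$; since only $P\succeq 0$ is used, the bound holds for every Hilbert-space embedding, which gives the equality case. Your explicit check that $\langle D_X,Q_-\rangle>0$ when $Q\neq 0$ (a nonzero diagonal PSD matrix cannot satisfy $Q\mathbf{1}=0$) is a small but genuine detail that the citation leaves implicit, and it is handled correctly.
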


We take inspiration from the proof of Claim~2.1 in~\cite{LinialM:00} to prove the following:

\begin{theorem}
\label{thm.euclidean distortion of circle}
Take $X=\mathbb{R}/\mathbb{Z}$ with any translation-invariant metric $d$ for which the function
\[
g\colon (0,\tfrac{1}{2}]\to\mathbb{R},
\qquad
g(t)=\frac{|e^{2\pi i t}-1|}{d(t+\mathbb{Z},\mathbb{Z})}
\]
is monotonically decreasing.
Then $f\colon X\to\mathbb{C}$ defined by $f(t+\mathbb{Z})=e^{2\pi it}$ has distortion
\begin{equation}
\label{eq.distortion of circle}
\operatorname{dist}(f)
=\lim_{t\to0}\frac{g(t)}{g(\frac{1}{2})}
=c_2(X)
\leq\frac{\pi}{2}.
\end{equation}
\end{theorem}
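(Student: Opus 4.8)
The plan is to prove the chain in~\eqref{eq.distortion of circle} from left to right, with the semidefinite-programming weak duality of Proposition~\ref{prop.weak duality} as the key tool; the finite-subset argument parallels the treatment of even cycles in~\cite{LinialM:00}.

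\emph{Computing $\operatorname{dist}(f)$.} Since $d$ is translation-invariant and $|f(s+\mathbb{Z})-f(t+\mathbb{Z})|=|e^{2\pi i(s-t)}-1|$ depends only on $s-t$, the ratio $|f(s+\mathbb{Z})-f(t+\mathbb{Z})|/d(s+\mathbb{Z},t+\mathbb{Z})$ equals $g(u)$, where $u\in(0,\tfrac12]$ is the representative of $\pm(s-t)$ in that interval; this uses $|e^{2\pi iu}-1|=|e^{-2\pi iu}-1|$ together with $d(-u+\mathbb{Z},\mathbb{Z})=d(u+\mathbb{Z},\mathbb{Z})$, which follows from translation invariance and symmetry of $d$. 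Because $g$ is decreasing, the optimal upper and lower Lipschitz bounds of $f$ are $\beta=\lim_{t\to0^+}g(t)$ and $\alpha=g(\tfrac12)$, so $\operatorname{dist}(f)=\lim_{t\to0}g(t)/g(\tfrac12)$. Moreover $c_2(X)\le\operatorname{dist}(f)$ because $f$ lands in the Hilbert space $\mathbb{C}\cong\mathbb{R}^2$.

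\emph{The reverse inequality $c_2(X)\ge\operatorname{dist}(f)$.} For even $n\ge4$, consider the finite subset $F_n:=\tfrac1n\mathbb{Z}/\mathbb{Z}\subseteq X$; since restricting an embedding cannot increase distortion, $c_2(X)\ge c_2(F_n)$. Restricting $f$ itself gives $c_2(F_n)\le\operatorname{dist}(f|_{F_n})=g(\tfrac1n)/g(\tfrac12)$, again by monotonicity of $g$. For the matching lower bound I would feed Proposition~\ref{prop.weak duality} the circulant matrix $Q\in\mathbb{R}^{F_n\times F_n}$ given by $Q_{jk}=q((j-k)\bmod n)$ with $q(0)=1+\cos\tfrac{2\pi}{n}$, $q(\pm1)=-1$, $q(\tfrac n2)=1-\cos\tfrac{2\pi}{n}$, and $q\equiv0$ otherwise. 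A short computation of its discrete Fourier transform gives $\widehat q(\ell)=2-2\cos\tfrac{2\pi\ell}{n}\ge0$ for even $\ell$ and $\widehat q(\ell)=2\cos\tfrac{2\pi}{n}-2\cos\tfrac{2\pi\ell}{n}\ge0$ for odd $\ell$ (the latter because $\cos\tfrac{2\pi\ell}{n}\le\cos\tfrac{2\pi}{n}$ for $1\le\ell\le n-1$), so $Q\succeq0$, while $\widehat q(0)=0$ gives $Q\mathbf{1}=0$. Since $q(0)>0$ and $q(\tfrac n2)>0$ but $q(\pm1)<0$, the positive part $Q_+$ is supported on the diagonal and the antipodal pairs $(j,j+\tfrac n2)$, and $Q_-$ on the nearest-neighbor pairs $(j,j\pm1)$. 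Reading off $\langle D_{F_n},Q_+\rangle=n(1-\cos\tfrac{2\pi}{n})\,d(\tfrac12+\mathbb{Z},\mathbb{Z})^2$ and $\langle D_{F_n},Q_-\rangle=2n\,d(\tfrac1n+\mathbb{Z},\mathbb{Z})^2$, and using $1-\cos\tfrac{2\pi}{n}=\tfrac12|e^{2\pi i/n}-1|^2$, the weak-duality bound $c_2(F_n)^2\ge\langle D_{F_n},Q_+\rangle/\langle D_{F_n},Q_-\rangle$ simplifies to exactly $(g(\tfrac1n)/g(\tfrac12))^2$. Hence $c_2(F_n)=g(\tfrac1n)/g(\tfrac12)$, and letting $n\to\infty$ through even integers yields $c_2(X)\ge\lim_{t\to0}g(t)/g(\tfrac12)=\operatorname{dist}(f)$, so equality holds throughout.

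\emph{The bound $\le\pi/2$, and the main obstacle.} Chaining $n/2$ unit steps and invoking translation invariance gives $d(\tfrac12+\mathbb{Z},\mathbb{Z})\le\tfrac n2\,d(\tfrac1n+\mathbb{Z},\mathbb{Z})$, hence $g(\tfrac1n)=2\sin(\pi/n)/d(\tfrac1n+\mathbb{Z},\mathbb{Z})\le n\sin(\pi/n)/d(\tfrac12+\mathbb{Z},\mathbb{Z})\le\pi/d(\tfrac12+\mathbb{Z},\mathbb{Z})$; dividing by $g(\tfrac12)=2/d(\tfrac12+\mathbb{Z},\mathbb{Z})$ and sending $n\to\infty$ gives $\operatorname{dist}(f)\le\pi/2$ (and incidentally confirms $\beta<\infty$). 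I expect the one genuinely delicate step to be producing the dual certificate $Q$: its support is forced by complementary slackness — the diagonal and antipodal pairs are where $f$ attains its lower Lipschitz bound, the nearest-neighbor pairs where it attains its upper bound — and the scaling is then pinned down by $Q\mathbf{1}=0$ together with $Q\succeq0$, so verifying $\widehat q\ge0$ is the crux; everything else is bookkeeping.
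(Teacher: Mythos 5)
Your proposal is correct and follows essentially the same route as the paper: compute $\operatorname{dist}(f)$ from the monotonicity of $g$, restrict to the even cyclic subsets, apply the weak duality of Proposition~\ref{prop.weak duality} with a circulant dual certificate, and use the chained triangle inequality $d(\tfrac12+\mathbb{Z},\mathbb{Z})\leq\tfrac n2\,d(\tfrac1n+\mathbb{Z},\mathbb{Z})$ for the $\pi/2$ bound. Your matrix $Q$ is in fact identical to the paper's (since $1+\cos\tfrac{2\pi}{n}=2\cos^2\tfrac{\pi}{n}$ and $1-\cos\tfrac{2\pi}{n}=2\sin^2\tfrac{\pi}{n}$); the only cosmetic difference is that you verify $Q\succeq0$ by an explicit DFT computation, whereas the paper cites Claim~2.3 of~\cite{LinialM:00}.
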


\begin{proof}
{First, we show the first equality.
Take any $A,B\in X$ with $A\neq B$.
After swapping $A$ and $B$ as necessary, we may express $A=s+\mathbb{Z}$ and $B=t+\mathbb{Z}$ for some $s,t\in\mathbb{R}$ with $s-t\in(0,\frac{1}{2}]$.
Then
\[
\frac{|f(A)-f(B)|}{d(A,B)}
=\frac{|f(s+\mathbb{Z})-f(t+\mathbb{Z})|}{d(s+\mathbb{Z},t+\mathbb{Z})}
=\frac{|e^{2\pi i(s-t)}-1|}{d(s-t+\mathbb{Z},\mathbb{Z})}
=g(s-t).
\]
Since $g$ is monotonically decreasing, it follows that $f$ has optimal bilipschitz bounds
\[
\alpha
=g(\tfrac{1}{2})
\qquad
\text{and}
\qquad
\beta
=\lim_{t\to0}g(t)\in(0,\infty].
\]
Next,} given an even number $n\in\mathbb{N}$, the cyclic group $C:=\mathbb{Z}/n\mathbb{Z}$ enjoys {an} injective homomorphism $\phi\colon C \to X$, through which we may pull back $d$ to endow $C$ with a metric $\tilde{d}$.
Explicitly,
\[
\phi(k+n\mathbb{Z})
=\tfrac{k}{n}+\mathbb{Z}
\qquad
\text{and}
\qquad
\tilde{d}(k+n\mathbb{Z},\ell+n\mathbb{Z})
=d(\tfrac{k-\ell}{n}+\mathbb{Z},\mathbb{Z}).
\]
Then the optimal bilipschitz bounds of $f\circ\phi$ are obtained by minimizing and maximizing the quantity
\[
\frac{|(f\circ \phi)(k+n\mathbb{Z})-(f\circ \phi)(\ell+n\mathbb{Z})|}{\tilde{d}(k+n\mathbb{Z},\ell+n\mathbb{Z})}
=\frac{|e^{2\pi i(\frac{k-\ell}{n})}-1|}{d(\tfrac{k-\ell}{n}+\mathbb{Z},\mathbb{Z})}.
\]
In particular, our assumptions on $g$ and the fact that $n$ is even together imply that $f\circ\phi$ has optimal bilipschitz bounds
\[
g(\tfrac{1}{2})
=\frac{2}{d(\frac{1}{2}+\mathbb{Z},\mathbb{Z})},
\qquad
g(\tfrac{1}{n})
=\frac{2\sin(\frac{\pi}{n})}{d(\frac{1}{n}+\mathbb{Z},\mathbb{Z})}.
\]
{Furthermore, the triangle inequality gives $d(\frac{1}{2}+\mathbb{Z},\mathbb{Z})\leq\frac{n}{2}\cdot d(\frac{1}{n}+\mathbb{Z},\mathbb{Z})$, and so
\[
\frac{g(\frac{1}{n})}{g(\frac{1}{2})}
=\sin(\tfrac{\pi}{n})\cdot\frac{d(\frac{1}{2}+\mathbb{Z},\mathbb{Z})}{d(\frac{1}{n}+\mathbb{Z},\mathbb{Z})}
\leq\sin(\tfrac{\pi}{n})\cdot\tfrac{n}{2}
\leq\tfrac{\pi}{2}.
\]
Since $g$ is monotonically decreasing, we conclude that the limit in~\eqref{eq.distortion of circle} is at most $\frac{\pi}{2}$.
}

By identifying $C\cong\{0,\ldots,n-1\}$ with arithmetic modulo $n$, we define $Q\in\mathbb{R}^{C\times C}$ by
\[
Q_{ij}=
\left\{\begin{array}{cl}
2\cos^2(\tfrac{\pi}{n})&\text{if $i=j$}\\
-1&\text{if $i=j\pm 1$}\\
2\sin^2(\tfrac{\pi}{n})&\text{if $i=j+\frac{n}{2}$}\\
0&\text{otherwise.}
\end{array}\right.
\]
Then $Q1=0$ and $Q$ is positive semidefinite since its eigenvalues are nonnegative (this is verified in Claim~2.3 of~\cite{LinialM:00}).
Furthermore, since $D_C$ and $Q$ are both circulant, we have
\[
\langle D_C,Q_+\rangle
=n\cdot d(\tfrac{1}{2}+\mathbb{Z},\mathbb{Z})^2\cdot 2\sin^2(\tfrac{\pi}{n}),
\qquad
\langle D_C,Q_-\rangle
=n\cdot d(\tfrac{1}{n}+\mathbb{Z},\mathbb{Z})^2\cdot 2,
\]
and so
\[
\frac{\langle D_C,Q_+\rangle}{\langle D_C,Q_-\rangle}
=\frac{d(\tfrac{1}{2}+\mathbb{Z},\mathbb{Z})^2}{d(\tfrac{1}{n}+\mathbb{Z},\mathbb{Z})^2}\cdot\sin^2(\tfrac{\pi}{n})
=\bigg(\frac{g(\frac{1}{n})}{g(\frac{1}{2})}\bigg)^2
=\operatorname{dist}(f\circ\phi)^2.
\]
By Proposition~\ref{prop.weak duality}, it follows that $\operatorname{dist}(f\circ\phi)=c_2(C)$.
Since $\phi$ is an isometric embedding, the order-$n$ subgroup $X_n$ of $X$ also has Euclidean distortion $g(\frac{1}{n})/g(\frac{1}{2})$.
Overall, we have
\[
\lim_{t\to0}\frac{g(t)}{g(\frac{1}{2})}
=\frac{\beta}{\alpha}
=\operatorname{dist}(f)
\geq
c_2(X)
\geq\sup_{\substack{n\in\mathbb{N}\\n\text{ even}}} c_2(X_n)
=\sup_{\substack{n\in\mathbb{N}\\n\text{ even}}} \frac{g(\frac{1}{n})}{g(\frac{1}{2})}
=\lim_{t\to0}\frac{g(t)}{g(\frac{1}{2})},
\]
which implies the result.
\end{proof}

We may use Theorem~\ref{thm.euclidean distortion of circle} to establish that the embeddings in Section~\ref{sec:homogeneous extension} achieve the minimum possible distortion:

\begin{corollary}
\label{cor.c2 real phase retrieval}
Given a real Hilbert space $V$ of dimension at least $2$, consider the group $G:=\{ \pm\operatorname{id}\}\leq O(V)$.
Then $c_2(V/G)=\sqrt{2}$.
\end{corollary}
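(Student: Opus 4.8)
The plan is to reduce the statement to Theorem~\ref{thm.euclidean distortion of circle} via a two-sided argument. For the upper bound $c_2(V/G)\leq\sqrt2$, I would invoke Example~\ref{ex.real phase retrieval}: the homogeneous extension $f^\star(x)=x\otimes x/\|x\|$ (with $f^\star(0)=0$) is an explicit bilipschitz map $V/G\to V^{\otimes2}$ of distortion exactly $\sqrt2$, so $c_2(V/G)\leq\sqrt2$ immediately. The real content is the matching lower bound $c_2(V/G)\geq\sqrt2$.

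For the lower bound, the idea is to exhibit inside $V/G$ an isometric copy of a circle $\mathbb{R}/\mathbb{Z}$ equipped with a translation-invariant metric to which Theorem~\ref{thm.euclidean distortion of circle} applies with limiting distortion exactly $\sqrt2=\pi/2\cdot(2/\pi)$\,---\,wait, more precisely, a metric whose associated $g$ satisfies $\lim_{t\to0}g(t)/g(\tfrac12)=\sqrt2$. Concretely, fix a two-dimensional subspace of $V$, identify its unit circle $S^1$ with angles, and restrict to the great semicircle $\{(\cos\theta,\sin\theta):\theta\in[0,\pi)\}$, i.e.\ restrict $G=\{\pm\mathrm{id}\}$ acting on this circle; the quotient $S^1/\{\pm\mathrm{id}\}$ is again a circle, parametrized by $\theta\in[0,\pi)$, with quotient metric $d([\theta],[\theta'])=\min\{2\sin(\tfrac{\theta-\theta'}{2}),2\sin(\tfrac{\pi-(\theta-\theta')}{2})\}$ for $\theta-\theta'\in[0,\pi)$ (the chordal distance in $V$, minimized over $\pm$). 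Rescaling $\theta=\pi\phi$ so the quotient circle becomes $\mathbb{R}/\mathbb{Z}$, this is a translation-invariant metric $d$ on $\mathbb{R}/\mathbb{Z}$, and I would check that the corresponding $g(t)=|e^{2\pi it}-1|/d(t+\mathbb{Z},\mathbb{Z})$ is monotonically decreasing on $(0,\tfrac12]$\,---\,this is the hypothesis of Theorem~\ref{thm.euclidean distortion of circle}. Then Theorem~\ref{thm.euclidean distortion of circle} gives $c_2(\mathbb{R}/\mathbb{Z},d)=\lim_{t\to0}g(t)/g(\tfrac12)$, and a direct computation of this limit should yield $\sqrt2$. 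Since this circle embeds isometrically into $V/G$, Proposition~\ref{prop.euclidean distortion is finitely determined} (Euclidean distortion is finitely determined, hence monotone under isometric embedding) gives $c_2(V/G)\geq\sqrt2$.

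Let me double-check the key computation that pins down the constant. In the quotient circle with angle parameter $\theta\in[0,\pi)$, nearby points have $d([\theta],[0])=2\sin(\theta/2)$ for small $\theta$, which is $\approx\theta$; the antipodal-in-the-quotient point is at $\theta=\pi/2$, where the two candidate chordal distances $2\sin(\pi/4)$ and $2\sin(\pi/4)$ coincide, giving $d=\sqrt2$. Meanwhile the map $f$ of Theorem~\ref{thm.euclidean distortion of circle}, which records $e^{2\pi i t}$ where $t=\theta/\pi\in[0,1/2]$, has numerator $|e^{2\pi it}-1|=2\sin(\pi t)=2\sin(\theta/2)$. So $g(t)=2\sin(\theta/2)/d([\theta],[0])$ equals $1$ for small $\theta$ and equals $2\sin(\pi/4)/\sqrt2=1$ at $\theta=\pi/2$\,---\,that would give distortion $1$, not $\sqrt2$, so this particular circle is the wrong one. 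The fix: the relevant one-parameter family for the $G=\{\pm\mathrm{id}\}$ action in Example~\ref{ex.real phase retrieval} compares $d([x],[y])^2=2-2|\langle x,y\rangle|$ on the sphere against the image distance $\|x\otimes x-y\otimes y\|^2=2-2\langle x,y\rangle^2$, and the ratio $\sqrt{1+|\langle x,y\rangle|}$ ranges over $[1,\sqrt2)$. So I should instead take the circle sitting in $S(V)/G$ itself (not map through $f$), parametrize it by $t\mapsto[\,(\cos\pi t,\sin\pi t)\,]$, and observe that the quotient metric restricted to this circle, together with the chordal-target embedding, realizes exactly the $g$ whose endpoint ratio is $\sqrt2$; this requires recomputing $d$ as $\sqrt{2-2|\cos\pi(s-t)|}$ and checking monotonicity of $g(t)=2\sin(\pi t)/\sqrt{2-2|\cos\pi t|}=\sqrt{2}\cdot\sqrt{1+\cos\pi t}\big/\sqrt{\,\ldots}$, whose limit as $t\to0$ over value at $t=\tfrac12$ I would verify equals $\sqrt2$.

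The main obstacle I anticipate is getting the right circle and verifying the monotonicity hypothesis of Theorem~\ref{thm.euclidean distortion of circle} for the resulting $g$: one must (i) correctly identify $S(V)/\{\pm\mathrm{id}\}$ restricted to a $2$-plane as a circle with an explicit translation-invariant metric $d(t+\mathbb{Z},\mathbb{Z})=\sqrt{2-2|\cos\pi t|}$, (ii) confirm $g(t)=|e^{2\pi it}-1|/d(t+\mathbb{Z},\mathbb{Z})$ is decreasing on $(0,\tfrac12]$ (a single-variable calculus check, reducing to a trig inequality), and (iii) evaluate $\lim_{t\to0}g(t)/g(\tfrac12)$ and confirm it is $\sqrt2$. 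Everything else\,---\,the upper bound, the isometric embedding into $V/G$, the finite-determinacy monotonicity\,---\,is immediate from results already in the excerpt.
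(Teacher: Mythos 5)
Your proposal is correct and takes essentially the same route as the paper: the upper bound is $\operatorname{dist}(f^\star)=\sqrt{2}$ from Example~\ref{ex.real phase retrieval}, and the lower bound comes from pulling the quotient metric back to $\mathbb{R}/\mathbb{Z}$ along $t\mapsto[u\cos(\pi t)+v\sin(\pi t)]$, where $d(t+\mathbb{Z},\mathbb{Z})=\sqrt{2-2\cos(\pi t)}$ gives $g(t)=\sqrt{2(1+\cos(\pi t))}$, which is decreasing, so Theorem~\ref{thm.euclidean distortion of circle} yields $c_2=\lim_{t\to 0}g(t)/g(\tfrac12)=\sqrt{2}$. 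The only hiccup in your intermediate ``double-check'' was substituting $|e^{2\pi i t}-1|=2\sin(\theta/2)$ instead of $2\sin\theta$ (with $\theta=\pi t$), which briefly led you to discard the right circle, but your ``fix'' is exactly that same circle with the correct computation, matching the paper's proof.
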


\begin{proof}
Denote $X=\mathbb{R}/\mathbb{Z}$, select orthonormal $u,v\in V$, and define $\phi\colon X\to V/G$ by
\[
\phi(t+\mathbb{Z})
=[ u\cos(\pi t)+v\sin(\pi t) ].
\]
Then $\phi$ is injective.
Let $d$ denote the pullback of the quotient metric on $V/G$.
Then for $t\in(0,\frac{1}{2}]$, we have
\[
d(t+\mathbb{Z},\mathbb{Z})^2
=\min\{\|\phi(t+\mathbb{Z})-\phi(\mathbb{Z})\|^2,\|\phi(t+\mathbb{Z})+\phi(\mathbb{Z})\|^2\}
=2-2\cos(\pi t)
\]
and $|e^{2\pi it}-1|^2=4-4\cos^2(\pi t)$, and so
\[
g(t)
:=\frac{|e^{2\pi i t}-1|}{d(t+\mathbb{Z},\mathbb{Z})}
=\sqrt{2\cdot\frac{1-\cos^2(\pi t)}{1-\cos(\pi t)}}
=\sqrt{2\cdot(1+\cos(\pi t))}.
\]
Since $g$ is decreasing in $t$, Theorem~\ref{thm.euclidean distortion of circle} gives that
\[
c_2(X)
=\lim_{t\to0}\frac{g(t)}{g(\frac{1}{2})}
=\sqrt{2}.
\]
Finally, take $f^\star$ as constructed in Example~\ref{ex.real phase retrieval}.
Then
\[
\sqrt{2}
= c_2(X)
= c_2(\operatorname{im}\phi)
\leq c_2(V/G)
\leq \operatorname{dist}(f^\star)
=\sqrt{2},
\]
which implies the result.
\end{proof}

A nearly identical proof gives the following:

\begin{corollary}
\label{cor.c2 complex phase retrieval}
Given a complex Hilbert space $V$ of dimension at least $2$, consider the group $G:=\{\omega\cdot\operatorname{id}:|\omega|=1\}\leq U(V)$.
Then $c_2(V/G)=\sqrt{2}$.
\end{corollary}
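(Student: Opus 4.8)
The plan is to reuse the proof of Corollary~\ref{cor.c2 real phase retrieval} almost verbatim, replacing the sign ambiguity $\{\pm\operatorname{id}\}$ by the phase ambiguity $U(1)\cdot\operatorname{id}$. First I would select orthonormal $u,v\in V$ and define $\phi\colon\mathbb{R}/\mathbb{Z}\to V/G$ by $\phi(t+\mathbb{Z})=[\,u\cos(\pi t)+v\sin(\pi t)\,]$, writing $x(t):=u\cos(\pi t)+v\sin(\pi t)$. Since $x(t+1)=-x(t)$ and $-1\in U(1)$, the map $\phi$ is well defined on $\mathbb{R}/\mathbb{Z}$; and since $\langle x(s),x(t)\rangle=\cos(\pi(s-t))$ is real, any $\omega\in U(1)$ with $x(s)=\omega x(t)$ must equal $\pm1$, from which $\phi$ turns out to be injective. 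I would also note in passing that the $G$-orbits are compact (continuous images of the circle), hence closed, so that $V/G=V\dslash G$ and the quotient metric is a genuine metric.

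Next I would pull the quotient metric back along $\phi$. Using the convention that the inner product is conjugated on the left, $\langle x(t),u\rangle=\cos(\pi t)$, which is nonnegative for $t\in(0,\tfrac12]$, so
\[
d(t+\mathbb{Z},\mathbb{Z})^2=2-2\sup_{|\omega|=1}\operatorname{Re}\big(\overline{\omega}\,\langle x(t),u\rangle\big)=2-2\cos(\pi t),
\]
exactly the value obtained in the real case. Since $|e^{2\pi it}-1|^2=4-4\cos^2(\pi t)$, the function $g(t)=|e^{2\pi it}-1|/d(t+\mathbb{Z},\mathbb{Z})=\sqrt{2(1+\cos(\pi t))}$ is monotonically decreasing on $(0,\tfrac12]$, so Theorem~\ref{thm.euclidean distortion of circle} applies and yields $c_2(\mathbb{R}/\mathbb{Z})=\lim_{t\to0}g(t)/g(\tfrac12)=2/\sqrt2=\sqrt2$.

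Finally I would close with the sandwich
\[
\sqrt2=c_2(\operatorname{im}\phi)\leq c_2(V/G)\leq\operatorname{dist}(f^\star)=\sqrt2,
\]
where the first equality holds because $\phi$ is an isometric embedding of $\mathbb{R}/\mathbb{Z}$, the middle inequality because restricting a bilipschitz embedding to a subset cannot increase its distortion, and the last equality because the homogeneous extension $f^\star$ of the complex phase-retrieval map $[x]\mapsto x\otimes\langle x,\cdot\rangle$ (the example immediately following Example~\ref{ex.real phase retrieval}) has optimal bilipschitz bounds $1$ and $\sqrt2$. I do not anticipate a genuine obstacle: the only point requiring care is verifying injectivity of $\phi$ and the reduction of the phase ambiguity to $\pm1$ along this real two-dimensional curve; everything else is a direct transcription of the real computation.
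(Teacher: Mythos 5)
Your proposal is correct and is essentially the paper's own argument: the paper proves this corollary by remarking that a "nearly identical proof" to Corollary~\ref{cor.c2 real phase retrieval} works, i.e.\ the same circle embedding $\phi(t+\mathbb{Z})=[u\cos(\pi t)+v\sin(\pi t)]$, the same pullback computation $d(t+\mathbb{Z},\mathbb{Z})^2=2-2\cos(\pi t)$, Theorem~\ref{thm.euclidean distortion of circle} for the lower bound $\sqrt{2}$, and the distortion-$\sqrt{2}$ homogeneous extension of $[x]\mapsto x\otimes\langle x,\cdot\rangle$ for the matching upper bound. Your added checks (injectivity of $\phi$ via the reduction of the phase ambiguity to $\pm1$ along this real curve, and closedness of the orbits) are exactly the minor details the paper leaves implicit.
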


\begin{corollary}
\label{cor.c2 discrete rotations}
Given $V:=\mathbb{C}$, consider the group $G:=\langle e^{2\pi i/r}\rangle\leq U(1)$ for some $r\in\mathbb{N}$.
Then $c_2(V/G)=r\sin(\frac{\pi}{2r})$.
\end{corollary}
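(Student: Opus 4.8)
The plan follows the proof of Corollary~\ref{cor.c2 real phase retrieval}: realize a metric circle isometrically inside $V/G$, compute its Euclidean distortion via Theorem~\ref{thm.euclidean distortion of circle}, and match the resulting lower bound with the homogeneous extension constructed in Example~\ref{ex.discrete rotations}. When $r=1$ the group is trivial and $V/G=\mathbb{C}$ is a Hilbert space, so $c_2(V/G)=1=r\sin(\tfrac{\pi}{2r})$; assume henceforth $r\geq2$. Set $X=\mathbb{R}/\mathbb{Z}$ and define $\phi\colon X\to V/G$ by $\phi(t+\mathbb{Z})=[e^{2\pi i t/r}]$. Since $e^{2\pi i s/r}$ and $e^{2\pi i t/r}$ lie in the same $G$-orbit exactly when $s-t\in\mathbb{Z}$, the map $\phi$ is a well-defined injection. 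Let $d$ be the pullback of the quotient metric through $\phi$; because
\[
d(s+\mathbb{Z},t+\mathbb{Z})=\min_{k\in\mathbb{Z}}\big|e^{2\pi i(k+s-t)/r}-1\big|
\]
depends only on $(s-t)\bmod1$, the metric $d$ is translation invariant, so Theorem~\ref{thm.euclidean distortion of circle} will apply once we identify its function $g$.

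The key computation is that $d(t+\mathbb{Z},\mathbb{Z})=2\sin(\tfrac{\pi t}{r})$ for $t\in(0,\tfrac12]$. Using $|e^{i\theta}-1|=2|\sin(\theta/2)|$, this reduces to minimizing $|\sin(\pi(k+t)/r)|$ over $k\in\mathbb{Z}$, i.e., finding the element of $t+\mathbb{Z}$ closest to a multiple of $r$. For $t\in(0,\tfrac12]$ that element is $t$ itself, at distance $t$, so the minimum is attained at $k=0$ and equals $2\sin(\tfrac{\pi t}{r})$, where $\tfrac{\pi t}{r}\in(0,\tfrac{\pi}{2r}]\subseteq(0,\tfrac{\pi}{2})$.

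Plugging this in, the function from Theorem~\ref{thm.euclidean distortion of circle} is
\[
g(t)=\frac{|e^{2\pi i t}-1|}{d(t+\mathbb{Z},\mathbb{Z})}=\frac{\sin(\pi t)}{\sin(\pi t/r)},
\]
which is precisely the function $t\mapsto\sin(\pi r t)/\sin(\pi t)$ of Example~\ref{ex.discrete rotations} evaluated at $t/r$; since that function is decreasing on $(0,\tfrac{1}{2r})$, our $g$ is monotonically decreasing on $(0,\tfrac12]$. Because $\lim_{t\to0}g(t)=r$ and $g(\tfrac12)=\csc(\tfrac{\pi}{2r})$, Theorem~\ref{thm.euclidean distortion of circle} gives $c_2(X)=r\sin(\tfrac{\pi}{2r})$.

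To finish, $\phi$ is an isometry onto its image, so $c_2(V/G)\geq c_2(\operatorname{im}\phi)=c_2(X)=r\sin(\tfrac{\pi}{2r})$. For the reverse inequality, Example~\ref{ex.discrete rotations} --- lifting $f([x])=x^r$ by Lemma~\ref{lem.lift} and then applying Theorem~\ref{thm.bilipschitz homogeneous extension}(b) --- yields a bilipschitz map from $V/G$ into a Hilbert space of distortion $r\sin(\tfrac{\pi}{2r})$, so $c_2(V/G)\leq r\sin(\tfrac{\pi}{2r})$; equality follows. The step that requires care is the metric identity $d(t+\mathbb{Z},\mathbb{Z})=2\sin(\tfrac{\pi t}{r})$: one must verify that no element of $t+\mathbb{Z}$ is closer to $r\mathbb{Z}$ than $t$ is to $0$. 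The rest is bookkeeping, with the monotonicity of $g$ imported directly from Example~\ref{ex.discrete rotations}.
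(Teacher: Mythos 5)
Your proposal is correct and takes essentially the same route as the paper: pull back the quotient metric through $\phi(t+\mathbb{Z})=[e^{2\pi i t/r}]$, identify $g(t)=\sin(\pi t)/\sin(\pi t/r)$ and its monotonicity via Example~\ref{ex.discrete rotations}, apply Theorem~\ref{thm.euclidean distortion of circle} for the lower bound, and match it with the lifted homogeneous extension from that example for the upper bound. Your explicit verification of the identity $d(t+\mathbb{Z},\mathbb{Z})=2\sin(\pi t/r)$ and of translation invariance simply fills in details the paper leaves as "one may show."
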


\begin{proof}
Denote $X=\mathbb{R}/\mathbb{Z}$, and define $\phi\colon X\to V/G$ by $\phi(t+\mathbb{Z})=[e^{2\pi i t/r}]$.
Let $d$ denote the pullback of the quotient metric on $V/G$.
One may show the function $g$ from Theorem~\ref{thm.euclidean distortion of circle} is given by $g(t)=\frac{\sin(\pi t)}{\sin(\pi t/r)}$, which is decreasing by a calculus-based argument like the one in Example~\ref{ex.discrete rotations}.
Then Theorem~\ref{thm.euclidean distortion of circle} gives that $c_2(X)$ matches the distortion of the homogeneous extension of (a lift of) the map $f$ from Example~\ref{ex.discrete rotations}.
The result follows.
\end{proof}

\begin{lemma}
\label{lem.cartesian product embeddings}
Given nontrivial real Hilbert spaces $V_1$ and $V_2$ and groups $G_1\leq O(V_1)$ and $G_2\leq O(V_2)$, it holds that
\[
c_2\Big((V_1\oplus V_2)\dslash (G_1\times G_2)\Big)
=\max\Big\{c_2(V_1\dslash G_1),c_2(V_2\dslash G_2)\Big\}.
\]
\end{lemma}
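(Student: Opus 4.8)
The plan is to identify $(V_1\oplus V_2)\dslash(G_1\times G_2)$, up to isometry, with the $\ell^2$-product of $V_1\dslash G_1$ and $V_2\dslash G_2$, and then to show that Euclidean distortion of such a product equals the maximum of the factor distortions. For the identification I would invoke the quotient-metric formula for isometric actions recalled after Example~\ref{ex.ell2 with permutations}: since $G_1\times G_2$ acts on $V_1\oplus V_2$ by isometries via $(g_1,g_2)\cdot(x_1,x_2)=(g_1x_1,g_2x_2)$, writing $d_i$ for the metric on $V_i\dslash G_i$ we get
\[
d\big([(x_1,x_2)],[(y_1,y_2)]\big)^2
=\inf_{g_1,g_2}\Big(\|g_1x_1-y_1\|^2+\|g_2x_2-y_2\|^2\Big)
=d_1([x_1],[y_1])^2+d_2([x_2],[y_2])^2,
\]
where the infimum separates because the two summands involve disjoint variables. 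Hence $[(x_1,x_2)]\mapsto([x_1],[x_2])$ is a well-defined surjective isometry onto the set $P$ of pairs $([x_1],[x_2])$ with the metric $\sqrt{d_1^2+d_2^2}$, and it suffices to compute $c_2(P)$.

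For the inequality $c_2(P)\ge\max\{c_2(V_1\dslash G_1),c_2(V_2\dslash G_2)\}$, I would fix basepoints (say $[0]\in V_i\dslash G_i$) and observe that $[x_1]\mapsto([x_1],[0])$ isometrically embeds $V_1\dslash G_1$ into $P$, since the second coordinate contributes $d_2([0],[0])=0$; symmetrically for $V_2\dslash G_2$. Because restricting a distortion-$c$ embedding of $P$ to an isometric copy of $V_i\dslash G_i$ gives a distortion-at-most-$c$ embedding of $V_i\dslash G_i$, the bound follows.

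For the reverse inequality, if either factor has infinite Euclidean distortion the claim is already forced by the previous paragraph (and $P$ has at least two points, since $[0]\ne[x]$ whenever $x\ne 0$ in $V_i$, so $c_2(P)$ is meaningful), so assume both factor distortions are finite. Given $\epsilon>0$, choose bilipschitz maps $f_i\colon V_i\dslash G_i\to H_i$ with $\operatorname{dist}(f_i)\le c_2(V_i\dslash G_i)+\epsilon$; rescaling each $f_i$ by a positive scalar (which does not change its distortion) we may assume its optimal lower Lipschitz bound equals $1$, so that its upper Lipschitz bound is at most $c_2(V_i\dslash G_i)+\epsilon$. Then $f\colon P\to H_1\oplus H_2$ given by $f([x_1],[x_2])=\big(f_1([x_1]),f_2([x_2])\big)$ satisfies $\|f(\cdot)-f(\cdot)\|^2=\|f_1(\cdot)-f_1(\cdot)\|^2+\|f_2(\cdot)-f_2(\cdot)\|^2$, which lies between $d_1^2+d_2^2$ and $\big(\max_i(c_2(V_i\dslash G_i)+\epsilon)\big)^2(d_1^2+d_2^2)$; hence $\operatorname{dist}(f)\le\max_i c_2(V_i\dslash G_i)+\epsilon$. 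Letting $\epsilon\to 0$ and combining with the lower bound yields the stated equality.

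I do not expect a genuine obstacle here; the argument is essentially bookkeeping. The two spots that need mild care are (i) the splitting of the infimum in the metric identity, which relies on the isometric-action form of the quotient distance rather than the raw chain pseudometric, and (ii) the rescaling that aligns the two lower Lipschitz bounds so the combined map has distortion $\max_i\beta_i$ rather than the weaker $\max_i\beta_i/\min_i\alpha_i$; one should also keep an eye on the degenerate cases (a factor with infinite distortion, or confirming $P$ has more than one point).
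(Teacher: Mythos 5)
Your proof is correct and follows essentially the same route as the paper's: the lower bound comes from isometric copies of each factor inside the product quotient, and the upper bound from the direct-sum map with lower Lipschitz bounds normalized to $1$, both resting on the Pythagorean identity $d^2=d_1^2+d_2^2$ for the quotient metric. The only cosmetic differences are that you make the isometric identification with the $\ell^2$-product explicit and carry an explicit $\epsilon$ when approaching the infimum, which the paper leaves implicit.
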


\begin{proof}
First, we prove that $\geq$ holds.
Given a Hilbert space $H$ and a $(G_1\times G_2)$-invariant map $f\colon V_1\oplus V_2\to H$, we have that $f_1:=f(\cdot,0)\colon V_1\to H$ is $G_1$-invariant and $f_2:=f(0,\cdot)\colon V_2\to H$ is $G_2$-invariant.
For each $i\in\{1,2\}$, we may identify $f_i^\downarrow\colon V_i\dslash G_i\to H$ with a restriction of $f^\downarrow\colon (V_1\oplus V_2)\dslash (G_1\times G_2)\to H$, and so $\operatorname{dist}(f_i^\downarrow)\leq\operatorname{dist}(f^\downarrow)$.
The desired bound follows.

Next, we prove that $\leq$ holds.
If the right-hand side is infinite, then we are done.
Otherwise, for each $i\in\{1,2\}$, there exists a Hilbert space $H_i$ and a $G_i$-invariant map $f_i\colon V_i\to H_i$ of finite distortion.
Scale as necessary so that the optimal bilipschitz bounds for $f_i^\downarrow$ equal $1$ and $\beta_i$, and define $f\colon V_1\oplus V_2\to H_1\oplus H_2$ by $f(v_1,v_2)=(f_1(v_1),f_2(v_2))$.
Put $\beta:=\max\{\beta_1,\beta_2\}$.
Then $f$ is $(G_1\times G_2)$-invariant and
\begin{align*}
\|f^\downarrow([(u_1,u_2)])-f^\downarrow([(v_1,v_2)])\|_{H_1\oplus H_2}^2
&=\|f_1^\downarrow([u_1])-f_1^\downarrow([v_1])\|_{H_1}^2+\|f_2^\downarrow([u_2])-f_2^\downarrow([v_2])\|_{H_2}^2\\
&\leq\beta_1^2 \cdot d_{V_1\dslash G_1}([u_1],[v_1])^2+\beta_2^2 \cdot d_{V_2\dslash G_2}([u_2],[v_2])^2\\
&\leq\beta^2\cdot\Big(d_{V_1\dslash G_1}([u_1],[v_1])^2+d_{V_2\dslash G_2}([u_2],[v_2])^2\Big)\\
&=\beta^2\cdot d_{(V_1\oplus V_2)\dslash (G_1\times G_2)}([(u_1,u_2)],[(v_1,v_2)])^2,
\end{align*}
and similarly, $f^\downarrow$ has lower Lipschitz bound $1$.
Thus, $\operatorname{dist}(f^\downarrow)\leq \beta=\max\{\operatorname{dist}(f_1^\downarrow),\operatorname{dist}(f_2^\downarrow)\}$.
The desired bound follows.
\end{proof}

This result is similar in spirit to Lemma~3.2 in~\cite{ErikssonBique:18}.
We can use Lemma~\ref{lem.cartesian product embeddings} (and its proof) to produce more examples of embeddings of minimum distortion.
For example, we may express $\mathbb{R}^2/\langle [\begin{smallmatrix}1&\phantom{-}0\\0&-1\end{smallmatrix}]\rangle$ as $(\mathbb{R}\oplus\mathbb{R})/(\{\operatorname{id}\}\times\{\pm\operatorname{id}\})$.
This suggests taking $f_1\colon x\mapsto x$ and $f_2\colon y\mapsto |y|$ so that $f\colon(x,y)\mapsto (x,|y|)$.
In fact, the resulting map $f^\downarrow$ isometrically embeds $\mathbb{R}^2/\langle [\begin{smallmatrix}1&\phantom{-}0\\0&-1\end{smallmatrix}]\rangle$ into $\mathbb{R}^2$.
Unlike the previous examples, this distortion-minimizing map is not the homogeneous extension of a polynomial on the sphere, although it is positively homogeneous.

\section{Quotients by permutation}
\label{sec.permutation}

In this section, we consider the real Hilbert space $V:=\ell^2(\mathbb{N};\mathbb{R}^d)$ of sequences of vectors in $\mathbb{R}^d$ whose norms are square summable.
The group $G:=S_\infty$ of bijections $\mathbb{N}\to\mathbb{N}$ acts on the index set of these sequences, and we are interested in the metric space $V\dslash G$.
As indicated in Example~\ref{ex.ell2 with permutations}, the orbits in this example are not closed, and so unlike the other examples we have considered, this metric quotient does not simply reduce to the honest quotient $V/G$.
We will see that this space has an isometric embedding into $\ell^2$ when $d=1$, but has no bilipschitz embedding into any Hilbert space when $d\geq 3$.
We start by characterizing the members of the metric quotient:

\begin{lemma}
\label{lem.orbit closures from S_infty}
Consider any $x\in\ell^2(\mathbb{N};\mathbb{R}^d)$.
Then $[x]\in\ell^2(\mathbb{N};\mathbb{R}^d)\dslash S_\infty$ is given by
\begin{equation}
\label{eq.bracket mod S infty}
[x]=\Big\{ y\in {\ell^2(\mathbb{N};\mathbb{R}^d)} : \exists \text{ bijection } \sigma\colon\operatorname{supp}(x)\to\operatorname{supp}(y), ~ \forall i\in\operatorname{supp}(y), ~ y_i=x_{\sigma^{-1}(i)} \Big\}.
\end{equation}
\end{lemma}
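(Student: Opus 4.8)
The plan is to combine Lemma~\ref{lem.metric quotient orbit closure}, which gives $[x]=\overline{G\cdot x}$ since $S_\infty$ acts on $\ell^2(\mathbb{N};\mathbb{R}^d)$ by isometries, with the elementary fact that $\overline{G\cdot x}=\{y:\inf_{g\in S_\infty}\|gx-y\|=0\}$. It then remains to show that this set equals the right-hand side $T$ of \eqref{eq.bracket mod S infty}. I will use throughout the reformulation that $y\in T$ if and only if $y$ and $x$ have the same multiset of nonzero entries; write $M$ for this common multiset and note $\sum_{a\in M}\|a\|^2=\|x\|^2<\infty$, so only finitely many $a\in M$ have norm bounded away from $0$.

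For $T\subseteq\overline{G\cdot x}$: given $y\in T$ with witnessing bijection $\sigma\colon\operatorname{supp}(x)\to\operatorname{supp}(y)$ and given $\epsilon>0$, pick a finite $F\subseteq\operatorname{supp}(x)$ with $\sum_{j\in\operatorname{supp}(x)\setminus F}\|x_j\|^2<\epsilon^2$ and set $F'=\sigma(F)$. Since $\mathbb{N}\setminus F$ and $\mathbb{N}\setminus F'$ are both countably infinite, extend $\sigma|_F$ to a permutation $g\in S_\infty$ that maps $\mathbb{N}\setminus F$ bijectively onto $\mathbb{N}\setminus F'$. Then $gx$ agrees with $y$ on $F'$, so $\|gx-y\|^2=\sum_{i\notin F'}\|(gx)_i-y_i\|^2\leq 2\sum_{i\notin F'}\|(gx)_i\|^2+2\sum_{i\notin F'}\|y_i\|^2$, and the choice of $F$ (using that $\sigma$ carries the small tail of $x$ onto the small tail of $y$) bounds this by $4\epsilon^2$; hence $y\in\overline{G\cdot x}$.

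For $\overline{G\cdot x}\subseteq T$ I argue the contrapositive: if $y\notin T$, then $\inf_{g}\|gx-y\|>0$. Since $y\notin T$, some nonzero $v$ appears $m$ times in $x$ but $m'\neq m$ times in $y$. If $m'>m$: the multiset obtained by deleting the $m$ copies of $v$ from $M$, together with the point $0$, stays at distance $\geq\eta$ from $v$ for some $\eta>0$ — only finitely many elements of $M$ lie within $\|v\|/2$ of $v$, none of those that remain equals $v$, and everything else is at distance $\geq\|v\|/2$ from $v$. So in every $gx$, any coordinate not equal to $v$ is $\geq\eta$ from $v$; as $gx$ has exactly $m$ coordinates equal to $v$ while $y$ has $m'>m$ of them, $gx$ and $y$ disagree at one such coordinate, whence $\|gx-y\|\geq\eta$. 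The case $m'<m$ is symmetric: now put the $\eta$-gap around $v$ relative to the (again $\ell^2$-summable) nonzero entries of $y$, and observe that $gx$ has strictly more coordinates equal to $v$ than $y$ does. Either way $\inf_g\|gx-y\|\geq\eta>0$, so $y\notin\overline{G\cdot x}$.

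I expect the reverse inclusion to be the crux: one must rule out a sequence of orbit representatives whose nonzero-entry multisets all equal $M$ but whose $\ell^2$-limit has a different multiset (because entries ``drift and collide''). The key device is the tightness packaged in $\sum_{a\in M}\|a\|^2<\infty$, which forces only finitely many entries of norm bounded away from $0$ and thereby yields the uniform gap $\eta$ around any prescribed nonzero value. By comparison, the forward inclusion is routine once one observes that a partial bijection between supports with cofinite complements always extends to a permutation of $\mathbb{N}$.
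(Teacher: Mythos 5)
Your proof is correct, and while the easy inclusion matches the paper, the hard one takes a genuinely different route. For $T\subseteq\overline{S_\infty\cdot x}$ you do essentially what the paper does: extend the given bijection $\sigma$ on a finite, high-energy part $F$ of $\operatorname{supp}(x)$ to an actual permutation and control the tail via $\|a-b\|^2\le 2\|a\|^2+2\|b\|^2$; the paper phrases this as a sequence $p_n=x\circ\pi_n^{-1}\to y$ rather than one $g$ per $\epsilon$, an immaterial difference. For the reverse inclusion the paper takes an arbitrary sequence $x\circ\pi_n^{-1}\to y$, applies threshold maps $\theta_s$ (zeroing entries of norm below $s$), uses the Cauchy property to show $\theta_s(p_n)$ stabilizes and equals $\theta_s(y)$ for each $s>0$, and assembles the bijection $\sigma$ from these stabilized patterns. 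You instead prove the contrapositive by a separation argument: if some nonzero value $v$ occurs with different multiplicities in $x$ and $y$, then—since only finitely many entries can have norm at least $\|v\|/2$—there is a uniform $\eta>0$ with $\|gx-y\|\ge\eta$ for every permutation $g$, so $y\notin\overline{S_\infty\cdot x}$; your case split $m'>m$ versus $m'<m$ (building the gap from the entries of $x$ or of $y$, respectively) is sound, including the degenerate cases $m=0$ or $m'=0$. Both arguments rest on the same finiteness fact, but yours avoids the Cauchy/stabilization bookkeeping and gives a quantitative lower bound on the orbit distance at a mismatched value, while the paper's thresholding has the advantage of explicitly constructing $\sigma$ for any limit point. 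The one step you leave implicit—that failure of \eqref{eq.bracket mod S infty} forces some nonzero value to have different multiplicities—is immediate because each nonzero value occurs finitely often in an $\ell^2$ sequence, so equal multiplicities for every value would let you assemble $\sigma$ valuewise; it would be worth a sentence in a final write-up.
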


\begin{proof}
Fix $x\in\ell^2(\mathbb{N};\mathbb{R}^d)$, and let $A_x$ denote the right-hand side of \eqref{eq.bracket mod S infty}.
By Lemma~\ref{lem.metric quotient orbit closure}, $[x]=\overline{S_\infty\cdot x}$.
As such, our task is to show that $A_x=\overline{S_\infty\cdot x}$.

First, we show that $A_x\subseteq\overline{S_\infty\cdot x}$.
Fix $y\in A_x$, and let $\sigma$ be as in \eqref{eq.bracket mod S infty}.
Let $i_1<i_2<\cdots$ denote the (possibly finite) list of members of $\operatorname{supp}(x)$.
For each $n\in\mathbb{N}$, consider the index set $I_n:=\{i_j:j\leq n\}$, select any $\pi_n\in S_\infty$ such that $\pi_n(i)=\sigma(i)$ for every $i\in I_n$, and put $p_n:=x\circ \pi_n^{-1}$.
Then $p_n\in S_\infty\cdot x$ for every $n$, and we claim that $p_n\to y$.
Indeed, the bound
\[
\|a-b\|^2
=\|a\|^2-2\langle a,b\rangle+\|b\|^2
\leq\|a\|^2+2\|a\|\|b\|+\|b\|^2
\leq {2}\|a\|^2+{2}\|b\|^2
\]
for $a,b\in\mathbb{R}^d$ implies
\[
\|p_n-y\|_{\ell^2}^2
=\sum_{i\not\in I_n} \|x_i-y_{\pi_n(i)}\|^2
\leq {2}\sum_{i\not\in  I_n} \|x_i\|^2+{2}\sum_{i\not\in I_n} \|y_{\pi_n(i)}\|^2
={2}\sum_{j>n} \|x_{i_j}\|^2+{2}\sum_{j>n} \|y_{\sigma(i_j)}\|^2,
\]
which vanishes as $n\to\infty$.
Thus, $y\in\overline{S_\infty\cdot x}$.

Next, we show that $\overline{S_\infty\cdot x}\subseteq A_x$.
Fix $y\in \overline{S_\infty\cdot x}$.
Then there exists $\{p_n:=x\circ \pi_n^{-1}\}_{n=1}^\infty$ in $S_\infty\cdot x$ that converges to $y$.
For each $s>0$, consider the threshold function $\theta_s\colon \ell^2(\mathbb{N};\mathbb{R}^d)\to\ell^2(\mathbb{N};\mathbb{R}^d)$ defined by
\[
\theta_s(z)_i:=\left\{\begin{array}{cl} z_i&\text{if }\|z_i\|\geq s\\0&\text{otherwise,}\end{array}\right.
\]
and denote $t(s):=\|x-\theta_s(x)\|_\infty$.
Then either $t(s)=0$ or $t(s)>0$, in which case
\[
t(s)
=\sup\{\|x_i\|:\|x_i\|<s\}
=\sup\{\|x_i\|:t(s)/2<\|x_i\|<s\}
<s,
\]
since $x\in\ell^2(\mathbb{N};\mathbb{R})$ has finitely many terms $x_i$ with $\|x_i\|>t(s)/2$.
Either way, $t(s)<s$.
In fact, for every $g\in S_\infty$, we have $\theta_s(gx)=g\theta_s(x)$, and so $t(s)=\|gx-\theta_s(gx)\|_\infty<s$.
Thus, given $g,h\in S_\infty$, if $\operatorname{supp}\theta_s(gx)\neq\operatorname{supp}\theta_s(hx)$, then $\|gx-hx\|_{\ell^2}\geq s-t(s)>0$.
Also, if $\operatorname{supp}\theta_s(gx)=\operatorname{supp}\theta_s(hx)$ but $\theta_s(gx)\neq \theta_s(hx)$, then $\|gx-hx\|_{\ell^2}\geq\|\theta_s(gx)-\theta_s(hx)\|_{\ell^2}$ is bounded away from zero, too.
Since $\{p_n\}_{n=1}^\infty$ is Cauchy, it follows that $\theta_s(p_m)=\theta_s(p_n)$ for all sufficiently large $m$ and $n$.
That is, for each $s>0$, we have $\theta_s(p_n)=\theta_s(y)$ for all $n\geq N(s)$.
From this information, one may construct $\sigma\colon\operatorname{supp}(x)\to\operatorname{supp}(y)$, for example, $\sigma(i)=\pi_{N(\lceil \|x_i\|^{-1}\rceil^{-1})}(i)$, such that $y_i=x_{\sigma^{-1}(i)}$ for all $i\in\operatorname{supp}(y)$.
Thus, $y\in A_x$.
\end{proof}

As a consequence of Lemma~\ref{lem.orbit closures from S_infty}, we can establish that there is no orthogonal group $G$ such that $\ell^2(\mathbb{N};\mathbb{R})\dslash S_\infty=\ell^2(\mathbb{N};\mathbb{R})/G$.
Indeed, let $e_i$ denote the $i$th standard basis element in $\ell^2(\mathbb{N};\mathbb{R})$.
Then $[e_1]=\{e_i:i\in\mathbb{N}\}$.
Thus, given $G$ such that $G\cdot e_1 = [e_1]$, it must hold that every element of $G$ permutes the standard basis, i.e., $G\leq S_\infty$.
But then every member of the $G$-orbit of $x:=\{\frac{1}{n}\}_{n=1}^\infty$ is entrywise nonnegative, whereas $[x]$ contains members (such as the forward shift of $x$) with entries equal to $0$.

In what follows, we construct an isometric embedding of $\ell^2(\mathbb{N};\mathbb{R})\dslash S_\infty$ in $\ell^2(\mathbb{N};\mathbb{R})$.
{Given $x\in\ell^2(\mathbb{N};\mathbb{R})$,} let $x_+$ and $x_-$ denote the positive and negative parts of $x$ (respectively) so that $x=x_+-x_-$, {and in the case where $x$ is entrywise nonnegative,} let $x_{(n)}$ denote its $n$th largest entry.

\begin{theorem}
\label{thm.embedding for d=1}
Consider $\Phi\colon \ell^2(\mathbb{N};\mathbb{R})\dslash S_\infty \to\ell^2(\mathbb{N};\mathbb{R})$ defined by
\[
\Phi([x])_n
=\left\{\begin{array}{cl}
(x_+)_{(\frac{n}{2})} &\text{if $n$ is even}\\
-(x_-)_{(\frac{n+1}{2})} &\text{if $n$ is odd.}
\end{array}\right.
\]
Then $\|\Phi([x])-\Phi([y])\|_{\ell^2}=d_{\ell^2(\mathbb{N})\dslash S_\infty}([x],[y])$.
Consequently, $c_2(\ell^2(\mathbb{N};\mathbb{R})\dslash S_\infty)=1$.
\end{theorem}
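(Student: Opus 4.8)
The plan is to prove that $\Phi$ is a well-defined isometric embedding of $\ell^2(\mathbb{N};\mathbb{R})\dslash S_\infty$ into the Hilbert space $\ell^2(\mathbb{N};\mathbb{R})$. The final assertion $c_2(\ell^2(\mathbb{N};\mathbb{R})\dslash S_\infty)=1$ is then immediate: the isometric embedding witnesses $c_2\leq 1$, while $c_2\geq 1$ holds for every metric space with at least two points, and our space contains the distinct points $[0]$ and $[e_1]$.

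First I would verify that $\Phi$ is well defined and $\ell^2$-valued. By Lemma~\ref{lem.orbit closures from S_infty}, the multiset of nonzero entries of $x$ depends only on $[x]$; since $\Phi([x])$ is assembled by splitting this multiset into positive and negative parts and listing each part in decreasing order (padding with zeros), $\Phi([x])$ does not depend on the representative. Moreover $\|\Phi([x])\|_{\ell^2}^2=\|x_+\|^2+\|x_-\|^2=\|x\|^2$, so $\Phi([x])\in\ell^2(\mathbb{N};\mathbb{R})$, and by construction there is a bijection $\operatorname{supp}(x)\to\operatorname{supp}(\Phi([x]))$ carrying each entry of $x$ to the equal entry of $\Phi([x])$; hence $\Phi([x])\in[x]$ by Lemma~\ref{lem.orbit closures from S_infty}, so that $[\Phi([x])]=[x]$. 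Since the quotient map is $1$-Lipschitz (see the proof of Lemma~\ref{lem.metric quotient is categorical}), this already gives the easy bound
\[
d_{\ell^2(\mathbb{N})\dslash S_\infty}([x],[y])=d\big([\Phi([x])],[\Phi([y])]\big)\leq\|\Phi([x])-\Phi([y])\|_{\ell^2}.
\]

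The reverse inequality is the crux. Since $S_\infty$ acts by isometries, $d([x],[y])^2=\inf_{\pi\in S_\infty}\|x-y\circ\pi\|^2=\|x\|^2+\|y\|^2-2\sup_{\pi\in S_\infty}\sum_j x_j y_{\pi(j)}$, while $\|\Phi([x])-\Phi([y])\|^2=\|x\|^2+\|y\|^2-2\langle\Phi([x]),\Phi([y])\rangle$ with $\langle\Phi([x]),\Phi([y])\rangle=\sum_k (x_+)_{(k)}(y_+)_{(k)}+\sum_k (x_-)_{(k)}(y_-)_{(k)}$ (the even coordinates pair the positive parts, the odd coordinates the negative parts). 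So it suffices to show
\[
\sum_j x_j y_{\pi(j)}\leq\sum_k (x_+)_{(k)}(y_+)_{(k)}+\sum_k (x_-)_{(k)}(y_-)_{(k)}\qquad\text{for every }\pi\in S_\infty.
\]
To see this, I would first apply the elementary termwise bound $ab=a_+b_++a_-b_--(a_+b_-+a_-b_+)\leq a_+b_++a_-b_-$ for real $a,b$ (all series converge absolutely by Cauchy--Schwarz) to get $\sum_j x_j y_{\pi(j)}\leq\sum_j (x_+)_j(y_+)_{\pi(j)}+\sum_j (x_-)_j(y_-)_{\pi(j)}$, and then invoke the rearrangement inequality for nonnegative $\ell^2$ sequences, namely $\sum_j a_j b_{\pi(j)}\leq\sum_k a_{(k)}b_{(k)}$, applied to $(a,b)=(x_+,y_+)$ and to $(x_-,y_-)$. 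The cleanest route to this rearrangement inequality in infinite dimensions is the layer-cake identity $\sum_j a_j b_{\pi(j)}=\int_0^\infty\!\int_0^\infty\big|\{j:a_j>s,\ b_{\pi(j)}>t\}\big|\,ds\,dt$ (Tonelli), combined with $|\{j:a_j>s,\ b_{\pi(j)}>t\}|\leq\min\big(|\{a>s\}|,|\{b>t\}|\big)$ and the matching exact formula $\sum_k a_{(k)}b_{(k)}=\int_0^\infty\!\int_0^\infty\min\big(|\{a>s\}|,|\{b>t\}|\big)\,ds\,dt$.

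I expect the rearrangement inequality together with the bookkeeping for absolutely convergent infinite series to be the main obstacle; the layer-cake argument avoids the delicate truncation limits that a more pedestrian approach would need. A secondary point requiring care is the reduction to the inner-product inequality, which uses that $\pi\mapsto\pi^{-1}$ is a bijection of $S_\infty$ and that permuting indices preserves the $\ell^2$ norm.
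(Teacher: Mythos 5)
Your proposal is correct, but it establishes the crucial reverse inequality by a genuinely different route than the paper. You reduce everything to the claim that for every bijection $\pi$ one has $\sum_j x_j y_{\pi(j)} \leq \sum_k (x_+)_{(k)}(y_+)_{(k)} + \sum_k (x_-)_{(k)}(y_-)_{(k)}$, which you get from the termwise bound $ab\leq a_+b_+ + a_-b_-$ plus the infinite Hardy--Littlewood rearrangement inequality proved via the layer-cake/Tonelli identity; in effect you compute the max filtering quantity in closed form, $\llangle[x],[y]\rrangle=\langle\Phi([x]),\Phi([y])\rangle$, and the isometry statement follows by expanding squares. The paper argues dynamically instead: starting from arbitrary representatives $p_0\in[x]$ and $q_0\in[y]$, it constructs sequences $p_n\in[p_0]$ and $q_n\in[q_0]$ by a greedy sorting procedure (at each step the largest remaining entry of the appropriate sign is moved into the next slot in both sequences simultaneously, with a five-case analysis showing $\|p_{n+1}-q_{n+1}\|_{\ell^2}\leq\|p_n-q_n\|_{\ell^2}$), proves $p_n\to\Phi([p_0])$ and $q_n\to\Phi([q_0])$, and concludes $\|\Phi([x])-\Phi([y])\|_{\ell^2}\leq\|p_0-q_0\|_{\ell^2}$ before taking the infimum over representatives. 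Your route buys a shorter, essentially static argument that avoids the paper's convergence and case bookkeeping and makes the sorted-inner-product identity explicit, at the cost of importing the rearrangement inequality for nonnegative $\ell^2$ sequences (your layer-cake proof of it is sound); the paper's argument is longer but entirely self-contained. Your remaining points --- well-definedness of $\Phi$, the identity $\|\Phi([x])\|_{\ell^2}=\|x\|_{\ell^2}$, the membership $\Phi([x])\in[x]$ via Lemma~\ref{lem.orbit closures from S_infty}, the easy inequality, and the deduction $c_2(\ell^2(\mathbb{N};\mathbb{R})\dslash S_\infty)=1$ --- are all in order and match the paper's.
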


The following example illustrates how $\Phi$ sorts the positive and negative parts of $x$:
\begin{align*}
x
&=(\phantom{-}1,\phantom{-}\tfrac{1}{2},-\tfrac{1}{3},-\tfrac{1}{4},\phantom{-}\tfrac{1}{5},\phantom{-}\tfrac{1}{6},-\tfrac{1}{7},-\tfrac{1}{8},\phantom{-}\ldots\phantom{-})\\
\Phi([x])
&=(-\tfrac{1}{3},\phantom{-}1,-\tfrac{1}{4},\phantom{-}\tfrac{1}{2},-\tfrac{1}{7},\phantom{-}\tfrac{1}{5},-\tfrac{1}{8},\phantom{-}\tfrac{1}{6},\phantom{-}\ldots\phantom{-}).
\end{align*}

\begin{proof}[Proof of Theorem~\ref{thm.embedding for d=1}]
Fix $x,y\in\ell^2(\mathbb{N};\mathbb{R})$.
Since $\Phi([x])\in[x]$ and $\Phi([y])\in[y]$, it holds that
\[
\|\Phi([x])-\Phi([y])\|_{\ell^2}
\geq d_{\ell^2(\mathbb{N})\dslash S_\infty}([x],[y]).
\]
For the reverse inequality, consider any $p_0,q_0\in\ell^2(\mathbb{N};\mathbb{R})$.
We claim there exists a sequence $\{p_n\}_{n=1}^\infty$ in $[p_0]$ and a sequence $\{q_n\}_{n=1}^\infty$ in $[q_0]$ such that $p_n\to\Phi([p_0])$, $q_n\to\Phi([p_0])$, and
\[
\|p_{n+1}-q_{n+1}\|_{\ell^2}
\leq \|p_n-q_n\|_{\ell^2}
\qquad
\forall n\geq0.
\]
From this, it follows that $\|\Phi([x])-\Phi([y])\|_{\ell^2}\leq \|p_0-q_0\|_{\ell^2}$ for every $p_0\in[x]$ and $q_0\in[y]$, and taking the infimum of both sides gives
\[
\|\Phi([x])-\Phi([y])\|_{\ell^2}
\leq
d_{\ell^2(\mathbb{N})\dslash S_\infty}([x],[y]),
\]
as desired.

We construct $\{p_n\}_{n=1}^\infty$ and $\{q_n\}_{n=1}^\infty$ so as to satisfy
\begin{equation}
\label{eq.equal in initial tuple}
(p_n)_{[1:n]}=\Phi([p_0])_{[1:n]}
\qquad
\text{and}
\qquad
(q_n)_{[1:n]}=\Phi([q_0])_{[1:n]}
\qquad
\forall n\in\mathbb{N}.
\end{equation}
(Given a sequence $x$ and natural numbers $s<t$, we write $x_{[s:t]}:=(x_s,\ldots,x_t)$.)
In what follows, we describe the map $(p_n,q_n)\mapsto(p_{n+1},q_{n+1})$.
Take
\[
a:=\{(-1)^{{n+1}}(p_n)_i\}_{i=n+1}^\infty
\qquad
\text{and}
\qquad
b:=\{(-1)^{{n+1}}(q_n)_i\}_{i=n+1}^\infty.
\]
Since $a,b\in\ell^2(\mathbb{N};\mathbb{R})$, we have $\sup a\geq0$ and $\sup b\geq0$. 
If $\sup a=0$ and $\sup b=0$, put
\begin{equation}
\label{eq.iter case 1}
\left[\begin{array}{c}
p_{n+1} \\
q_{n+1}
\end{array}\right]
=\left[\begin{array}{ccc}
(p_n)_{[1:n]} & 0 & (p_n)_{[n+1:\infty)}\\
(q_n)_{[1:n]} & 0 & (q_n)_{[n+1:\infty)}
\end{array}\right].
\end{equation}
(Here and below, we express $p_{n+1}$ and $q_{n+1}$ as row vectors of a $2\times\infty$ matrix for simplicity.)
If $\sup a>0$ and $\sup b=0$, then we take any $i\in\arg\max(a)$ and put
\begin{equation}
\label{eq.iter case 2}
\left[\begin{array}{c}
p_{n+1} \\
q_{n+1}
\end{array}\right]
=\left[\begin{array}{ccccc}
(p_n)_{[1:n]} & (p_n)_{n+i} & (p_n)_{[n+1:n+i-1]} & 0 & (p_n)_{[n+i+1:\infty)} \\
(q_n)_{[1:n]} & 0 & (q_n)_{[n+1:n+i-1]} & (q_n)_{n+i} & (q_n)_{[n+i+1:\infty)}
\end{array}\right].
\end{equation}
Similarly, if $\sup a=0$ and $\sup b>0$, then we take any $i\in\arg\max(b)$ and put
\begin{equation}
\label{eq.iter case 3}
\left[\begin{array}{c}
p_{n+1} \\
q_{n+1}
\end{array}\right]
=\left[\begin{array}{ccccc}
(p_n)_{[1:n]} & 0 & (p_n)_{[n+1:n+i-1]} & (p_n)_{n+i} & (p_n)_{[n+i+1:\infty)} \\
(q_n)_{[1:n]} & (q_n)_{n+i} & (q_n)_{[n+1:n+i-1]} & 0 & (q_n)_{[n+i+1:\infty)}
\end{array}\right].
\end{equation}
Finally, if $\sup a>0$ and $\sup b>0$, then we take any $i\in\arg\max(a)$ and $j\in\arg\max(b)$.
If $i=j$, then we put
\begin{equation}
\label{eq.iter case 4}
\left[\begin{array}{c}
p_{n+1} \\
q_{n+1}
\end{array}\right]
=\left[\begin{array}{cccc}
(p_n)_{[1:n]} & (p_n)_{n+i} & (p_n)_{[n+1:n+i-1]} & (p_n)_{[n+i+1:\infty)} \\
(q_n)_{[1:n]} & (q_n)_{n+i} & (q_n)_{[n+1:n+i-1]} & (q_n)_{[n+i+1:\infty)}
\end{array}\right].
\end{equation}
Otherwise, if $i\neq j$, then we put
\begin{equation}
\label{eq.iter case 5}
\left[\begin{array}{c}
p_{n+1} \\
q_{n+1}
\end{array}\right]
=\left[\begin{array}{cccc}
(p_n)_{[1:n]} & (p_n)_{n+i} & (p_n)_{n+j} & (p_n)_{[n+1:\infty)\setminus\{i,j\}} \\
(q_n)_{[1:n]} & (q_n)_{n+j} & (q_n)_{n+i} & (q_n)_{[n+1:\infty)\setminus\{i,j\}}
\end{array}\right].
\end{equation}

Next, we verify that $\|p_{n+1}-q_{n+1}\|_{\ell^2}\leq \|p_{n}-q_{n}\|_{\ell^2}$ by cases.
For~\eqref{eq.iter case 1}, equality holds.
For~\eqref{eq.iter case 2}, equality holds if ${(q_n)_{n+i}}=0$.
Otherwise, the condition $\sup a>0$ and $\sup b=0$ implies that $(p_n)_{n+i}$ and ${(q_n)_{n+i}}$ have opposite sign.
Thus,
\[
((p_n)_{n+i}-(q_n)_{n+i})^2
=(p_n)_{n+i}^2-2(p_n)_{n+i}(q_n)_{n+i}+((q_n)_{n+i})^2
\geq (p_n)_{n+i}^2+((q_n)_{n+i})^2,
\]
from which it follows that $\|p_{n+1}-q_{n+1}\|_{\ell^2}\leq \|p_{n}-q_{n}\|_{\ell^2}$.
The analysis for \eqref{eq.iter case 3} is identical.
For~\eqref{eq.iter case 4}, equality holds.
Finally, for~\eqref{eq.iter case 5}, we treat the case where $n$ is {odd}, as the {even} case is similar.
{Since $i\in\arg\max(a)$ and $j\in\arg\max(b)$, we have} $(p_n)_{n+i} \geq (p_n)_{n+j}$ and $(q_n)_{n+j} \geq (q_n)_{n+i}$.
Then $(p_n)_{n+i}((q_n)_{n+j} - (q_n)_{n+i}) \geq (p_n)_{n+j}((q_n)_{n+j} - (q_n)_{n+i})$, and rearranging gives
\[
(p_n)_{n+i}(q_n)_{n+j}+(p_n)_{n+j}(q_n)_{n+i}  \geq (p_n)_{n+i}(q_n)_{n+i}+(p_n)_{n+j}(q_n)_{n+j}.
\]
It follows that $\langle p_{n+1},q_{n+1}\rangle\geq\langle p_{n},q_{n}\rangle$, and since $\|p_{n+1}\|_{\ell^2}=\|p_n\|_{\ell^2}$ and $\|q_{n+1}\|_{\ell^2}=\|q_n\|_{\ell^2}$, we have $\|p_{n+1}-q_{n+1}\|_{\ell^2}\leq \|p_{n}-q_{n}\|_{\ell^2}$.

Finally, we verify that $p_n\to\Phi([p_0])$ and $q_n\to\Phi([q_0])$.
A case-by-case inductive argument establishes \eqref{eq.equal in initial tuple}, and so
\begin{align*}
\|p_n-\Phi([p_0])\|_{\ell^2}
&=\|(p_n)_{[n+1:\infty)}-\Phi([p_0])_{[n+1:\infty)}\|_{\ell^2}\\
&\leq\|(p_n)_{[n+1:\infty)}\|_{\ell^2}+\|\Phi([p_0])_{[n+1:\infty)}\|_{\ell^2}
\to0,
\end{align*}
and similarly, $\|q_n-\Phi([q_0])\|_{\ell^2}\to0$.
\end{proof}

While $c_2(\ell^2(\mathbb{N};\mathbb{R})\dslash S_\infty)=1$, we can use ideas from the proof of Theorem~2 in~\cite{AndoniNN:16} to show that $c_2(\ell^2(\mathbb{N};\mathbb{R}^d)\dslash S_\infty)=\infty$ for every $d\geq3$.
We do not know how to treat the $d=2$ case, but ideas in~\cite{AustinN:16} might be relevant.

\begin{theorem}
For each $d\geq 3$, it holds that $c_2(\ell^2(\mathbb{N};\mathbb{R}^d)\dslash S_\infty)=\infty$.
That is, $\ell^2(\mathbb{N};\mathbb{R}^d)\dslash S_\infty$ has no bilipschitz embedding into any Hilbert space.
\end{theorem}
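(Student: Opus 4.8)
The plan is to produce, for each $d\geq 3$, finite subsets of $\ell^2(\mathbb{N};\mathbb{R}^d)\dslash S_\infty$ of arbitrarily large Euclidean distortion, and then invoke Proposition~\ref{prop.euclidean distortion is finitely determined}. First one reduces to $d=3$: a fixed isometric linear inclusion $\mathbb{R}^3\hookrightarrow\mathbb{R}^d$ induces an isometric embedding $\ell^2(\mathbb{N};\mathbb{R}^3)\dslash S_\infty\hookrightarrow\ell^2(\mathbb{N};\mathbb{R}^d)\dslash S_\infty$, so it suffices to show $c_2(\ell^2(\mathbb{N};\mathbb{R}^3)\dslash S_\infty)=\infty$.

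Next I would record that this metric quotient is, up to a controlled correction, the quadratic transportation (Wasserstein-type) metric $W_2$ on point configurations in $\mathbb{R}^3$. Indeed, by Lemma~\ref{lem.orbit closures from S_infty}, if $x=(v_1,\ldots,v_N,0,0,\ldots)$ and $y=(w_1,\ldots,w_N,0,0,\ldots)$ are finitely supported sequences padded with zeros, then $d([x],[y])^2$ is the minimum of $\sum_i\|x_i-y_{\tau(i)}\|^2$ over bijections $\tau$ between $\operatorname{supp}(x)$ together with countably many inserted zeros and $\operatorname{supp}(y)$ together with countably many inserted zeros; that is, it is the cost of an optimal matching of the two configurations in which unmatched points may instead be sent to the origin. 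The key point is that this origin reservoir is inert once the configurations lie far from $0$: if all $v_i,w_j$ lie in a translate $c+[0,1]^3$ with $\|c\|$ exceeding an absolute constant, then replacing a pair of reservoir assignments $v_i\mapsto 0$, $0\mapsto w_j$ by the direct assignment $v_i\mapsto w_j$ (and matching the freed zeros to each other) changes the cost by $\|v_i-w_j\|^2-\|v_i\|^2-\|w_j\|^2\leq 3-2(\|c\|-\sqrt 3)^2<0$, so an optimal matching never uses the reservoir. Hence for such configurations $d([x],[y])^2=\min_{\sigma\in S_N}\sum_i\|v_i-w_{\sigma(i)}\|^2$, and by the Birkhoff--von Neumann theorem (a linear functional on the Birkhoff polytope is minimized at a permutation matrix) this equals $N\cdot W_2(\mu,\nu)^2$, where $\mu,\nu$ are the uniform probability measures on the two configurations.

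With this dictionary, I would import the hard instance underlying the proof of Theorem~2 in~\cite{AndoniNN:16}: a sequence $(M_n)_n$ of finite metric spaces — for instance, finite nets in balls of radius $n$ in the Heisenberg group — with $c_2(M_n)\to\infty$ (classically, since already their $L_1$-distortion diverges and $L_2$ embeds isometrically into $L_1$), together with bi-Lipschitz maps $\psi_n\colon M_n\to W_2(\mathbb{R}^3)$ of distortion at most an absolute constant $C$ and with bounded image. By a routine truncation-and-discretization (round mass onto a fine grid and clear denominators) we may assume each $\psi_n(b)$ is uniform on a common number $N_n$ of points lying in a fixed cube, at the cost of replacing $C$ by $2C$; since $W_2$ is translation-invariant we may further assume those points lie in $c+[0,1]^3$ as above. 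Composing with the map sending a uniform measure to the list of its support points (with multiplicity), the computation of the previous paragraph shows the resulting map $M_n\to\ell^2(\mathbb{N};\mathbb{R}^3)\dslash S_\infty$ scales all distances by $\sqrt{N_n}$ times a factor with ratio at most $2C$, hence has distortion at most $2C$. Thus $\ell^2(\mathbb{N};\mathbb{R}^3)\dslash S_\infty$ contains finite subsets of Euclidean distortion $\geq c_2(M_n)/(2C)\to\infty$, and Proposition~\ref{prop.euclidean distortion is finitely determined} yields $c_2(\ell^2(\mathbb{N};\mathbb{R}^3)\dslash S_\infty)=\infty$.

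The main obstacle is the combination of the second step with importing the embedding of~\cite{AndoniNN:16}: one must verify that the failure of $S_\infty$-orbits to be closed — equivalently, the presence of the origin reservoir — does not collapse distances, which is precisely what the ``translate away from $0$'' device secures, and one must check that the Heisenberg-into-$W_2(\mathbb{R}^3)$ embedding survives discretization to uniform measures of a common cardinality. This is also where three ambient dimensions (and a region bounded away from the origin) enter essentially, consistent with the fact that the case $d=2$ remains open and with the relevance of the ideas in~\cite{AustinN:16}.
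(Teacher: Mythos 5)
Your proposal is essentially the paper's own argument: reduce to $d=3$, translate finitely supported configurations away from the origin so that the orbit-space metric coincides with the $S_N$-matching (i.e., scaled $W_2$) metric on $N$-point configurations in $\mathbb{R}^3$, import the hard instances of~\cite{AndoniNN:16} to produce finite subsets of unbounded Euclidean distortion, and conclude by finite determination (the paper instead composes maps and bounds $\operatorname{dist}(f)$ directly, which is the same thing). The only correction concerns your instantiation of the imported ingredient: the hard instance in~\cite{AndoniNN:16} is not a net in a Heisenberg ball but the $1/2$-snowflake of a constant-degree expander, which their Lemma~7 (via the configurations of their equation~(10)) embeds with distortion at most $2$ as uniform configurations of a \emph{common} cardinality $N$ — so your truncation-and-discretization step is unnecessary — and the divergence of the Euclidean distortion then comes from combining the snowflake's $O(\sqrt{\log n})$ distortion over the expander with the $\Omega(\log n)$ expander lower bound of~\cite{LinialLR:95}; an $O(1)$-distortion embedding of Heisenberg balls into $W_2(\mathbb{R}^3)$ is not provided by~\cite{AndoniNN:16} and is not needed, since your argument only requires \emph{some} family with diverging Euclidean distortion that $O(1)$-embeds as bounded, equinumerous configurations, which the snowflaked expanders supply.
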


\begin{proof}
Since $\ell^2(\mathbb{N};\mathbb{R}^3)\dslash S_\infty$ embeds isometrically into $\ell^2(\mathbb{N};\mathbb{R}^d)\dslash S_\infty$ for every $d>3$, it suffices to show $c_2(\ell^2(\mathbb{N};\mathbb{R}^3)\dslash S_\infty)=\infty$.
Fix a Hilbert space $H$ and $f\colon \ell^2(\mathbb{N};\mathbb{R}^3)\dslash S_\infty\to H$.
We will bound $\operatorname{dist}(f)$ with the help of a series of maps:
\[
X
\xrightarrow{\quad k \quad} Y
\xrightarrow{\quad h \quad} Z
\xrightarrow{\quad g \quad} \ell^2(\mathbb{N};\mathbb{R}^3)\dslash S_\infty
\xrightarrow{\quad f \quad} H.
\]
Given a constant-degree expander graph on $n$ vertices, we take $X$ to be the vertex set with path-distance metric.
Let $Y$ denote the same vertex set, but with the ``snowflaked'' metric defined by $d_Y(u,v)=d_X(u,v)^{1/2}$.
The map $k\colon X\to Y$ sends each vertex to itself.
Note that the optimal bilipschitz bounds of $k$ are $1$ and $\sqrt{\operatorname{diam}(X)}$.
Since $\operatorname{diam}(X)=O(\log n)$ by~\cite{Chung:89}, it follows that $\operatorname{dist}(k)=O(\sqrt{\log n})$.
Next, $Z$ is an $n$-point subspace of $\mathbb{R}^{3\times N}/S_N$ with quotient Frobenius metric, for some appropriately large $N$.
In particular, $h$ maps each $x_i\in Y$ to $S_N\cdot A_i$, where the columns of $A_i\in\mathbb{R}^{3\times N}$ are $1/\sqrt{n}$ times the members of $\mathcal{C}_i$ defined in equation~(10) in~\cite{AndoniNN:16}.
By Lemma~7 in~\cite{AndoniNN:16}, it holds that $\operatorname{dist}(h)\leq 2$ provided $N$ is appropriately large.
Let $R$ denote the maximum column norm in $\{A_i\}_{i=1}^n$, and select $v\in\mathbb{R}^3$ of norm $4R$.
For each $i\in\{1,\ldots,n\}$, define $u_i\in\ell^2(\mathbb{N};\mathbb{R}^3)$ by
\[
(u_i)_j=\left\{\begin{array}{cl} (A_i)_j+v & \text{if } j\leq N\\
0 & \text{if } j>N,\end{array}\right.
\]
where $(A_i)_j$ denotes the $j$th column of $A_i$.
Then $g\colon S_N\cdot A_i \mapsto [u_i]$ is an isometry.
Overall,
\[
\operatorname{dist}(f\circ g\circ h\circ k)
\leq \operatorname{dist}(f)\cdot \operatorname{dist}(g)\cdot\operatorname{dist}(h)\cdot\operatorname{dist}(k)
\lesssim \operatorname{dist}(f)\cdot \sqrt{\log n}.
\]
Meanwhile, Proposition~4.2 in~\cite{LinialLR:95} gives that $\operatorname{dist}(f\circ g\circ h\circ k)\gtrsim \log n$.
Combining with the above then gives $\operatorname{dist}(f)\gtrsim \sqrt{\log n}$.
By~\cite{LubotzkyPS:88}, we can take $n$ to be arbitrarily large, and so $\operatorname{dist}(f)=\infty$, as desired.
\end{proof}

\section{Quotients by translation}
\label{sec.translation}

Translation invariance is prevalent in signal and image processing.
For example, $\mathbb{Z}$ acts on $\ell^2(\mathbb{Z})$ by translation, and so one is inclined to consider the metric quotient $\ell^2(\mathbb{Z})\dslash\mathbb{Z}$.
In fact, $\ell^2(\mathbb{Z})\dslash\mathbb{Z}=\ell^2(\mathbb{Z})/\mathbb{Z}$ by taking $G:=X:=\mathbb{Z}$ and $\Omega:=\{0\}$ in the following.

\begin{lemma}
\label{lem: closed orbits}
Let $G$ be a second countable locally compact group acting freely on a measure space $X$ by measure space automorphisms.
Assume this action admits a measurable fundamental domain $\Omega \subseteq X$ with a measure $\nu$ such that $L^2(\Omega,\nu)$ is separable and the mapping $G \times (\Omega,\nu) \to X$ given by $(g,\omega) \mapsto g \cdot \omega$ is a measure space isomorphism.
Then $G$ has closed orbits in $L^2(X)$.
{In particular, $L^2(X)/G=L^2(X)\dslash G$ is a metric space.}
\end{lemma}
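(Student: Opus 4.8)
The plan is to use the fundamental-domain hypothesis to recognize the action of $G$ on $L^2(X)$ as a multiple of the left regular representation of $G$, and then to exploit the classical fact that the matrix coefficients of the regular representation vanish at infinity. Concretely, I would first fix a left Haar measure $m$ on $G$ and transport the product measure $m\times\nu$ along the given isomorphism $G\times(\Omega,\nu)\xrightarrow{\ \sim\ }X$, $(g,\omega)\mapsto g\cdot\omega$; this identifies the $G$-action on $X$ with the action of $G$ on $G\times\Omega$ by left translation in the first coordinate, $h\cdot(g,\omega)=(hg,\omega)$. Since left translation preserves $m$, this action is measure preserving, so the induced Koopman action of $G$ on $L^2(X)$ is by unitaries, and $\|h\cdot f\|=\|f\|$ for all $h\in G$ and $f\in L^2(X)$. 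Under the unitary identification $L^2(X)\cong L^2(G)\,\widehat{\otimes}\,L^2(\Omega,\nu)$ (Hilbert space tensor product) coming from $X\cong G\times\Omega$, this action becomes $\lambda\otimes I$, where $\lambda$ is the left regular representation of $G$; in particular it is strongly continuous.

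Next I would record the key input: every matrix coefficient $g\mapsto\langle\lambda(g)\varphi,\psi\rangle$ lies in $C_0(G)$. For $\varphi,\psi\in C_c(G)$ this coefficient is continuous and supported in the compact set $(\operatorname{supp}\psi)(\operatorname{supp}\varphi)^{-1}$, and the general case follows by uniform approximation from the bound $\|\langle\lambda(\cdot)\varphi,\psi\rangle-\langle\lambda(\cdot)\varphi',\psi'\rangle\|_\infty\le\|\varphi-\varphi'\|\,\|\psi\|+\|\varphi'\|\,\|\psi-\psi'\|$ together with the density of $C_c(G)$ in $L^2(G)$. Tensoring with $I$, using the density of elementary tensors and the uniform bound $\|\lambda(g)\otimes I\|=1$, the same conclusion holds for $\lambda\otimes I$. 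Equivalently: if a sequence $(h_k)$ escapes to infinity in $G$ (i.e.\ eventually leaves every compact set), then $h_k\cdot f\rightharpoonup 0$ weakly in $L^2(X)$ for every $f$.

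To close the argument, fix $f\in L^2(X)$; if $f=0$ the orbit is the closed set $\{0\}$, so assume $f\neq0$ and suppose $h_k\cdot f\to g$. Then $\|g\|=\lim\|h_k\cdot f\|=\|f\|\neq0$. As $G$ is second countable, locally compact and Hausdorff, it is metrizable and $\sigma$-compact, so $(h_k)$ either has a subsequence converging in $G$ or escapes to infinity; the latter would force $h_k\cdot f\rightharpoonup0$ and hence $g=0$, a contradiction. So along a subsequence $h_{k_j}\to h\in G$, and strong continuity gives $h_{k_j}\cdot f\to h\cdot f$; comparing limits yields $g=h\cdot f\in G\cdot f$. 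Thus $G$ has closed orbits in $L^2(X)$, and the last sentence follows from Lemma~\ref{lem.metric quotient orbit closure} together with the fact that $X\dslash G$ is always a metric space.

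I expect the main obstacle to be the bookkeeping in the reduction step: verifying that the hypotheses genuinely conjugate the Koopman action on $L^2(X)$ onto $\lambda\otimes I$, and in particular that one must use \emph{left} Haar measure and \emph{left} translation so that the action is measure preserving, since for non-unimodular $G$ a careless choice breaks this and the $L^2$-action is then only by weighted composition operators. Once the representation is pinned down, the remainder is a soft weak-convergence argument with essentially no computation.
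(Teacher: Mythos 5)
Your proposal is correct, and its backbone coincides with the paper's: you use the fundamental-domain hypothesis to identify $L^2(X)$ with $L^2(G)\,\widehat{\otimes}\,L^2(\Omega,\nu)\cong L^2\bigl(G;L^2(\Omega,\nu)\bigr)$ (this is exactly where separability of $L^2(\Omega,\nu)$ and second countability of $G$ enter, via the Reed--Simon tensor decomposition), so that the action becomes (a multiple of) left translation, and your key input --- matrix coefficients of this representation lie in $C_0(G)$ --- is precisely Proposition~\ref{prop: C0}(b). Where you diverge is the endgame. The paper avoids sequences altogether: from $\psi\in\overline{\{L_g\varphi\}}$ it extracts a point $g_0$ where $\operatorname{Re}\langle L_{g_0}\varphi,\psi\rangle>0$, uses the $C_0$ property to confine the supremum of the matrix coefficient to a compact set, attains it by the extreme value theorem at some $h$, and concludes $\|L_h\varphi-\psi\|=\inf_g\|L_g\varphi-\psi\|=0$. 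You instead run a sequential dichotomy: either $(h_k)$ escapes to infinity, in which case $h_k\cdot f\rightharpoonup 0$ weakly (again the $C_0$ property) contradicting $\|g\|=\|f\|\neq 0$, or a subsequence converges in $G$, in which case strong continuity of the representation (the analogue of Proposition~\ref{prop: C0}(a)) identifies the limit as a point of the orbit. Your route is sound, but note it needs two extra (true) facts the paper's endgame does not: metrizability of $G$ (available by second countability, e.g.\ Birkhoff--Kakutani or Urysohn) to get the convergent-subsequence/escape dichotomy, and strong continuity of the full representation at the limit point. The paper's version buys a slightly more economical argument --- it in effect shows the distance from $\psi$ to the orbit is attained --- while yours gives a somewhat more conceptual picture (orbits of a $C_0$ representation are weakly closed at infinity and locally compact near finite group elements); both are valid proofs of the lemma, and both conclude as you do that $L^2(X)/G=L^2(X)\dslash G$ via Lemma~\ref{lem.metric quotient orbit closure}.
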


We postpone the proof of Lemma~\ref{lem: closed orbits}.
What follows is our main result in this section:

\begin{theorem}
\label{thm: l2ZmodZ}
For each $n \in \mathbb{N}$, consider a measure space $X_n$ along with a second countable locally compact abelian group $G_n$ that acts freely on $X_n$ by measure space isomorphisms and contains a discrete cyclic subgroup of order at least $n$.
Suppose this action admits a measurable fundamental domain $\Omega_n \subseteq X_n$ with a nonzero $\sigma$-finite measure $\nu_n$ such that $L^2(\Omega_n,\nu_n)$ is separable and the mapping ${G_n \times (\Omega_n,\nu_n)} \to X_n$ given by $(g,\omega)\mapsto g\cdot \omega$ is a measure space isomorphism.
Then
\begin{equation}
\label{eq: l2ZmodZ 1}
c_2\bigl( \ell^2(\mathbb{Z})/\mathbb{Z} \bigr) \leq \liminf_{n\to \infty} c_2\bigl( L^2(X_n)/G_n \bigr).
\end{equation}
\end{theorem}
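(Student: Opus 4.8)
The plan is to reduce the left-hand side to finite metric spaces and then, for each finite subset of $\ell^2(\mathbb{Z})/\mathbb{Z}$, to construct an \emph{isometric} embedding into $L^2(X_n)/G_n$ for all sufficiently large $n$. First, the set $Y$ of classes $[x]$ with $x$ finitely supported is dense in $\ell^2(\mathbb{Z})/\mathbb{Z}$, since the finitely supported sequences are dense in $\ell^2(\mathbb{Z})$ and the quotient map is $1$-Lipschitz; hence Lemma~\ref{lem.euclidean distortion dense} gives $c_2(\ell^2(\mathbb{Z})/\mathbb{Z})=\sup\{c_2(F):F\subseteq Y,\ |F|<\infty\}$. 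Fix such an $F$. Translating representatives does not change their classes, so we may assume every element of $F$ has a representative supported in $\{0,1,\dots,L-1\}$ for some $L\in\mathbb{N}$. It then suffices to show that for every $n\geq 2L$ there is an isometric embedding of $F$ into $L^2(X_n)/G_n$: this gives $c_2(F)\leq c_2(L^2(X_n)/G_n)$ for all $n\geq 2L$, hence $c_2(F)\leq \liminf_{n\to\infty}c_2(L^2(X_n)/G_n)$, and taking the supremum over $F$ proves the theorem.

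To build the embedding, use the measure space isomorphism $(g,\omega)\mapsto g\cdot\omega$ to identify $L^2(X_n)\cong L^2(G_n)\otimes L^2(\Omega_n,\nu_n)$, under which $G_n$ acts by translation on the first tensor factor; since $\nu_n\neq 0$, fix a unit vector $\xi\in L^2(\Omega_n,\nu_n)$. Let $C=\langle c\rangle\leq G_n$ be a discrete cyclic subgroup of order at least $n\geq 2L$, so $C\cong\mathbb{Z}$ or $C\cong\mathbb{Z}/m\mathbb{Z}$ with $m\geq 2L$. Because $C$ is discrete and $G_n$ abelian, continuity of the group operations provides a relatively compact open identity neighborhood $U$ with $(U-U)+(U-U)$ meeting $C$ only at the identity; write $\mu$ for Haar measure on $G_n$, so $0<\mu(U)<\infty$. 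Put $\psi_j:=\mu(U)^{-1/2}\mathbf{1}_{jc+U}\in L^2(G_n)$; the choice of $U$ makes $\{\psi_j\}$ orthonormal (periodic with period $m$ in the finite case), $\psi_j$ is the translate of $\psi_0$ by $jc$, and for $x$ supported in $\{0,\dots,L-1\}$ I set $\Psi x:=\bigl(\sum_{j=0}^{L-1}x_j\psi_j\bigr)\otimes\xi$. Since translating $x$ by the shift $T$ corresponds to translating $\Psi x$ by $c\in G_n$, the class $[\Psi x]$ depends only on $[x]$, and it remains to check that $[x]\mapsto[\Psi x]$ is isometric on $F$.

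The cleanest route uses the identity $d([x],[y])^2=\|x\|^2-2\llangle[x],[y]\rrangle+\|y\|^2$ from the excerpt: since $\|\Psi x\|=\|x\|$, it suffices to show $\llangle[\Psi x],[\Psi y]\rrangle=\llangle[x],[y]\rrangle$ for representatives supported in $\{0,\dots,L-1\}$. As the relevant orbits are closed (Lemma~\ref{lem: closed orbits}), $\llangle[\Psi x],[\Psi y]\rrangle=\sup_{t\in G_n}\langle\Psi x,t\cdot\Psi y\rangle$ and $\llangle[x],[y]\rrangle=\max_{k\in\mathbb{Z}}\langle x,T^k y\rangle$. The key computation is that $\langle\psi_\ell,t\cdot\psi_j\rangle$ vanishes unless $t\in(\ell-j)c+(U-U)$, in which case, writing $t=(\ell-j)c+v$, translation invariance of $\mu$ gives $\langle\psi_\ell,t\cdot\psi_j\rangle=\rho(v):=\mu\bigl(U\cap(v+U)\bigr)/\mu(U)\in[0,1]$, a value depending only on $v$ (and $\rho(0)=1$). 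Here the choice of $U$ ensures a given $t$ picks out at most one difference $\ell-j$, and the hypothesis $m\geq 2L$ prevents the support from wrapping around when $C$ is finite; together these yield $\langle\Psi x,t\cdot\Psi y\rangle=\rho(v)\cdot\langle x,T^{k}y\rangle$ for the corresponding integer shift $k$ (and $0$ when $t\notin C+(U-U)$). Since $0\leq\rho\leq 1$ this is at most $\max_k\langle x,T^k y\rangle$ (which is nonnegative), while choosing $t=k^{\ast}c$ at an optimal shift $k^{\ast}$, where $\rho(0)=1$, attains it. Hence the two max filtering values coincide and $[x]\mapsto[\Psi x]$ is an isometry on $F$.

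The main obstacle is this last verification. One has to pick the bump $\psi_0$ supported on a small enough set that translating by an arbitrary element of $G_n$ --- not just by the cyclic subgroup $C$ --- produces no correlations beyond those already present among integer shifts of $\ell^2(\mathbb{Z})$, and one has to exclude the cyclic ``wraparound'' that would otherwise occur once $C$ is finite. This is precisely why the hypothesis furnishes a cyclic subgroup whose order tends to infinity with $n$ and why, at stage $n$, we only embed the part of $\ell^2(\mathbb{Z})/\mathbb{Z}$ coming from sequences of support length at most $n/2$.
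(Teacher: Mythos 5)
Your proposal is correct and follows essentially the same route as the paper: reduce to finite sets of finitely supported classes via Lemma~\ref{lem.euclidean distortion dense}, embed them using translates of a small bump indexed by a discrete cyclic subgroup of order at least twice the support length, and verify the embedding preserves max filtering (with the same use of nonnegativity of the optimal shift correlation to control the upper bound). The only difference is cosmetic: you work in the tensor-product picture $L^2(G_n)\otimes L^2(\Omega_n,\nu_n)$ with bumps $\mathbf{1}_{jc+U}\otimes\xi$ and a neighborhood $U$ satisfying $(U-U)+(U-U)\cap C=\{1\}$, whereas the paper works directly with the sets $h^i\cdot B$ for $B=W\cdot E\subseteq X_n$ and a compact symmetric $W$ with $WWWW\subseteq U$, which amounts to the same disjointness argument.
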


Before proving Theorem~\ref{thm: l2ZmodZ}, we consider several examples.

\begin{example}
For each $n \in \mathbb{N}$, let $G_n := X_n := C_n \leq \mathbb{T}$ act on itself by multiplication, and take $\Omega_n := \{1\}$ for a fundamental domain.
Then Theorem~\ref{thm: l2ZmodZ} implies
\[
c_2\bigl( \ell^2(\mathbb{Z})/\mathbb{Z} \bigr) \leq \liminf_{n\to \infty} c_2\bigl( \ell^2(C_n)/C_n \bigr).
\]
\end{example}

\begin{example}
The rotation group $G := SO(2) \cong \mathbb{T}$ has discrete cyclic subgroups of all finite orders, and it acts freely on the punctured plane $X := \mathbb{R}^2\setminus \{0\}$.
Then the fundamental domain $\Omega := \{ (r,0) : r > 0 \}$ has measure $d\nu_n\bigl( (r,0) \bigr) := r\, dr$, and thanks to polar coordinates, there is a measure space isomorphism $G \times \Omega \cong X$.
(Here we scale Haar measure on $SO(2)$ to have total measure~$2\pi$.)
{Taking $G_n=G$ and $\Omega_n=\Omega$ for every $n$ in Theorem~\ref{thm: l2ZmodZ} gives}
\[
c_2\bigl( \ell^2(\mathbb{Z})/\mathbb{Z} \bigr) \leq c_2\bigl( L^2(\mathbb{R}^2)/SO(2) \bigr).
\]
\end{example}

\begin{example}
Let $\mathcal{G}$ be a second countable locally compact group, and let $G \leq \mathcal{G}$ be a closed abelian subgroup with arbitrarily large discrete cyclic subgroups.
(For instance, $\mathcal{G}=G=\mathbb{T}$, or $\mathcal{G} = \mathbb{R}$ and $G=\mathbb{Z}$.)
Consider the action of $G$ on $X:=\mathcal{G}$ by left multiplication.
By Theorem~3.6 in~\cite{Iverson:15}, there is a measurable fundamental domain $\Omega \subseteq \mathcal{G}$ with a $\sigma$-finite measure $\nu$ such that $L^2(\Omega,\nu)$ is separable and the mapping $G\times (\Omega,\nu) \to \mathcal{G}$ given by $(g,\omega)\mapsto g\omega$ is a measure space isomorphism.
{Then applying Theorem~\ref{thm: l2ZmodZ} with $G_n=G$ and $\Omega_n=\Omega$ for every $n$ gives} $c_2\bigl( \ell^2(\mathbb{Z})/\mathbb{Z} \bigr) \leq c_2\bigl( L^2(\mathcal{G})/G \bigr)$.
In particular, all of the following are bounded below by $c_2\bigl( \ell^2(\mathbb{Z})/\mathbb{Z} \bigr)$:
\[
c_2\bigl( L^2(\mathbb{T}^d)/\mathbb{T}^d \bigr),
\qquad
c_2 \bigl( L^2(\mathbb{R}^d)/\mathbb{R}^d \bigr),
\qquad
c_2 \bigl( L^2(\mathbb{R}^d)/\mathbb{Z}^d \bigr).
\]
\end{example}

These examples suggest a fundamental problem.

\begin{problem}
\label{prob.embed l2(Z)modZ}
Determine whether there exists a bilipschitz embedding of $\ell^2(\mathbb{Z})/\mathbb{Z}$ into some Hilbert space, and if so, find the exact value of $c_2\bigl( \ell^2(\mathbb{Z})/\mathbb{Z} \bigr)$.
\end{problem}

One may use the semidefinite program in Proposition~\ref{prop.sdp} to establish
\[
c_2\bigl( \ell^2(\mathbb{Z})/\mathbb{Z} \bigr)
\geq c_2(X)
\geq 1.3,
\]
where $X$ is a finite subspace of $\ell^2(\mathbb{Z})/\mathbb{Z}$.
This is the extent of our progress on Problem~\ref{prob.embed l2(Z)modZ}.

We first prove Lemma~\ref{lem: closed orbits}.
Our argument is inspired by~\cite{BarbieriHP:15}.
It involves spaces of vector-valued functions.
Given a measure space $(X,\mu)$ and a Hilbert space $H$, a function $\varphi \colon X \to H$ is \textbf{measurable} when the pre-image of every open set in $H$ is measurable in $X$.
Equivalently, for every $v \in H$, the complex-valued function $x \mapsto \langle \varphi(x), v \rangle$ is measurable on $X$~\cite{Pettis:38}.
We consider measurable functions $\varphi,\psi \colon X \to H$ to be equivalent if they differ only on a set of measure zero.
Then $L^2(X;H)$ is defined as the space of all equivalence classes of measurable functions $\varphi \colon X \to H$ for which $\int_X \| \varphi(x) \|_{H}^2 \, d\mu(x) < \infty$.
It is a Hilbert space with inner product $\langle \varphi, \psi \rangle := \int_X \langle \varphi(x), \psi(x)\rangle_{H} \, d\mu(x)$.
When $X$ is a group, one may define left translation in this space, and under the appropriate hypotheses, this operator is continuous:

\begin{proposition}
\label{prop: C0}
Let $G$ be a (multiplicative) second countable locally compact group, and let $H$ be a separable Hilbert space.
Given $g\in G$, let $L_g$ denote the left translation operator that transforms $\varphi\colon G \to X$ to $L_g \varphi \colon G \to X$ defined by $(L_g \varphi)(h) = \varphi(g^{-1} h)$ for $h \in G$.
The following hold for any $\varphi, \psi \in L^2(G;H)$.
	\begin{itemize}
	\item[(a)]
	The function $G \to L^2(G;H)$ that maps $g \mapsto L_g \varphi$ is continuous.
	\item[(b)]
	The function $G \to \mathbb{C}$ that maps $g \mapsto \langle L_g \varphi, \psi \rangle$ resides in $C_0(G)$.
	\end{itemize}
\end{proposition}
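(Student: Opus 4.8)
The plan is to prove (a) first, then deduce (b) as an easy consequence. For part (a), the natural approach is a density argument. The key point is that $L^2(G;H)$ contains a dense subspace of ``nice'' functions on which left translation is visibly continuous, and then one transfers continuity to the whole space using the fact that each $L_g$ is an isometry. Concretely, I would first observe that $C_c(G;H)$, the space of continuous compactly supported $H$-valued functions, is dense in $L^2(G;H)$; this follows from the separability of $H$ (so that simple functions $\sum_i \mathbf{1}_{E_i} v_i$ are dense) together with the regularity of Haar measure (so that each $\mathbf{1}_{E_i}$ can be approximated in $L^2$-norm by a function in $C_c(G)$). For $\varphi \in C_c(G;H)$, uniform continuity of $\varphi$ on $G$ (a compactly supported continuous function on a locally compact group is left uniformly continuous) gives that $\|L_g\varphi - \varphi\|_\infty \to 0$ as $g \to e$, and since the supports stay inside a fixed compact set as $g$ ranges over a compact neighborhood of $e$, this upgrades to $\|L_g\varphi-\varphi\|_{L^2(G;H)}\to 0$. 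Then for general $\psi\in L^2(G;H)$ and $\epsilon>0$, pick $\varphi\in C_c(G;H)$ with $\|\psi-\varphi\|<\epsilon$; using that $L_g$ is a linear isometry of $L^2(G;H)$ (because Haar measure is left-invariant and $L_g$ acts pointwise by the unitary ``do nothing to $H$''), the triangle inequality gives
\[
\|L_g\psi - \psi\| \leq \|L_g\psi - L_g\varphi\| + \|L_g\varphi - \varphi\| + \|\varphi - \psi\| < 2\epsilon + \|L_g\varphi-\varphi\|,
\]
so $\limsup_{g\to e}\|L_g\psi-\psi\| \leq 2\epsilon$. Continuity at an arbitrary $g_0$ then follows from continuity at $e$ via $L_{g} = L_{g_0}L_{g_0^{-1}g}$ and the isometry property.

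For part (b), the function $g\mapsto \langle L_g\varphi,\psi\rangle$ is continuous by part (a) (it is the composition of the continuous map $g\mapsto L_g\varphi$ with the continuous functional $\langle\cdot,\psi\rangle$), so it remains to show it vanishes at infinity. I would again use density: for $\varphi,\psi \in C_c(G;H)$ with supports contained in compact sets $K_\varphi$ and $K_\psi$, the inner product $\langle L_g\varphi,\psi\rangle = \int_G \langle \varphi(g^{-1}h),\psi(h)\rangle\,dh$ is supported (as a function of $g$) on the compact set $K_\psi K_\varphi^{-1}$, hence has compact support and is trivially in $C_0(G)$. For general $\varphi,\psi\in L^2(G;H)$, approximate both in $L^2$-norm by compactly supported continuous functions; since $C_0(G)$ is closed under uniform limits and $|\langle L_g\varphi,\psi\rangle - \langle L_g\varphi',\psi'\rangle| \leq \|\varphi-\varphi'\|\|\psi\| + \|\varphi'\|\|\psi-\psi'\|$ uniformly in $g$ (again using that $L_g$ is an isometry), the general function is a uniform limit of compactly supported continuous functions and therefore lies in $C_0(G)$.

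The main obstacle I anticipate is the vector-valued density statement: verifying carefully that $C_c(G;H)$ is dense in $L^2(G;H)$ requires combining the separability of $H$ (to reduce to countably many ``directions'') with inner regularity of Haar measure and Urysohn-type constructions on the locally compact group $G$. Everything else is a routine $\epsilon$-argument built on the single structural fact that left translation acts isometrically on $L^2(G;H)$ because Haar measure is left-invariant and the $H$-norm is untouched. One should also take a moment to confirm measurability is preserved under translation — but this is immediate since $g\mapsto g^{-1}h$ is a homeomorphism of $G$, so $L_g\varphi$ is measurable whenever $\varphi$ is.
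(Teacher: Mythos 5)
Your proposal is correct, but it routes both parts differently from the paper, in ways worth noting. For (a), the paper does not use $C_c(G;H)$ at all: it decomposes $\varphi$ into a finite sum of pure tensors $\sum_j f_j\otimes u_j$ via the identification $L^2(G;H)\cong L^2(G)\otimes H$ (Reed--Simon, Theorem~II.10, which is where separability of $H$, through separability of $L^2(G)$, enters), and then invokes the known continuity of translation on scalar $L^2(G)$ (Folland, Proposition~2.41) for each $f_j$, finishing with the same triangle-inequality/isometry argument you use. Your route instead proves the vector-valued density of $C_c(G;H)$ and redoes the uniform-continuity argument for $H$-valued functions directly; this is more self-contained but places the real work exactly where you flagged it, in the density lemma (Pettis measurability plus separability of $H$ to get strong measurability and density of simple functions, then inner regularity of Haar measure and Urysohn to pass to $C_c(G)$) -- your sketch of that lemma is sound. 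For (b), the paper avoids $C_c$ approximation entirely: it extracts compact sets $K_1,K_2$ outside of which $\varphi$ and $\psi$ are small in $L^2$, splits $\langle L_g\varphi,\psi\rangle$ with indicator functions, and uses disjointness of $K_1$ and $g^{-1}K_2$ for $g\notin K_2K_1^{-1}$ to bound the correlation by $\epsilon$ directly; you instead observe that the correlation of two $C_c(G;H)$ functions is supported in the compact set $K_\psi K_\varphi^{-1}$ and then pass to uniform limits via the Cauchy--Schwarz estimate $|\langle L_g\varphi,\psi\rangle-\langle L_g\varphi',\psi'\rangle|\le\|\varphi-\varphi'\|\,\|\psi\|+\|\varphi'\|\,\|\psi-\psi'\|$ and closedness of $C_0(G)$ under uniform convergence. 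The two arguments for (b) rest on the same underlying mechanism (translates of essentially compactly supported functions eventually have essentially disjoint supports), so the trade-off is mainly one of bookkeeping: the paper's version outsources less to a dense subspace but manipulates $L^2$ tails by hand, while yours is cleaner once the density of $C_c(G;H)$ is in place. Both proofs, like the paper's, hinge on $L_g$ being an isometry of $L^2(G;H)$ and on the separability of $H$.
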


Proposition~\ref{prop: C0} is standard, but for the sake completeness, we give its proof in the appendix.
In~(a), it is vital that $H$ is separable.
For example, consider $G = \mathbb{R}$ and $H=\ell^2(\mathbb{R})$.
Choose any nonzero continuous function $f \in L^2(\mathbb{R})$, and let $\varphi \colon G \to H$ be the function with $\varphi(x) = f(x) \delta_x$, where $\delta_x \in \ell^2(\mathbb{R})$ is the point mass at $x \in \mathbb{R}$.
For any $y \neq 0$, it holds that $\langle L_y \varphi, \varphi \rangle = 0$, and thus $\| L_y \varphi - \varphi \| = 2 \| \varphi \| = 2 \| f \|$.
Hence, the function $y \mapsto L_y \varphi$ is discontinuous at~$0$.

\begin{proof}[Proof of Lemma~\ref{lem: closed orbits}]
We use multiplicative notation for the group operation in~$G$.
Given $g \in G$ and $\varphi \in L^2\big( G; L^2(\Omega,\nu) \big)$, let $L_g \varphi$ denote the left translation of $\varphi$ defined by $(L_g \varphi)(h)=\varphi(g^{-1}h)$ for $h \in G$.
We claim there is a unitary isomorphism $L^2(X) \to L^2\big( G; L^2(\Omega,\nu) \big)$ that converts the action of $G$ on $L^2(X)$ into left translation.
To see this, we first note that $L^2(G)$ is separable since $G$ is second countable; see (16.2) and (16.12) in~\cite{HewittR:63}.
Then Theorem~II.10 in~\cite{ReedS:72} gives an isomorphism $L^2(G \times \Omega) \to L^2\big( G; L^2(\Omega,\nu) \big)$ which, when precomposed with the isomorphism $L^2(X) \to L^2(G \times \Omega)$, produces a unitary $U \colon L^2(X) \to L^2\big( G; L^2(\Omega,\nu) \big)$ such that $(U f)(g)(\omega)=f(g\cdot \omega)$ for all $f \in L^2(X)$, $g \in G$, and $\omega \in \Omega$.
This proves the claim.

Thus, it suffices to show that $G$ has closed orbits in $L^2\big( G; L^2(\Omega,\nu) \big)$.
Suppose $\varphi,\psi \in L^2\big( G; L^2(\Omega,\nu) \big)$ and $\psi$ is in the closure of $\{ L_g \varphi : g \in G\}$.
We claim that $\psi = L_h \varphi$ for some $h \in G$.
This is clear when $\varphi = 0$, so we may assume otherwise.
For $g \in G$,
\begin{equation}
\label{eq: closed orbits 1}
\| L_g \varphi - \psi \|^2 
= \| \varphi \|^2 + \| \psi \|^2 - 2 \operatorname{Re} \langle L_g \varphi, \psi \rangle.
\end{equation}
Since the left-hand side of \eqref{eq: closed orbits 1} can be made arbitrarily small and $\|\varphi\|^2>0$ by assumption, there exists $g_0 \in G$ for which $\epsilon:= \operatorname{Re} \langle L_{g_0} \varphi, \psi \rangle > 0$.
By Proposition~\ref{prop: C0}, the function $g \mapsto \operatorname{Re} \langle L_g \varphi, \psi \rangle$ resides in $C_0(G)$.
In particular, there is a compact set $K \subseteq G$ such that $| \operatorname{Re} \langle L_g \varphi, \psi \rangle | < \epsilon/2$ for all $g \notin K$.
Then $g_0 \in K$, and
\[
\sup_{g\in G} \operatorname{Re} \langle L_g \varphi, \psi \rangle = \sup_{g\in K} \operatorname{Re} \langle L_g \varphi, \psi \rangle=:M.
\]
By the extreme value theorem, $M=\operatorname{Re} \langle L_h \varphi, \psi \rangle$ for some $h \in K$.
Then~\eqref{eq: closed orbits 1} implies
\[
\| L_h \varphi - \psi \|^2 = \inf_{g\in G} \| L_g \varphi - \psi \|^2 = 0.
\qedhere
\]
\end{proof}

\begin{proof}[Proof of Theorem~\ref{thm: l2ZmodZ}]
Consider the dense subset
\[
c_{00}(\mathbb{Z})/\mathbb{Z}
= \Big\{ [a] \in \ell^2(\mathbb{Z})/\mathbb{Z} : a_k = 0 \text{ for all but finitely many }k\in \mathbb{Z} \Big\}.
\]
Given finite $F \subseteq c_{00}(\mathbb{Z})/\mathbb{Z}$, select $N \in \mathbb{N}$ such that each $[a] \in F$ has a representative with support contained in $\{0,\dotsc,N\}$.
For each $n > 2N$, we will construct an isometric embedding $\phi \colon F \to L^2(X_n)/G_n$, implying $c_2(F) \leq c_2\big( L^2(X_n)/G_n \big)$.
Then Lemma~\ref{lem.euclidean distortion dense} gives
\[
c_2\big( \ell^2(\mathbb{Z})  / \mathbb{Z} \big)
= \sup\{c_2(F) : F \subseteq c_{00}(\mathbb{Z})/\mathbb{Z}, |F|<\infty \}
\leq \liminf_{n\to \infty}  c_2\big( L^2(X_n)/G_n \big).
\]

First, we construct a critical set $B \subseteq X_n$, whose normalized indicator function will emulate a point mass $\delta_0 \in \ell^2(\mathbb{Z})$.
Let $\mu$ be the measure in $X_n$.
Express the group operation in $G_n$ multiplicatively, and write $|S|$ for the Haar measure of Borel $S \subseteq G_n$.
By hypothesis, there exists $h \in G_n$ that generates a discrete subgroup $H \leq G_n$ of order at least $2N+1$.
Then there is a neighborhood $U\subseteq G_n$ of $1$ that fails to intersect $H \setminus \{1\}$.
By Proposition~2.1 of~\cite{Folland:95}, $U$ contains a neighborhood $V \subseteq G_n$ of $1$ such that $VV \subseteq U$, and $V$ in turn contains a compact neighborhood $W \subseteq G_n$ of $1$ such that $W^{-1} = W$ and $WW \subseteq V$.
{(While Proposition~2.1 of~\cite{Folland:95} does not directly provide compactness of $W$, this may be obtained by replacing $W$ with a compact neighborhood of $1$ contained in $W$ and then intersecting with its inverse.)}
Since $W$ is {compact and contains an open set}, it has finite positive measure {by Proposition~2.19 of~\cite{Folland:95}}.
Meanwhile, since $\nu_n$ is $\sigma$-finite and nonzero, there exists measurable $E \subseteq \Omega_n$ with $0 < \nu_n(E) < \infty$.
Put 
\[
B
:=\{ g\cdot x : g \in W, x \in E \}
\subseteq X_n.
\]
Then $\mu(B) = |W| \nu(E) \in (0,\infty)$ by the isomorphism $G_n \times (\Omega_n,\nu_n) \cong X_n$.

Next, we claim that
\begin{equation}
\label{eq: disjointness of translates}
\text{for every $g \in G_n$, there is at most one $k \in H$ for which $g \cdot B$ intersects $k \cdot B$.}
\end{equation}
Indeed, suppose $g \cdot B$ intersects $k_1 \cdot B$ and $k_2 \cdot B$ for $k_1,k_2 \in H$.
Then for each $i \in \{1,2\}$, there exist $v_i,w_i \in W$ and $x_i,y_i \in \Omega_n$ with $g v_i \cdot x_i = k_i w_i \cdot y_i$.
Since $\Omega_n$ is a fundamental domain for the free action of $G_n$, it follows that $x_i = y_i$ and $g v_i = k_i w_i$.
Then $k_1 w_1 v_1^{-1} = g = k_2 w_2 v_2^{-1}$, and $k_2^{-1} k_1 = w_2 v_2^{-1} v_1 w_1^{-1}$.
By the conditions on $U$, $V$, and $W$, we conclude that $k_2^{-1} k_1 \in U$ and $k_1 = k_2$.

With the aid of $B$, we now define an isometry $\phi \colon F \to L^2(X_n)/G_n$.
Denote the indicator function of a set $S$ by $\mathbf{1}_S$.
For each $[a] \in F$, fix a representative $a^* \in c_{00}(\mathbb{Z})$ with support contained in $\{0,\dotsc,N\}$.
Then define 
\[ \varphi(a^*) := \mu(B)^{-1/2} \sum_{i=0}^N a_i^* \mathbf{1}_{h^i \cdot B} \in L^2(X_n) \]
and $\phi([a]) := [\varphi(a^*)] \in L^2(X_n)/G_n$.

In the remainder of the proof, we verify $\phi$ preserves max filters, and is therefore an isometry.
For $[a],[b] \in F$, we have
\begin{align}
\label{eq: max filter as sum}
\llangle \phi([a]), \phi([b]) \rrangle
\notag &= \sup_{g \in G_n} \operatorname{Re} \bigg\langle \mu(B)^{-1/2} \sum_{i=0}^N a_i^* \mathbf{1}_{g h^i \cdot B}, \, \mu(B)^{-1/2} \sum_{j=0}^N b_j^* \mathbf{1}_{h^j \cdot B} \bigg\rangle \\
\notag &= \sup_{g\in G_n} \operatorname{Re} \sum_{i,j=0}^N \overline{a_i^*} b_j^* \frac{\mu(gh^i \cdot B \cap h^j \cdot B)}{\mu(B)} \\
&= \sup_{g\in G_n} \operatorname{Re} \sum_{i,j \in \mathbb{Z}} \overline{a_i^*} b_j^* \frac{\mu(B \cap gh^{i-j} \cdot B)}{\mu(B)},
\end{align}
where the last step uses the facts that $G_n$ is abelian and preserves measure in $X_n$.

For a lower bound, \eqref{eq: max filter as sum} implies
\[
\llangle \phi([a]), \phi([b]) \rrangle
\geq \sup_{k \in \mathbb{Z}} \operatorname{Re} \sum_{i\in \mathbb{Z}} \sum_{j\in \mathbb{Z}} \overline{a_i^*} b_j^* \frac{\mu(B \cap h^{i-j+k} \cdot B)}{\mu(B)}.
\]
In the inner sum, every term vanishes besides the one with $j=i+k$, by~\eqref{eq: disjointness of translates}.
Therefore,
\[
\llangle \phi([a]), \phi([b]) \rrangle 
\geq \sup_{k \in \mathbb{Z}} \operatorname{Re} \sum_{i\in \mathbb{Z}} \overline{a_i^*} b_{i+k}^*
= \llangle [a], [b] \rrangle.
\]

To prove the matching upper bound, we first write~\eqref{eq: max filter as sum} as
\begin{equation}
\label{eq: max filter as sum 2}
\llangle \phi([a]), \phi([b]) \rrangle
= \sup_{g\in G_n} \operatorname{Re} \sum_{j=0}^N \sum_{i=0}^N \overline{a_i^*} b_j^* \frac{\mu(g\cdot B \cap h^{j-i} \cdot B)}{\mu(B)}.
\end{equation}
For each $g \in G_n$, apply~\eqref{eq: disjointness of translates} to choose an index $c(g) \in \{-N,\dotsc,N\}$ such that $g\cdot B \cap h^i B = \varnothing$ for every $i \in \{-N,\dotsc,N\}$ with $i \neq c(g)$.
(Here we use the fact that $h$ has order at least $2N+1$.)
Then every term in the inner sum of~\eqref{eq: max filter as sum 2} vanishes besides the one with $i=j-c(g)$ when it is present, so that
\begin{align*}
\llangle \phi([a]), \phi([b]) \rrangle
&= \sup_{g\in G_n} \operatorname{Re} \sum_{j=0}^N \overline{a_{j-c(g)}^*} b_j^* \frac{\mu(g\cdot B \cap h^{c(g)} \cdot B)}{\mu(B)} \\
&= \sup_{g\in G_n} \frac{\mu(B \cap g^{-1}h^{c(g)} \cdot B)}{\mu(B)} \operatorname{Re} \sum_{j\in \mathbb{Z}} \overline{a_{j-c(g)}^*} b_j^* .
\end{align*}
Denoting $t_+$ for the positive part of a number $t \in \mathbb{R}$, we have
\[
\llangle \phi([a]), \phi([b]) \rrangle
\leq \sup_{g \in G_n} \bigg( \operatorname{Re} \sum_{j\in \mathbb{Z}} \overline{a_{j-c(g)}^*} b_j^* \bigg)_+
\leq \sup_{k \in \mathbb{Z}} \bigg( \operatorname{Re} \sum_{j\in \mathbb{Z}} \overline{a_{j-k}^*} b_j^* \bigg)_+ .
\]
Since $a^*,b^* \in c_{00}(\mathbb{Z})$, there exists $k \in \mathbb{Z}$ for which $\sum_{j\in \mathbb{Z}} \overline{ a_{j-k}^* } b_j^* = 0$.
Consequently,
\[
\llangle \phi([a]), \phi([b]) \rrangle
\leq \sup_{k \in \mathbb{Z}} \operatorname{Re} \sum_{j\in \mathbb{Z}} \overline{a_{j-k}^*} b_j^*
= \llangle [a], [b] \rrangle.
\]
This completes the proof.
\end{proof}

\section*{Acknowledgments}

The authors thank Amit Singer and Ramon van Handel for a lively discussion involving a Drake meme that prompted this investigation, {as well as the anonymous reviewers for several pages of helpful and detailed suggestions that substantially improved the presentation of our results.}

\appendix

\section{Proof of Proposition~\ref{prop.lagrange inperpolation}}

First, we consider the special case in which the coordinates $(u_1)_k,\ldots,(u_n)_k$ are distinct for each $k\in\{1,\ldots,d\}$.
For this case, we construct $p\colon\mathbb{R}^d\to\mathbb{R}$ of the form
\begin{equation}
\label{eq.poly form}
p(x_1,\ldots,x_d)
=\sum_{k=1}^d p_k(x_k),
\end{equation}
where each $p_k$ is a polynomial in one variable.
Then for each $k\in\{1,\ldots,d\}$, we seek a polynomial $q_k:=p_k'$ that satisfies
\[
q_k((u_i)_k)
=\frac{\partial p}{\partial x_k}(u_i)
=(v_i)_k
\]
for all $i\in\{1,\ldots,n\}$.
This can be accomplished by Lagrange interpolation, and we integrate to get $p_k$, and then \eqref{eq.poly form} gives the desired polynomial.

Next, we generalize the above argument to treat the general case.
To do so, we select an invertible matrix $A\in\mathbb{R}^{d\times d}$ in which a way that the coordinates $(Au_1)_k,\ldots,(Au_n)_k$ are distinct for each $k\in\{1,\ldots,d\}$.
Equivalently, $A\in\mathbb{R}^{d\times d}$ satisfies $\operatorname{det}(A)\neq0$ and $e_k^\top A(u_i-u_j)\neq0$ for every $k\in\{1,\ldots,d\}$ and every $i,j\in\{1,\ldots,n\}$ with $i\neq j$.
That is, it suffices for $A$ to be generic in the sense that it does not reside in the zero set of a nonzero polynomial.
(In particular, such an $A$ necessarily exists.)
Consider $\overline{u}_i:=Au_i$ and $\overline{v}_i:=(A^\top)^{-1}v_i$ for all $i\in\{1,\ldots,n\}$.
Since the coordinates {$(\overline{u}_1)_k,\ldots,(\overline{u}_n)_k$} are distinct for each $k\in\{1,\ldots,d\}$, our treatment of the previous case delivers a polynomial function $\overline{p}$ such that $\nabla\overline{p}(\overline{u}_i)=\overline{v}_i$ for all $i\in\{1,\ldots,n\}$.
Denoting $L\colon x\mapsto Ax$ and $p:=\overline{p}\circ L$, then since the Jacobian of $\overline{p}$ at $x$ is $\nabla \overline{p}(x)^\top$, the chain rule gives
\[
v_i^\top
=\overline{v}_i^\top A
=\nabla \overline{p}(\overline{u}_i)^\top A
=\nabla(\overline{p}\circ L)(u_i)^\top
=\nabla p(u_i)^\top,
\]
that is, $\nabla p(u_i)=v_i$ for every $i\in\{1,\ldots,n\}$, as desired.

\section{Proof of Proposition~\ref{prop.quotient immersion}}

{
First, the finite group $G$ acts properly on $M$ by Proposition~21.5 in~\cite{Lee:13}.
Parts (a) and (b) are given by Theorem~21.10 in~\cite{Lee:13}.
For~(c), Theorem~4.29 in~\cite{Lee:13} gives that $g^\downarrow$ is smooth.
To see it is an immersion, we first apply} the chain rule to $g=g^\downarrow\circ \pi$ at an arbitrary point $x\in M$:
\begin{equation}
\label{eq.chain rule for immersion}
Dg(x) = Dg^\downarrow([x])\circ D\pi(x).
\end{equation}
Since $g$ is an immersion, $Dg(x)$ is injective.
Since $\pi$ is a submersion, $D\pi(x)$ is surjective.
{By part~(a), the domain and codomain of $D\pi(x)$ have the same dimension.
Consequently, $D\pi(x)$ is a bijection.
By~\eqref{eq.chain rule for immersion},
\[
Dg^\downarrow([x])
=Dg(x)\circ(D\pi(x))^{-1}
\]
is injective, and so $g^\downarrow$ is an immersion.
}

{
Next, (d) is given by Proposition~2.32 in~\cite{Lee:18}.
For~(e), let $d_R$ denote the Riemannian distance in $M/G$, that is
\[
d_R([x],[y])
=\inf_\gamma \int_I \|\gamma'(t)\|dt,
\]
where the infimum is over all admissible curves $\gamma\colon I\to M/G$ from $[x]$ to $[y]$.
Next, let $d_Q=d_{M/G}$ denote the quotient metric given by
\[
d_Q([x],[y])
=\inf_{\substack{p\in[x]\\q\in[y]}}d_M(p,q),
\]
where $d_M$ is the Riemannian distance in $M$.
Then
\begin{equation}
\label{eq.quotient riemannian}
d_Q([x],[y])
=\inf_\gamma\int_I\|\gamma'(t)\|dt,
\end{equation}
where the infimum is over all admissible curves in $M$ beginning in the set $[x]$ and ending in the set $[y]$.
Our task is to show that $d_R([x],[y])=d_Q([x],[y])$.
}

{
First, we show that $d_R([x],[y])\leq d_Q([x],[y])$.
To this end, it suffices to show that $\pi\colon (M,d_M)\to(M/G,d_R)$ is $1$-Lipschitz, since this implies
\[
d_R([x],[y])
=d_R(\pi(x),\pi(y))
=d_R(\pi(p),\pi(q))
\leq d_M(p,q)
\]
for every $p\in[x]$ and $q\in[y]$, and the desired bound follows from taking an infimum.
To show that $\pi$ is $1$-Lipschitz, take any admissible curve $\gamma\colon I\to M$ from $x$ to $y$.
Then by Proposition~2.47(c) in~\cite{Lee:18}, $\pi\circ\gamma \colon I\to M/G$ is an admissible curve from $[x]$ to $[y]$ of the same length.
In particular, the length of any such $\gamma$ is an upper bound on $d_R([x],[y])$, and taking an infimum proves the claim.
}

{
Finally, we show that $d_R([x],[y])\geq d_Q([x],[y])$.
Choose $\epsilon>0$ and select an admissible curve $\gamma\colon I\to M/G$ from $[x]$ to $[y]$ of length at most $d_R([x],[y])+\epsilon$.
Then there exists an admissible curve $\gamma^\uparrow\colon I\to M$ such that $\gamma=\pi\circ\gamma^\uparrow$.
(See, for instance, Exercise~3.12.6 in~\cite{Hamilton:17}.)
As above, $\gamma^\uparrow$ has the same length as $\gamma$.
By~\eqref{eq.quotient riemannian}, it follows that
\[
d_Q([x],[y])
\leq d_R([x],[y])+\epsilon.
\]
Since $\epsilon>0$ is arbitrary, the desired bound follows.
}

\section{Proof of Proposition~\ref{prop.embedding is bilipschitz}}

We present an argument from Moishe Kohan~\cite{Kohan:math.se}.
{First, we show that any smooth map $f\colon X\to Y$ is upper Lipschitz.}
Take any $p,q\in X$ and let $c\colon[0,d_X(p,q)]\to X$ denote the unit-speed parameterization of a shortest path from $p$ to $q$.
Then
\begin{align*}
d_Y(f(p),f(q))
&{\leq}\int_0^{d_X(p,q)}\|(f\circ c)'(t)\|dt
=\int_0^{d_X(p,q)}\|(Df)(c(t))c'(t)\|dt\\
&\leq \sup_{x\in X} \|(Df)(x)\|\cdot \int_0^{d_X(p,q)}\|c'(t)\|dt
=\sup_{x\in X} \|(Df)(x)\|\cdot d_X(p,q).
\end{align*}
Furthermore, $\beta:=\sup_{x\in X} \|(Df)(x)\|<\infty$ by compactness.

{To prove lower Lipschitz, we further assume that $f\colon X\to Y$ is an embedding.
To this end, we first} show that $f$ is \textit{locally lower Lipschitz} in the sense that there exist $\alpha,\epsilon>0$ such that $d_Y(f(p),f(q))\geq \alpha d_X(p,q)$ whenever $d_X(p,q)<\epsilon$.
{To accomplish this, we take $Z:=\operatorname{im}(f)$ and show that the inverse map $f^{-1}\colon (Z,d_Y)\to (X,d_X)$ is $\delta$-locally $L$-Lipschitz, in which case we may take $\alpha:=1/L$ and $\epsilon:=\delta/\beta$.
Our approach uses Riemannian tools to analyze arbitrary $u,v\in Z$ with $d_Y(u,v)$ sufficiently small, and so we start by constructing a compact submanifold with boundary $K\subseteq Y$ that contains $Z$ along with all length-minimizing geodesics in $Y$ between sufficiently close points in $Z$.
Since $Z$} is a compact submanifold of $Y$, it has a \textit{normal injectivity radius} $r>0$.
That is, letting $B_r\nu_Z$ denote the members of the normal bundle $\nu_Z$ of $Z$ in $Y$ with norm smaller than $r$, then the restriction of the normal exponential map $\operatorname{exp}_Z\colon\nu_Z\to Y$ to $B_r\nu_Z$ is a diffeomorphism onto its image $N\subseteq Y$, which in turn is an open tubular neighborhood of $Z$.
Invert this diffeomorphism and project onto $Z$ to obtain a smooth retraction ${g}\colon N\to Z$.
Applying $\operatorname{exp}_Z$ to the closure of $B_{r/2}\nu_Z$ gives a compact submanifold with boundary $K\subseteq N$.
{Next, we find $\delta>0$ such that $d_K(u,v)=d_Y(u,v)$ for every $u,v\in Z$ with $d_K(u,v)<\delta$.
By Theorem~6.17 in~\cite{Lee:18}, every $z\in Z$ is contained in an open set $B_z\subseteq K$ that is \textit{geodesically convex}.
This means that for every $u,v\in B_z$, the distance $d_Y(u,v)$ equals the length of the unique shortest path in $Y$ from $u$ to $v$, which in turn is entirely contained in $B_z\subseteq K$, and so $d_K(u,v)=d_Y(u,v)$. 
Considering $\{B_z\}_{z\in Z}$ is an open cover of the compact metric space $(Z,d_K)$, then Lebesgue's number lemma produces $\delta>0$ such that every $u,v\in Z$ with $d_K(u,v)<\delta$ necessary satisfies $u,v\in B_z$ for some $z\in Z$, in which case $d_K(u,v)=d_Y(u,v)$.
The previous argument then gives that the restriction of $f^{-1}\circ g\colon N\to X$ to $Z$ is $\delta$-locally $L$-Lipschitz for some $L<\infty$, and so $f^{-1}$ is $\delta$-locally $L$-Lipschitz, as desired.}

Finally, consider the set $C$ of $(p,q)\in X^2$ for which $d_X(p,q)\geq\epsilon$.
Then compactness gives
\[
R:=\sup_{(p,q)\in C}\frac{d_X(p,q)}{d_Y(f(p),f(q))}
<\infty.
\]
Since $f$ is $\epsilon$-locally $\alpha$-lower Lipschitz, it follows that $f$ is $\min\{\alpha,1/R\}$-lower Lipschitz.

\section{Proof of Proposition~\ref{prop.generic projection of semialgebraic manifold}}

First, we note that $L|_M$ is not injective precisely when there exist $x,y\in M$ with $x\neq y$ such that $L(x-y)=0$.
Similarly, $L|_M$ is not a {smooth} immersion precisely when there exists $x\in M$ and a nonzero $v\in T_xM$ such that $Lv=0$.
Letting $W$ denote the set of all such $x-y$ and $v$, then {by Proposition~4.22 in~\cite{Lee:13},} $L|_M$ is not a {smooth} embedding precisely when there exists $w\in W$ such that $Lw=0$.
Consider the set $P$ of $(L,w)$ for which $Lw=0$ and $w\in W$.
Then $L|_M$ is a {smooth} embedding precisely when $L$ does not reside in the projection $\pi_L(P)$.
{Lemma~2.8} in~\cite{DymG:22} gives
\[
\operatorname{dim}\pi_L(P)
\leq \operatorname{dim}P
\leq \operatorname{dim}W+\sup_{w\in W}\operatorname{dim}\{L:(L,w)\in P\}.
\]
Meanwhile, {$\operatorname{dim}W\leq2k$} and $\operatorname{dim}\{L:Lw=0\}=m(n-1)$ for every $w\neq0$.
Thus, $\operatorname{dim}\pi_L(P)\leq 2k+m(n-1)$, which is strictly less than $mn$ when $m>2k$.
It follows that a generic choice of $L$ does not reside in $\pi_L(P)$.

\section{Proof of Proposition~\ref{prop: C0}}

For (a), choose $\epsilon > 0$.
Given $f \in L^2(G)$ and $u \in H$, let $f \otimes u \in L^2(G;H)$ denote the function defined by $(f \otimes u)(g) = f(g)u$ for $g \in G$.
Such pure tensors span a dense subspace of $L^2(G; H)$ by Theorem~II.10 of~\cite{ReedS:72}.
(Here, we use the separability of $L^2(G)$; see statements~(16.2) and~(16.12) in~\cite{HewittR:63}.)
Find $f_1,\dotsc,f_m \in L^2(G)$ and unit norm $u_1,\dotsc,u_m \in H$ with $\| \varphi - \sum_{j=1}^m f_j \otimes u_j \| {< \frac{\epsilon}{3}}$.
By Proposition~2.41 of~\cite{Folland:95}, there is a neighborhood $V$ of $1$ in $G$ such that ${\| L_g f_j - f_j \|} {< \frac{\epsilon}{3m} }$ whenever $g \in V$.
For such $g$, the triangle inequality implies
\begin{align*}
\| L_g \varphi - \varphi \| 
&\leq \bigg\| L_g \Big(\varphi - \sum_{j=1}^m f_j \otimes u_j \Big) \bigg\| 
+\bigg\| \sum_{j=1}^m (L_g f_j - f_j) \otimes u_j \bigg\|
+ \bigg\| \sum_{j=1}^m f_j \otimes u_j - \varphi \bigg\| \\
&\leq 2 \bigg\| \varphi - \sum_{j=1}^m f_j \otimes u_j \bigg\| + \sum_{j=1}^m \| L_g f_j - f_j \| < \epsilon.
\end{align*}
Then for $g,h \in G$ with $h \in gV$, it holds that $\| L_h \varphi - L_g \varphi \| = \| L_{g^{-1}h} \varphi - \varphi \| {< \epsilon}$.

Part~(b) is trivial when $\varphi = 0$ or $\psi=0$, and so we assume otherwise.
Continuity follows from~(a), since 
\[ 
|\langle L_h \varphi, \psi \rangle - \langle L_g \varphi, \psi \rangle |
\leq \| L_h \varphi - L_g \varphi \| \| \psi \|.
\]
It remains to prove our function vanishes at infinity.

Given a subset $E \subseteq G$, let $\mathbf{1}_E$ denote its indicator function.
We claim that for every $\epsilon > 0$, there is a compact set $K \subseteq G$ such that $\| \mathbf{1}_{K^c} \varphi \| < \epsilon$.
Indeed, the function $g \mapsto \| \varphi(g)\|$ belongs to $L^2(G)$, so we can approximate it within $\epsilon$ by some $f \in C_c(G)$.
Let $K \subseteq G$ be a compact set outside of which $f$ vanishes.
Then
\[
\| \mathbf{1}_{K^c} \varphi \|^2
\leq \int_{K^c} \| \varphi(g) \|^2\, dg + \int_K \big| \| \varphi(g)\| - f(g) \big|^2\, dg
= \int_G \big| \| \varphi(g) \| - f(g) \big|^2\, dg
< \epsilon^2,
\]
as desired.

Choose $\epsilon > 0$.
By the above, there are compact sets $K_1,K_2 \subseteq G$ such that $\| \mathbf{1}_{K_1^c} \varphi\| < \frac{\epsilon}{2\| \psi \|}$ and $\| \mathbf{1}_{K_2^c} \psi \| < \frac{\epsilon}{2 \| \varphi \|}$.
Then
\[ K_2 K_1^{-1} = \{ gh^{-1} : g \in K_2, h \in K_1 \} \subseteq G \]
is compact.
Given $g \not\in K_2 K_1^{-1}$, we show $|\langle L_g \varphi, \psi \rangle| < \epsilon$.
Expand
\begin{align*}
\langle L_g \varphi, \psi \rangle &= 
\big\langle L_g \varphi, \mathbf{1}_{K_2} \psi \big\rangle 
+ \big\langle L_g \varphi, \mathbf{1}_{K_2^c} \psi \rangle \\
&=\big\langle \mathbf{1}_{K_1} \varphi, \mathbf{1}_{g^{-1} K_2} L_{g^{-1}} \psi \big\rangle 
+ \big\langle \mathbf{1}_{K_1^c} \varphi, L_{g^{-1}}( \mathbf{1}_{K_2} \psi ) \big\rangle
+ \big\langle L_g \varphi, \mathbf{1}_{K_2^c} \psi \big\rangle.
\end{align*}
Since $g \not\in K_2K_1^{-1}$, the sets $K_1$ and $g^{-1} K_2$ are disjoint.
Then the first term above vanishes, and the triangle inequality yields
\begin{align*}
|\langle L_g \varphi, \psi \rangle| 
&\leq \big| \big\langle \mathbf{1}_{K_1^c} \varphi, L_{g^{-1}}( \mathbf{1}_{K_2} \psi ) \big\rangle \big| 
+ \big| \big\langle L_g \varphi, \mathbf{1}_{K_2^c} \psi \big\rangle \big| \\
&\leq \| \mathbf{1}_{K_1^c} \varphi \| \| L_{g^{-1}}( \mathbf{1}_{K_2} \psi ) \| + \| L_g \varphi \| \| \mathbf{1}_{K_2^c} \psi \| \\
&= \| \mathbf{1}_{K_1^c} \varphi \| \|  \mathbf{1}_{K_2} \psi \| + \| \varphi \| \| \mathbf{1}_{K_2^c} \psi \|
\leq \| \mathbf{1}_{K_1^c} \varphi \| \|  \psi \| + \| \varphi \| \| \mathbf{1}_{K_2^c} \psi \| < \epsilon,
\end{align*}
as desired.

\end{document}